\definecolor{LinkColor}{rgb}{0,0,1}
\definecolor{LinkColor2}{rgb}{1,0,0}
\definecolor{lbcolor}{rgb}{0.85,0.85,0.85}
\definecolor{FrameColor}{rgb}{0.85,0.85,0.85}
\newcommand{\pskip}{\\[-3mm]}
\newcommand{\bpskip}{\\[-2mm]}
\newtheoremstyle{tstyle}
{15pt}	
{5pt}	
{\itshape}	
{}	
{\bfseries}	
{.}	
{0.5em}	
{}	
\theoremstyle{tstyle}
\newtheorem{theorem}{Theorem}
\newtheorem{lemma}[theorem]{Lemma}
\newtheorem{proposition}[theorem]{Proposition}
\newtheorem{corollary}[theorem]{Corollary}
\newtheorem{definition}[theorem]{Definition}
\newtheorem{remark}[theorem]{Remark}
\newtheoremstyle{cstyle}
{15pt}	
{5pt}	
{}	
{}	
{\bfseries}	
{}	
{0.2222em}	
{}	
\theoremstyle{cstyle}
\g@addto@macro{\thm@space@setup}{\thm@headpunct{}}
\renewenvironment{proof}[1][\proofname]{\par
	\pushQED{\qed}%
	\normalfont \topsep6\p@\@plus6\p@\relax
	\trivlist
	\item[\hskip\labelsep
	\bfseries
	#1\@addpunct{\,}]\ignorespaces
}{%
	\popQED\endtrivlist\@endpefalse
}
\g@addto@macro{\thm@space@setup}{\thm@headpunct{}}
\newenvironment{sketch-proof}[1][Sketch of the proof]{\par
	\pushQED{\qed}%
	\normalfont \topsep6\p@\@plus6\p@\relax
	\trivlist
	\item[\hskip\labelsep
	\bfseries
	#1\@addpunct{\,}]\ignorespaces
}{%
	\popQED\endtrivlist\@endpefalse
}
\newcommand{\RR}{\mathbb R}
\newcommand{\WW}{{\mathcal W}}
\newcommand{\HH}{{\mathcal H}}
\newcommand{\VV}{{\mathcal V}}
\newcommand{\NN}{\mathbb N}
\newcommand{\BB}{{\mathbb B_K}}
\newcommand{\IBB}{\mathring{\mathbb B}_{K}}
\newcommand{\IBBB}{\mathring{\mathbb B}_{2K}}
\newcommand{\MM}{\mathbb M}
\newcommand{\PHI}{\ensuremath\mathrm\Phi}
\newcommand{\eps}{\varepsilon}
\newcommand{\supp}{\textnormal{supp }}
\newcommand{\ee}{\textnormal{e}}
\newcommand{\BR}{{B_R(0)}}
\newcommand{\Br}{{B_r(0)}}
\newcommand{\Bro}{{B_{r_0}(0)}}
\newcommand{\BRZ}{{B_{R_Z}(0)}}
\newcommand{\ddt}{\frac{\mathrm d}{\mathrm dt}}
\newcommand{\ddtau}{\frac{\mathrm d}{\mathrm d\tau}}
\newcommand{\dtau}{\;\mathrm d\tau}
\newcommand{\dz}{\;\mathrm dz}
\newcommand{\dy}{\;\mathrm dy}
\newcommand{\dv}{\;\mathrm dv}
\newcommand{\ds}{\;\mathrm ds}
\newcommand{\dtx}{\;\mathrm d(t,x)}
\newcommand{\dtxv}{\;\mathrm d(t,x,v)}
\newcommand{\dyw}{\;\mathrm d(y,w)}
\newcommand{\B}{{\bar B}}
\newcommand{\delzi}{\partial_{z_i}}
\newcommand{\delzj}{\partial_{z_j}}
\newcommand{\delxi}{\partial_{x_i}}
\newcommand{\delxj}{\partial_{x_j}}
\newcommand{\delvi}{\partial_{v_i}}
\newcommand{\delt}{\partial_{t}}
\newcommand{\delx}{\partial_{x}}
\newcommand{\delv}{\partial_{v}}
\newcommand{\delz}{\partial_{z}}
\newcommand{\stimes}{{\hspace{-0.01cm}\times\hspace{-0.01cm}}}
\newcommand{\scdot}{{\hspace{1pt}\cdot\hspace{1pt}}}
\newcommand{\laplace}{\Delta}
\newcommand{\bigvert}{\;\big\vert\;}
\newcommand{\Bigvert}{\;\Big\vert\;}
\newcommand{\lbr}{\left\{ }
\newcommand{\rbr}{\right\} }
\newcommand{\conf}{\hspace{0.1cm}\widehat{=}\hspace{0.1cm}}
\newcommand{\MA}{\mathbf A}
\newcommand{\MB}{\mathbf B}
\newcommand{\MC}{\mathbf C}
\newcommand{\Ma}{\mathbf a}
\newcommand{\Mb}{\mathbf b}
\newcommand{\Mf}{\mathring{\mathbf f}}
\newcommand{\Mchi}{\boldsymbol \chi}
\newcommand{\tand}{\quad\text{and}\quad}
\newcommand{\twith}{\quad\text{with}\quad}
\newcommand{\tf}{{\tilde f}}
\newcommand{\tg}{{\tilde g}}
\newcommand{\tZ}{{\tilde Z}}
\newcommand{\tB}{{\tilde B}}
\newcommand{\interior}{{\textnormal{int\,}}}
\newcommand{\Lag}{{\mathcal{L}}}
\newcommand{\wto}{\rightharpoonup}
\newcommand{\itema}{\item[\textnormal{(a)}]}
\newcommand{\itemb}{\item[\textnormal{(b)}]}
\newcommand{\Underset}[3][0pt]{\ensuremath{\underset{\raise#1\hbox{\small\ensuremath{#2}}}{#3}}}
\begin{document}

\begin{center}	
	\LARGE{\bfseries Optimal control of a Vlasov-Poisson plasma by an external magnetic field}\\[8mm]
	\normalsize{Patrik Knopf}\\[2mm]
	\textit{University of Regensburg, 93040 Regensburg, Bavaria, Germany}\\[2mm]
	\texttt{Patrik.Knopf@mathematik.uni-regensburg.de}\\[-3mm]
	
	\begin{minipage}[h]{0.275\textwidth}
		\begin{flushright}
			\vspace{-2pt}
			\includegraphics[scale=0.05]{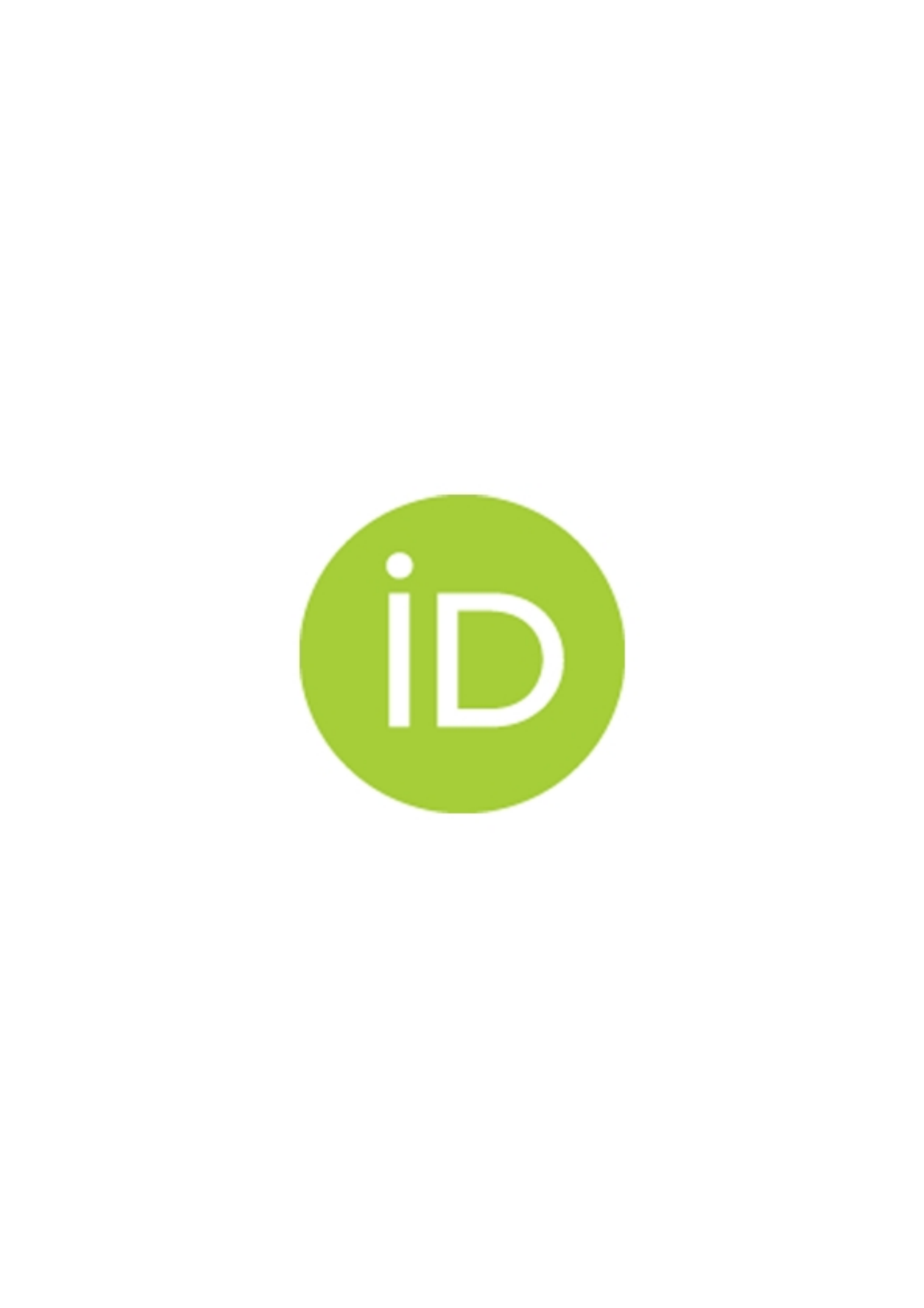} 
		\end{flushright}
	\end{minipage}
	\begin{minipage}[h]{0.5\textwidth}
		\hspace{-12pt}
		\href{https://orcid.org/0000-0003-4115-4885}{orcid.org/0000-0003-4115-4885}
	\end{minipage}

	\bigskip
	\textit{Please cite as:}  P. Knopf, Calc. Var. (2018) 57:134. \\ \url{https://doi.org/10.1007/s00526-018-1407-x}
	\bigskip
\end{center}

\begin{abstract}
	The aim of various technical applications (for example fusion research) is to control a plasma by magnetic fields in a desired fashion. In our model the plasma is described by the Vlasov-Poisson system that is equipped with an external magnetic field. We will prove that this model satisfies some basic properties that are necessary for calculus of variations. After that, we will analyze an optimal control problem with a tracking type cost functional with respect to the following topics: Necessary conditions of first order for local optimality, derivation of an optimality system, sufficient conditions of second order for local optimality, uniqueness of the optimal control under certain conditions.\pskip

	\noindent\textit{Keywords:} Vlasov-Poisson equation, Optimal control with PDE constraints, Nonlinear partial differential equations.\pskip
	
	\noindent\textit{MSC Classification:} 49J20, 35Q83.
\end{abstract}

\section{Introduction}

The three dimensional Vlasov-Poisson system in the plasma physical case is given by the following system of partial differential equations:
\begin{equation}
\label{VP}
\begin{cases}
\partial_t f + v\cdot \partial_x f - \partial_x \psi \cdot \partial_v f = 0,\\[0.25cm]
- \Delta \psi = 4\pi \rho, \quad \lim_{|x|\to\infty} \psi(t,x) = 0,\\[0.25cm]
\rho(t,x) = \int f(t,x,v)\ \mathrm dv.
\end{cases}
\end{equation}
Here $f=f(t,x,v)\ge 0$ denotes the distribution function of the particle ensemble that is a scalar function representing the density in phase space. Its time evolution is described by the first line of \eqref{VP} which is a first order partial differential equation that is referred to as the Vlasov equation. For any measurable set $M\subset \mathbb R^6$,
$\int_M f(t,x,v)\, \mathrm d(x,v)$
yields the charge of the particles that have space coordinates $x\in\mathbb R^3$ and velocity coordinates $v\in\mathbb R^3$ with $(x,v)\in M$ at time $t\ge 0$. The function $\psi$ is the electrostatic potential that is induced by the charge of the particles. It is given by Poisson's equation $-\laplace \psi = 4\pi\rho$ with an homogeneous boundary condition where $\rho$ denotes the volume charge density. The self-consistent electric field is then given by $-\partial_x \psi$. Note that both $\psi$ and $-\partial_x \psi$ depend linearly on $f$. Hence the Vlasov-Poisson system is nonlinear due to the bilinear term $-\partial_x \psi \cdot \partial_v f$ in the Vlasov equation. Assuming $f$ to be sufficiently regular (e.g., $f(t) := f(t,\cdot,\cdot)\in C^1_c (\mathbb R^6)$ for all $t \ge 0$), we can solve Poisson's equation explicitly and obtain
\begin{equation}
\label{PSIF}
\psi_f(t,x) = \iint \frac{f(t,y,w)}{|x-y|} \;\mathrm dw \mathrm dy\ \text{  for } t\ge 0, x\in\mathbb R^3.
\end{equation}
Considering $f\mapsto \psi_f$ as a linear operator we can formally rewrite the Vlasov-Poisson system as
\begin{equation}
\label{VP2}
\partial_t f + v\cdot \partial_x f - \partial_x \psi_f \cdot \partial_v f = 0.
\end{equation}
Combined with the condition
\begin{equation}
\label{IC}
f|_{t=0} = \mathring f
\end{equation}
for some function $\mathring f\in C^1_c(\RR^6)$ we obtain an initial value problem. A first local existence and uniqueness result to this initial value problem was proved by R.~Kurth \cite{kurth}. Later J. Batt \cite{batt} established a continuation criterion which claims that a local solution can be extended as long as its velocity support is under control. Finally, two different proofs for global existence of classical solutions were established independently and almost simultaneously, one by K. Pfaffelmoser \cite{pfaffelmoser} and one by P.-L. Lions and B. Perthame \cite{lions-perthame}. Later, a greatly simplified version of Pfaffelmoser's proof was published by J. Schaeffer \cite{schaeffer}. This means that the follwing result is established: Any nonnegative initial datum \linebreak$\mathring f \in C^1_c(\mathbb R^6)$ launches a global classical solution $f\in C^1([0,\infty[\times\mathbb R^6)$ of the Vlasov-Poisson system \eqref{VP} satisfying the initial condition \eqref{IC}. Moreover, for every time $t\in[0,\infty[$, $f(t)=f(t,\cdot,\cdot)$ is compactly supported in $\mathbb R^6$.
Hence equation \eqref{PSIF} and the reformulation of the Vlasov-Poisson system \eqref{VP2} are well-defined if $\mathring f \in C_c^1(\mathbb R^6)$. For more information we recommend to consider the article \cite{rein} by G. Rein that gives an overview on the most important results.\pskip

To control the distribution function $f$ we will add an external magnetic field $B$ to the Vlasov equation:
\begin{equation}
\label{VPC}
\partial_t f + v\cdot \partial_x f - \partial_x \psi_f \cdot \partial_v f + (v\times B) \cdot \partial_v f = 0, \quad f|_{t=0} = \mathring f\;.
\end{equation}
The cross product $v\times B$ occurs since, unlike the electric field, the magnetic field interacts with the particles via Lorentz force.  If we want to discuss an optimal control problem where the PDE-constraint is given by \eqref{VPC} we must firstly establish the basics for variational calculus:
\begin{itemize}
	\item We will introduce a set $\BB$ such that any field $B\in\BB$ induces a \textit{unique} and \textit{sufficiently regular strong solution} $f=f_B$ of the initial value problem~\eqref{VPC} that exists on any given time interval $[0,T]$. The set $\BB$ will be referred to as the \textit{set of admissible fields}.
	\item We will show that the solution $f_B$ depends Lipschitz-continuously on the field $B$ while the partial derivatives $\delt f_B$, $\delx f_B$ and $\delv f_B$ depend Hölder-continuously on $B$.
	\item We will prove that the operator $B\mapsto f_B$ is \textit{compact/weakly compact} and \textit{Fréchet differentiable} in some suitable sense.
\end{itemize}
With this foundations we can discuss a model problem of optimal control: Let $\mathring f \in C^2_c(\RR^6)$ be any given initial datum and let $T>0$ be any given final time. The aim is to control the distribution function $f_B$ in such a way that its value at time $T$ matches a desired distribution $f_d$ as closely as possible. We will consider the following optimal control problem
\begin{align*}
\text{Minimize}\; J(B) := \frac 1 2 \|f_B(T) - f_d\|_{L^2(\RR^6)}^2 + \frac \lambda 2 \|D_x B\|_{L^2(]0,T[\times\RR^3)}, \;\, \text{s.t.}\; B\in\BB.
\end{align*}
It will be analyzed with respect to the following topics:
\begin{itemize}
	\item Existence of a globally optimal solution,
	\item necessary conditions of first order for locally optimal solutions,
	\item derivation of an optimality system,
	\item sufficient conditions of second order for locally optimal solutions,
	\item uniqueness of the optimal control for small values of $\tfrac T \lambda$.
\end{itemize}

\section{Notation and preliminaries}

Our notation is mostly standard or self-explaining. However, to avoid misunderstandings, we fix some of it here. We will also present some basic results that are necessary for the later approach.\pskip

Let $d\in\NN$, $U\subset \RR^d$ be any open subset, $k\in \NN$ and $1\le p\le \infty$ be arbitrary. $C^k(U)$ denotes the space of $k$ times continuously differentiable functions on $U$, $C_c(U)$ denotes the space of $C^k(U)$-functions having compact support in $U$ and $C^k_b(U)$ denotes the space of $C^k(U)$-functions that are bounded with respect to the norm
\begin{align*}
\|u\|_{C^k_b(U)} := \underset{|\alpha|\le k}{\sup} \; \|D^\alpha u\|_\infty = \underset{|\alpha|\le k}{\sup} \; \underset{x\in U}{\sup}\; |D^\alpha u(x)| , \quad u\in C^k(U).
\end{align*}
For any $\gamma \in ]0,1]$, $C^{k,\gamma}(U)$ denotes the space of Hölder-continuous $C^k(U)$-functions, i.e.,
\begin{align*}
C^{k,\gamma}(U) := \big\{ u\in C^k(U) \bigvert \|u\|_{C^{k,\gamma}(U)} < \infty \big\}
\end{align*}
where for any $u\in C^k(U)$,
\begin{align*}
\|u\|_{C^k_b(U)} := \underset{|\alpha|\le k}{\sup} \big\{ \|D^\alpha u\|_\infty, [D^\alpha u]_\gamma \big\}, \; [D^\alpha u]_\gamma:= \underset{x\neq y}{\sup} \frac{|D^\alpha u(x) - D^\alpha u(y)|}{|x-y|^\gamma}.
\end{align*}
Note that $\big( C^k_b(U), \|\cdot\|_{C^k_b(U)} \big)$ and $\big( C^{k,\gamma}(U), \|\cdot\|_{C^{k,\gamma}(U)} \big)$ are Banach spaces. $L^p(U)$ denotes the standard $L^p$-space on $U$ and $W^{k,p}(U)$ denotes the standard Sobolov space on $U$ as, for instance, defined by E. Lieb and M. Loss in \cite[s.\,2.1,6.7]{lieb-loss}. If $U=\RR^d$ we will sometimes omit the argument "$(\RR^d)$". For example, we will just write $L^p$, $W^{k,p}$ or $C^k_b$ instead of $L^p(\RR^d)$, $W^{k,p}(\RR^d)$ or $C^k_b(\RR^d)$. If $U\neq \RR^d$ we will not use this abbreviation. We will also use Banach space-valued Sobolev spaces as defined by L. C. Evans \cite[p.\,301-305]{evans}. For any Banach space $X$, $L^p(0,T;X)$ denotes the space of Banach space-valued $L^p$-functions $[0,T]\ni t\mapsto u(t) \in X$. Analogously, $W^{k,p}(0,T;X)$ denotes the Banach space-valued Sobolev space. The following properties are essential:
\begin{lemma}
	\label{SOBBOCH}
	Let $T>0$, $d,m \in\NN$, $U\subset \RR^d$ be any open subset and $1\le p,q< \infty$ be arbitrary. Then the following holds:
	\begin{itemize}
		\item[\textnormal{(a)}] For any function $u\in L^p(0,T;W^{k,q}(\RR^d))$ there exists some sequence \linebreak$(u_j)\subset C^\infty(]0,T[\times \RR^d)$ such that
		\begin{gather*}
		\forall j\in\NN\; \exists r_j>0\; \forall t\in[0,T]: \supp u_j(t) \subset B_{r_j}(0)\\
		\text{and}\quad u_j \to u \quad \text{in}\; L^p(0,T;W^{k,q}(\RR^d)).
		\end{gather*}
		\item[\textnormal{(b)}] For any function $u\in W^{k,p}(0,T;L^q(\RR^d))$ there exists some sequence \linebreak$(u_j)\subset C^\infty(]0,T[\times \RR^d)$ such that
		\begin{gather*}
		\forall j\in\NN\; \exists r_j>0\; \forall t\in[0,T]: \supp u_j(t) \subset B_{r_j}(0)\\
		\text{and}\quad u_j \to u \quad \text{in}\; W^{k,p}(0,T;L^q(\RR^d)).
		\end{gather*}
		\item[\textnormal{(c)}] $L^p(]0,T[\times\RR^d)= L^p\big(0,T;L^p(\RR^d)\big)$.
		\item[\textnormal{(d)}] $W^{1,p}(]0,T[\times\RR^d) = W^{1,p}\big(0,T;L^p(\RR^d)\big) \cap L^p\big(0,T;W^{1,p}(\RR^d)\big)$.
	\end{itemize}
\end{lemma}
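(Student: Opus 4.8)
The plan is to dispose of the four assertions in the order (c), (a), (b), (d), since the Fubini identity (c) and the approximation statements (a), (b) are the natural building blocks for the splitting in (d). Throughout, $1\le p,q<\infty$ is used decisively, both for separability of the $L^p$-spaces and for the density of smooth functions.

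For (c), I would first note that $L^q(\RR^d)$ is separable for $q<\infty$, so by the Pettis measurability theorem a map $[0,T]\ni t\mapsto u(t)\in L^p(\RR^d)$ is Bochner measurable precisely when $(t,x)\mapsto u(t,x)$ is jointly measurable. Granting this identification of the two measurability notions, Tonelli's theorem applied to the nonnegative integrand $|u|^p$ gives
\[
\int_{]0,T[\times\RR^d}|u|^p\dtx=\int_0^T\Big(\int_{\RR^d}|u(t,x)|^p\dx\Big)\dt=\int_0^T\|u(t)\|_{L^p(\RR^d)}^p\dt,
\]
so the two norms coincide on the common set of equivalence classes. The only genuinely delicate point here is the measurability equivalence, which reduces entirely to Pettis' theorem together with separability.

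For (a) the scheme is \emph{truncate, then mollify}. Fix $\chi\in C_c^\infty(\RR^d)$ with $\chi\equiv 1$ on $B_1(0)$ and put $\chi_R(x):=\chi(x/R)$. By the Leibniz rule $D^\alpha(\chi_R u)-\chi_R D^\alpha u$ is a finite sum of terms carrying a factor $D^\beta\chi_R=R^{-|\beta|}(D^\beta\chi)(\cdot/R)$ with $|\beta|\ge 1$, hence of order $O(R^{-1})$ in the relevant norms; together with dominated convergence, using the dominating function $C\|u(t)\|_{W^{k,q}(\RR^d)}\in L^p(0,T)$, this yields $\chi_R u\to u$ in $L^p(0,T;W^{k,q}(\RR^d))$, and each $\chi_R u$ has compact spatial support uniformly in $t$. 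I would then extend $\chi_R u$ by zero past $[0,T]$ and mollify jointly in $(t,x)$ with a product mollifier of radius $\eta$: standard mollification estimates give convergence as $\eta\to 0$, the mollified functions lie in $C^\infty(]0,T[\times\RR^d)$, and their spatial support grows by at most $\eta$ and thus stays compact. A diagonal choice of $R$ and $\eta$ produces the required sequence. Part (b) is the same argument with the roles of the two variables exchanged; crucially the spatial cutoff commutes with the time derivatives, so truncation is free in the $W^{k,p}(0,T;\,\cdot\,)$ direction, and the mollification step is identical.

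For (d) both inclusions rest on relating the scalar distributional derivatives on the product to the vector-valued weak derivatives, which I would carry out by testing against tensor products $\varphi(t)\psi(x)$ with $\varphi\in C_c^\infty(]0,T[)$ and $\psi\in C_c^\infty(\RR^d)$, exploiting that finite linear combinations of such functions are sequentially dense in $C_c^\infty(]0,T[\times\RR^d)$ in the test-function topology. For ``$\subseteq$'', given $u\in W^{1,p}(]0,T[\times\RR^d)$, part (c) already places $u,\partial_t u,\partial_{x_i}u$ in $L^p(0,T;L^p(\RR^d))$; testing the definition of $\partial_t u$ against $\varphi\otimes\psi$ and applying Fubini for every fixed $\psi$ identifies $\partial_t u$ as the Bochner weak time derivative, so $u\in W^{1,p}(0,T;L^p(\RR^d))$, while the analogous computation for $\partial_{x_i}u$ shows that almost every slice $u(t)$ has weak spatial gradient, giving $u\in L^p(0,T;W^{1,p}(\RR^d))$. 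The reverse inclusion runs the same computation backwards, the tensor-product density upgrading the identities verified on $\varphi\otimes\psi$ to the full distributional identity on the product. I expect the real work to sit in (d), not in the routine Fubini and mollification arguments: the delicate step is the passage between the scalar distributional derivative and the Bochner derivative, keeping track of the null set of bad time slices. An arguably cleaner alternative, which I would keep in reserve, is to bypass this bookkeeping by approximating $u$ with the smooth compactly supported $u_j$ from (a) and (b) — for which all derivative notions trivially agree — and then passing to the limit in each of the three norms.
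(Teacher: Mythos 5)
First, a point of comparison: the paper does not actually prove this lemma at all --- it explicitly defers to the literature (Yosida, Pettis, Kreuter) --- so your proposal has to be judged on its own merits. Your treatment of (c), (a) and (d) is correct in outline: (c) is the standard Pettis-plus-Tonelli argument (only a typo: what matters is separability of the target space $L^p(\RR^d)$, not ``$L^q$''), (a) is the usual truncate-then-mollify scheme with the Leibniz error of order $R^{-1}$, and for (d) the tensor-product testing argument, together with the countable-family bookkeeping for the exceptional time slices that you explicitly flag, is exactly the standard route between scalar distributional derivatives and Bochner weak derivatives.

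Part (b), however, contains a genuine gap, and it sits precisely in the step you declare to be ``identical'' to (a): extending by zero in time and mollifying. In (a) this is harmless because the norm of $L^p(0,T;W^{k,q})$ involves no time derivatives. In (b) it fails. A function $u\in W^{1,p}(0,T;L^q)$ has a continuous representative $[0,T]\to L^q(\RR^d)$ whose boundary values $u(0),u(T)$ are in general nonzero, so the distributional time derivative of the zero extension $\bar u$ is $\overline{\partial_t u}+u(0)\,\delta_0-u(T)\,\delta_T$. Hence, for a time mollifier $\rho_\eta$,
\begin{align*}
\partial_t(\bar u\ast\rho_\eta)(t)=\big(\overline{\partial_t u}\ast\rho_\eta\big)(t)+u(0)\,\rho_\eta(t)-u(T)\,\rho_\eta(t-T),
\end{align*}
and the $L^p(0,T;L^q)$-norm of the boundary term $u(0)\,\rho_\eta(\cdot)$ is of order $\eta^{-1+1/p}\|u(0)\|_{L^q}$, which blows up as $\eta\to 0$ for $p>1$ and does not tend to zero even for $p=1$. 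So your approximating sequence does not converge in $W^{k,p}(0,T;L^q)$ unless the boundary values (and, for $k\ge 2$, those of $\partial_t^j u$ up to order $k-1$) vanish. This is the classical distinction between interior approximation and approximation up to the boundary: what (b) requires is the analogue of density of $C^\infty(\overline\Omega)$-functions in $W^{k,p}(\Omega)$, not a Meyers--Serrin statement. The standard repairs are either to replace the zero extension by a genuine $W^{k,p}$-extension in time (even reflection for $k=1$, Hestenes-type higher-order reflection for general $k$) before mollifying, or to exploit that $]0,T[$ is star-shaped and first dilate in time, $u_\theta(t):=u\big(\tfrac T2+\tfrac{t-T/2}{1+\theta}\big)$, and then mollify. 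With either correction the rest of your argument (spatial cutoff commuting with time derivatives, joint mollification, diagonal choice of parameters) goes through, and your reserve strategy for (d) also becomes legitimate.
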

\smallskip
As these results are very common, we will not give a proof of this lemma in this paper. For more information on this topic we recommend to consider \cite{yosida}, \cite{pettis} and \cite{kreuter}.\\[-2mm]

In order to write down the three dimensional Vlasov-Poisson system concisely we will first define some operators and notations: For $d\in\NN$, $1\le p\le \infty$ and $r>0$ let $L^p_r(\RR^d)$ denote the set of functions $ \varphi\in L^p(\RR^d)$ having compact support $\supp \varphi\subset B_r(0)\subset\RR^d$. Then the operator
\begin{align}
\rho.\colon L^2_r(\RR^6) \to L^2(\RR^3),\;  \varphi \mapsto \rho_\varphi\quad\text{with}\quad \rho_\varphi(x):=\int  \varphi(x,v) \dv,\; x\in\RR^3
\end{align}
is linear and bounded. It also holds that $\rho_\varphi\in L^2_r(\RR^3)$ for any $ \varphi\in L^2_r(\RR^6)$. Let now $R>0$ be any arbitrary radius. From the Calderon-Zygmund inequality \linebreak\cite[p.\,230]{gilbarg-trudinger} we can conclude that
\begin{align}
\label{DEFPSI}
\psi.\colon L^2_r(\RR^6) \to H^2\big(\BR\big),\;  \varphi \mapsto \psi_\varphi \quad\text{with}\quad \psi_\varphi(x):= \int \frac{\rho_\varphi(y)}{|x-y|}\dy
\end{align}
is a linear and bounded operator. According to E. Lieb and M. Loss \cite[s.\,6.21]{lieb-loss}, the gradient of $\psi_\varphi$ is given by
\begin{align*}
\delx\psi_\varphi(x)= -\int \frac{x-y}{|x-y|^3}\;\rho_\varphi(y)\dy, \quad x\in \RR^3
\end{align*}
and then, because of \eqref{DEFPSI}, the operator
\begin{align}
\begin{aligned}
\delx\psi.\colon L^2_r(\RR^6) \to H^1\big(\BR;\RR^3\big),\;  \varphi \mapsto \delx\psi_\varphi \\
\end{aligned}
\end{align}
is also linear and bounded. Some more properties of the potential $\psi_\varphi$ and its field $\delx \psi_\varphi$ are given by the following lemma.

\begin{lemma}
	\label{NPOT}
	Let $\varphi \in L^2_r(\RR^6)$ be arbitrary. 
	\begin{itemize}
		\item[\textnormal{(a)}] $\psi_\varphi\in H^2_{loc}(\RR^3)$ has a continuous representative and is the unique solution of the boundary value problem
		\begin{align*}
		-\laplace \psi_\varphi = 4\pi\rho_\varphi \;\;\text{a.e.\,on}\; \RR^3 ,\quad \underset{|x|\to\infty}{\lim} \psi_\varphi = 0.
		\end{align*}
		\item[\textnormal{(b)}] Let $1<p,q<\infty$ be any real numbers and suppose that additionally $\varphi\in L^p(\RR^6)$. Then ${\rho_\varphi \in L^p(\RR^3)}$ and there exists some constant $C>0$ depending only on $p$ and $r$ such that \begin{align*}
		\|\psi_\varphi\|_{L^p} &\le C\,\|\varphi\|_{L^q}, && \text{where}\quad \tfrac 1 q = \tfrac 2 3 +\tfrac 1 p\,, \text{ i.e.},\; q=\tfrac{3p}{2p+3}	\\
		\|\delx\psi_\varphi\|_{L^p} &\le C\,\|\varphi\|_{L^q}, && \text{where}\quad \tfrac 1 q = \tfrac 1 3 +\tfrac 1 p\,, \text{ i.e.},\; q=\tfrac{3p}{p+3}\\
		\|D_x^2 \psi_\varphi\|_{L^p} &\le C\,\|\varphi\|_{L^q } , && \text{where}\quad \tfrac 1 q = \tfrac 0 3 +\tfrac 1 p\,, \text{ i.e.},\; q=p .
		\end{align*}
		\item[\textnormal{(c)}] Suppose that additionally $\varphi\in L^\infty(\RR^6)$. Then $\rho_\varphi\in L^\infty(\RR^3)$ and there exists some constant $C>0$ depending only on $r$ such that $$\|\delx \psi\|_{L^\infty} \le C\,\|f\|_{L^\infty}.$$
		Moreover $\delx\psi_\varphi \in C^{0,\gamma}(\RR^3;\RR^3)$ for any $\gamma\in ]0,1[$ and there exists some constant $C>0$ depending only on $r$ such that 
		$$ |\delx\psi_\varphi(x) - \delx\psi_\varphi(y)| \le C\,\|\varphi\|_{L^\infty}\,|x-y|^\gamma, \quad x,y\in\RR^3. $$
	\end{itemize}
\end{lemma}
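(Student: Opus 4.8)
These three claims are the classical mapping and regularity properties of the Newtonian potential, so the plan is to reduce each of them to a standard tool --- Sobolev embedding and Liouville's theorem for (a), the Hardy--Littlewood--Sobolev and Calderón--Zygmund inequalities for (b), and an explicit kernel splitting for (c) --- while using the compact support $\supp\varphi\subset\Br$ to generate the $r$-dependent constants. The one observation I would isolate at the outset is that fibrewise integration $\varphi\mapsto\rho_\varphi$ sends $L^s_r(\RR^6)$ into $L^s_r(\RR^3)$ with $\|\rho_\varphi\|_{L^s}\le C(r,s)\,\|\varphi\|_{L^s}$ for every $1\le s\le\infty$: for fixed $x$ the $v$-integration runs over a set of measure at most $C(r)$, so Hölder in $v$ and then integration in $x$ give the bound. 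For (a), membership $\psi_\varphi\in H^2_{loc}(\RR^3)$ was already recorded before the lemma, and since $2\cdot 2>3$ the Sobolev embedding $H^2_{loc}(\RR^3)\hookrightarrow C^0(\RR^3)$ supplies the continuous representative; that $\psi_\varphi=\tfrac1{|\cdot|}*\rho_\varphi=4\pi(-\laplace)^{-1}\rho_\varphi$ solves $-\laplace\psi_\varphi=4\pi\rho_\varphi$ is the fundamental-solution property of the kernel, holding a.e.\ because both sides lie in $L^2_{loc}$. The decay $\psi_\varphi\to0$ follows from $|\psi_\varphi(x)|\le\|\rho_\varphi\|_{L^1}/(|x|-r)$ for $|x|>r$, and uniqueness is Liouville: the difference of two solutions is, by Weyl's lemma, a bounded harmonic function vanishing at infinity, hence identically zero.

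For (b) I would combine the $\rho$-estimate above with two singular-integral results. Since $\psi_\varphi=\tfrac1{|\cdot|}*\rho_\varphi$ and $|z|^{-1}$ is the Riesz kernel of order $2$ in dimension $3$, the Hardy--Littlewood--Sobolev inequality yields $\|\psi_\varphi\|_{L^p}\le C\,\|\rho_\varphi\|_{L^q}$ with $\tfrac1q=\tfrac1p+\tfrac23$; likewise $|\delx\psi_\varphi(x)|\le\int|x-y|^{-2}|\rho_\varphi(y)|\dy$ is dominated by the Riesz potential of order $1$, giving the exponent relation $\tfrac1q=\tfrac1p+\tfrac13$. The Hessian bound is exactly the Calderón--Zygmund inequality already cited in the paper, so $\|D_x^2\psi_\varphi\|_{L^p}\le C\,\|\rho_\varphi\|_{L^p}$, i.e.\ $q=p$. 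In each line the exponents satisfy $q\le p$, so $\varphi\in L^p$ forces $\varphi\in L^q$ on its compact support, and I close the estimate with $\|\rho_\varphi\|_{L^q}\le C(r)\,\|\varphi\|_{L^q}$.

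The $L^\infty$ bounds in (c) are immediate from the support: $|\rho_\varphi(x)|\le\|\varphi\|_\infty\,|\Br|$, and for the field $|\delx\psi_\varphi(x)|\le\int|x-y|^{-2}|\rho_\varphi(y)|\dy$ splits at a fixed radius $R$ into a near part $\le C R\,\|\rho_\varphi\|_\infty$ and a far part $\le R^{-2}\|\rho_\varphi\|_{L^1}$; since $\|\rho_\varphi\|_{L^1}\le|\Br|\,\|\rho_\varphi\|_\infty$, this gives $\|\delx\psi_\varphi\|_{L^\infty}\le C(r)\,\|\varphi\|_\infty$.

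The genuinely technical point, and the step I expect to be the main obstacle, is the Hölder estimate for $\delx\psi_\varphi$. Writing $K(z)=z/|z|^3$ and $\delta=|x_1-x_2|$, I would split $\int|K(x_1-y)-K(x_2-y)|\,|\rho_\varphi(y)|\dy$ at the near zone $\{\,|x_1-y|<2\delta\,\}$. On the near zone each kernel is handled separately via $\int_{|z|<c\delta}|z|^{-2}\dz\le C\delta$, contributing $\le C\delta\,\|\rho_\varphi\|_\infty$; on its complement the mean value theorem gives $|K(x_1-y)-K(x_2-y)|\le C\delta\,|x_1-y|^{-3}$, and because $\rho_\varphi$ is supported in $\Br$ the remaining integral $\int_{2\delta\le|x_1-y|\le 2r}|x_1-y|^{-3}\dy\le C\log(r/\delta)$ contributes $\le C\delta\log(r/\delta)\,\|\rho_\varphi\|_\infty$. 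The resulting bound is log-Lipschitz, and the elementary inequality $\delta\log(1/\delta)\le C_\gamma\,\delta^\gamma$ (for $\delta$ bounded, as it is on the compact support) upgrades it to $C\,\|\varphi\|_\infty\,|x_1-x_2|^\gamma$ for every $\gamma\in]0,1[$, which simultaneously yields $\delx\psi_\varphi\in C^{0,\gamma}(\RR^3;\RR^3)$. Throughout, the main care is bookkeeping that every constant depends only on $r$ --- via the support radius and $\|\rho_\varphi\|_\infty\le C(r)\,\|\varphi\|_\infty$ --- and never on $\varphi$ itself.
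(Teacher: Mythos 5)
Your proposal is correct, and for parts (a) and (b) it follows essentially the same route as the paper: Calder\'on--Zygmund plus Sobolev embedding and Weyl's lemma for (a), and the Riesz-potential (Hardy--Littlewood--Sobolev) estimates combined with the bound $\|\rho_\varphi\|_{L^q}\le C(r)\|\varphi\|_{L^q}$ for (b) --- the only cosmetic difference being that you invoke the global Calder\'on--Zygmund inequality for the Hessian, whereas the paper applies it locally on $B_{2r}(0)$ and controls the exterior by the pointwise decay $|D^2\psi_\varphi(x)|\le C|x|^{-3}\|\varphi\|_{L^p}$. The genuine difference is in (c): the paper simply cites Lieb--Loss (Theorem 10.2) for the H\"older continuity of $\delx\psi_\varphi$, while you prove it from scratch by the standard kernel splitting at $\{|x_1-y|<2\delta\}$, the mean value theorem on the far zone, and the log-Lipschitz-to-H\"older upgrade $\delta\log(1/\delta)\le C_\gamma\delta^\gamma$. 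This makes the lemma self-contained, at the cost of two small bookkeeping patches you should make explicit: the far-zone annulus is contained in $\{2\delta\le|x_1-y|\le 4r\}$ only when $|x_1|\le 3r$ (when $|x_1|>3r$ the integrand is $\le(2r)^{-3}$ on $\Br$ and the integral is bounded by a constant, so no logarithm appears), and the H\"older bound for $\delta=|x_1-x_2|\gtrsim r$ does not come from the logarithmic estimate but trivially from $|\delx\psi_\varphi(x_1)-\delx\psi_\varphi(x_2)|\le 2\|\delx\psi_\varphi\|_{L^\infty}\le C\|\varphi\|_{L^\infty}\le C\|\varphi\|_{L^\infty}\,\delta^\gamma\,r^{-\gamma}$, rather than from any boundedness of $\delta$ (which indeed fails, since $x_1,x_2$ range over all of $\RR^3$).
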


\begin{proof}
	\textit{Item} (a): The Calderon-Zygmund lemma (\cite[p.\,230]{gilbarg-trudinger}) states that $\psi_\varphi$ is in $H^2_{loc}(\RR^3)$ and satisfies $	-\laplace \psi_\varphi = 4\pi\rho_\varphi$ almost everywhere on $\RR^3$. By Sobolev's inequality, $\psi_\varphi$ has a continuous representative and one can easily show that $\psi_\varphi(x) \to 0$ if $|x|\to \infty$, i.e., $\psi_\varphi$ satisfies the boundary value problem. Any other solution is then given by $\psi_\varphi + h$ where $\laplace h = 0$ almost everywhere and $h$ satisfies the boundary condition. Then, by Weyl's lemma, $h$ is a harmonic function and thus $h=0$ which means uniqueness. This proves (a). 
	
	\textit{Item} (b): $\rho_\varphi\in L^p(\RR^6)$ with $\|\rho_\varphi\|_{L^p} \le C\, \|\varphi\|_{L^p}$ is a direct consequence of Jensen's inequality. The first two inequalities are already established by E.~Stein~\cite[p.\,119]{stein}. Note that the Calderon-Zygmund inequality also yields $\|D^2 \psi_\varphi \|_{L^p(B_{2r}(0))} \le C\, \|\varphi\|_{L^p}$. Moreover, if $|x|\ge 2r$,
	\begin{align*}
	|\delxi \delxj \psi_\varphi(x)| \le \int\limits_{|y|<r} \frac{C}{|x-y|^3}\, |\rho_\varphi(y)| \dy \le C\, |x|^{-3}\, \|\rho_\varphi\|_{L^1} \le C\, |x|^{-3}\, \|\varphi\|_{L^p}\,.
	\end{align*}
	Thus $\|D^2 \psi_\varphi \|_{L^p(\RR^3\setminus B_{2r}(0))} \le C\, \|\varphi\|_{L^p}$ which completes the proof of (b).\\[-2mm]
	
	\textit{Item} (c): $\rho_\varphi \in L^\infty(\RR^6)$ is obvious. It holds that
	\begin{align*}
	|\delx\psi_\varphi(x)| &\le \|\rho_\varphi\|_{L^\infty} \int\limits_{|y|<r} |x-y|^{-2} \dy \le C\,\|\varphi\|_{L^\infty} \int\limits_{|y|<3r} |y|^{-2} \dy, && |x|\le 2r,\\
	|\delx\psi_\varphi(x)| &\le \int\limits_{|y|<r} \frac{|\rho_\varphi(y)|}{|x-y|^2} \dy \le C\, |x|^{-2}\, \|\rho_\varphi\|_{L^1} \le C\, |x|^{-2}\, \|\varphi\|_{L^\infty}, && |x|> 2r
	\end{align*}
	and then $\|\delx\psi_\varphi\|_{L^\infty} \le C\,\|\varphi\|_{L^\infty}$ immediately follows. The second assertion is established by E. Lieb and M. Loss \cite[s.\,10.2]{lieb-loss}. 
\qed\end{proof}

\smallskip

We will also use the notation $\rho_f$, $\psi_f$ and $\delx\psi_f$ for functions $f=f(t,x,v)$ with $t\ge 0$, $x,v\in\RR^3$. In this case we will write
\begin{align*}
\rho_f(t,x) = \rho_{f(t)}(x),\qquad
\psi_f(t,x) = \psi_{f(t)}(x),\qquad
\delx\psi_f(t,x) = \delx\psi_{f(t)}(x)
\end{align*}
for any $t$ and $x$. As already mentioned in the introduction we consider the following initial value problem:\\
\begin{equation}
\label{VPSU}
\begin{cases}
\partial_t f + v\cdot \partial_x f - \partial_x \psi_f \cdot \partial_v f + (v\times B) \cdot \partial_v f= 0 & \text{ on }[0,T]\times \RR^6\;,\\[0.25cm]
f|_{t=0} = \mathring f & \text{ on }\RR^6\,.
\end{cases}
\end{equation}
In the following let $T>0$ and $\mathring f\in C^2_c(\RR^6;\RR_0^+)$ be arbitrary but fixed. Let $B=B(t,x)$ be a given external magnetic field and let $f=f(t,x,v)$ denote the distribution function that is supposed to be controlled. Its electric field $\delx\psi_f=\delx\psi_f(t,x)$ is formally defined as described above. In the following we will show that the solution $f$ satisfies the required condition "$f(t)\in L^2_r(\RR^6)$" that ensures that $\rho_f$, $\psi_f$ and $\delx\psi_f$ are well-defined. Of course this is possible only if the magnetic field $B$ is regular enough. The regularity of those fields will be specified in the following section.

\section{Admissible fields and the field-state operator}

\subsection{The set of admissible fields}

We will now introduce the set our magnetic fields will belong to: The set of admissible fields. For $T>0$ and $\beta>3$ let $\WW=\WW(\beta)$ denote the reflexive Banach space $L^2\big(0,T;W^{2,\beta}(\RR^3;\RR^3)\big)$, let $\HH$ denote the Hilbert space $L^2\big(0,T;H^1(\RR^3;\RR^3)\big)$ and let $\|\cdot\|_\WW$ and $\|\cdot\|_\HH$ denote their standard norms. Then $\VV:=\WW \cap \HH$ with $\|\cdot\|_\VV := \|\cdot\|_\WW + \|\cdot\|_\HH$ is also a Banach space.

%
%

\begin{definition}
	\label{DAC}
	Let $K>0$ and $3<\beta<\infty$ be arbitrary fixed constants. Then
	\begin{align*}
	\BB := \lbr B\in \VV \Bigvert \|B\|_\VV \le K \rbr
	\end{align*}
	is called the \textbf{set of admissible fields}.
\end{definition}

\begin{remark}
	In the approach of Section 3 and 4 it would be sufficient to consider fields $B\in\WW$ with $\|B\|_\WW \le K$. However, in the model that is discussed in Sections 5-7 the regularity $B\in\VV$ will be necessary, especially for Fréchet differentiability of the cost functional. Therefore we will use this condition right from the beginning.
\end{remark}

The most important properties of the set of admissible fields are listed in the following lemma.

%
%

\begin{lemma}
	\label{LAC}
	The set of admissible fields $\BB$ has the following properties:
	\begin{itemize}
		\item[\textnormal{(a)}]
		$\BB$ is a bounded, convex and closed subset of $\VV$.
		\item[\textnormal{(b)}]
		The space $W^{j,\beta}(\RR^3;\RR^3)$ is continuously embedded in $C^{j-1,\gamma}(\RR^3;\RR^3)$ for $j\in\NN$ and $\gamma=\gamma(\beta)=1-\tfrac{3}{\beta}$. Thus there exist constants $k_0,k_1>0$ depending only on $\beta$ such that for all $B\in\BB$,
		$$ \|B(t)\|_{C^{0,\gamma}} \le k_0\; \|B(t)\|_{W^{1,\beta}}, \quad \|B(t)\|_{C^{1,\gamma}} \le k_1\; \|B(t)\|_{W^{2,\beta}}$$
		for almost all $t\in[0,T]$. Moreover for any $r>0$ there exist constants $k_2,k_3>0$ depending only on $\beta$ and $r$ such that for all $B\in\BB$,
		\begin{align*}
		\|B(t)\|_{C^{0,\gamma}(B_r(0))} &\le k_2\; \|B(t)\|_{W^{1,\beta}(B_r(0))}, \\ \|B(t)\|_{C^{1,\gamma}(B_r(0))} &\le k_3\; \|B(t)\|_{W^{2,\beta}(B_r(0))}
		\end{align*}
		for almost all $t\in[0,T]$. 	
		\item[\textnormal{(c)}] The space $\WW$ is continuously embedded in $L^2(0,T;C^{1,\gamma})$. Thus for all \linebreak $B\in\BB$ it holds that $B\in L^2(0,T;C^{1,\gamma})$ with $ \|B\|_{L^2(0,T;C^{1,\gamma})} \le k_1 K \;. $
		\item[\textnormal{(d)}]
		Let $\MM$ denote the set
		\begin{align*}
		\left\{ B \in C^\infty([0,T]\times\RR^3;\RR^3)) \left|
		\begin{aligned}
		&\|B\|_\WW \le 2 K \;\text{and}\;\; \exists m>0\; \forall \in[0,T] : \\
		&  \supp B(t) \subset B_m(0)\subset \RR^3 
		\end{aligned}
		\right. \right\}.
		\end{align*}
		Then for any $B\in \BB$, there exists a sequence $(B_k)_{k\in\NN}\subset\MM$ such that
		$$\|B-B_k\|_\WW\to 0,\quad k\to\infty\;.$$
		\item[\textnormal{(e)}]
		$\BB\subset \VV$ is weakly compact, i.e., any sequence in $\BB$ contains a subsequence converging weakly in $\VV$ to some limit in $\BB$.
	\end{itemize}
\end{lemma}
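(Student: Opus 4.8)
The plan is to settle (a)--(d) by direct arguments together with the cited results, and to concentrate the real work on the weak compactness in (e). For (a), boundedness is immediate from the definition, while convexity and closedness hold because $\BB$ is exactly the closed ball of radius $K$ in the normed space $\VV$: convexity follows from the triangle inequality for $\|\cdot\|_\VV$ and closedness from continuity of the norm. For (b), I would invoke Morrey's embedding theorem; since the space dimension is $3$ and $\beta>3$, one has $W^{1,\beta}(\RR^3)\hookrightarrow C^{0,\gamma}$ with $\gamma=1-\tfrac 3\beta$, and applying this to the first-order derivatives gives $W^{2,\beta}\hookrightarrow C^{1,\gamma}$. The constants $k_0,k_1$ depend only on $\beta$, and the analogous estimates on a ball $B_r(0)$ produce constants $k_2,k_3$ depending on $\beta$ and $r$; since $B(t)\in W^{2,\beta}$ for almost every $t$, the stated inequalities hold pointwise in $t$. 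For (c), I would integrate the pointwise bound from (b): $\|B\|_{L^2(0,T;C^{1,\gamma})}^2=\int_0^T\|B(t)\|_{C^{1,\gamma}}^2\dt\le k_1^2\int_0^T\|B(t)\|_{W^{2,\beta}}^2\dt=k_1^2\|B\|_\WW^2$, so that $\|B\|_{L^2(0,T;C^{1,\gamma})}\le k_1 K$ on $\BB$.

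For (d), I would apply Lemma \ref{SOBBOCH}(a) componentwise with $p=2$, $q=\beta$, $k=2$ and $d=3$ to obtain a sequence $(\tilde B_k)\subset C^\infty(]0,T[\times\RR^3;\RR^3)$ with uniformly compact spatial support converging to $B$ in $\WW$. Since $\|\tilde B_k\|_\WW\to\|B\|_\WW\le K<2K$, all but finitely many $\tilde B_k$ satisfy $\|\tilde B_k\|_\WW\le 2K$ and therefore lie in $\MM$; discarding the initial terms yields the required approximating sequence.

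The heart of the lemma is (e), for which my plan rests on two facts. First, $\VV=\WW\cap\HH$ is reflexive: $\WW$ is reflexive by hypothesis and $\HH$ is reflexive as a Hilbert space, so the product $\WW\times\HH$ is reflexive, and the diagonal map $\VV\to\WW\times\HH$, $B\mapsto(B,B)$, is an isometry onto a closed subspace, which is therefore reflexive as well. Second, by (a) the set $\BB$ is bounded, convex and norm-closed, hence weakly closed by Mazur's theorem. In the reflexive space $\VV$ every bounded sequence has a weakly convergent subsequence (Eberlein--\v{S}mulian), and since $\BB$ is weakly closed its limit again lies in $\BB$; this is precisely the claimed weak compactness.

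The main obstacle is making the reflexivity step rigorous: one has to check that $\WW$ and $\HH$ are continuously embedded in a common Hausdorff space, for instance $L^2(0,T;\mathcal D'(\RR^3))$, so that the diagonal is well-defined and genuinely closed in $\WW\times\HH$. If one prefers to avoid abstract reflexivity, an equivalent route is to extract from a given sequence in $\BB$ first a subsequence converging weakly in $\WW$ and then a further subsequence converging weakly in $\HH$, to observe that the two weak limits coincide as distributions, and finally to upgrade this to weak convergence in $\VV$ via the identification $(\WW\cap\HH)^*=\WW^*+\HH^*$ from the duality of intersections and sums of Banach spaces; justifying that identity is the delicate point on this alternative path.
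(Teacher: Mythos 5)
Your proposal is correct, and parts (a)--(d) coincide with the paper's (very terse) proof: (a) is the closed ball, (b) is Sobolev/Morrey embedding, (c) follows by integrating the $k_1$-bound in time, and (d) is exactly Lemma \ref{SOBBOCH}(a) plus the observation that $\|B_k\|_\WW\to\|B\|_\WW\le K$ forces all but finitely many approximants into $\MM$. Where you genuinely diverge is (e). The paper argues concretely: it extracts a subsequence converging weakly in $\WW$ (Banach--Alaoglu, using reflexivity of $\WW$), then a further subsequence converging weakly in $\HH$, identifies the two limits ``by uniqueness,'' asserts that the subsequence then converges weakly in $\VV$, and concludes $\|B\|_\VV\le K$ from weak lower semicontinuity of the norm. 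This is precisely your ``alternative route,'' and you correctly put your finger on its delicate step: passing from weak convergence in $\WW$ and in $\HH$ separately to weak convergence in $\VV=\WW\cap\HH$ requires the duality identification $(\WW\cap\HH)^*=\WW^*+\HH^*$ (valid here because the spaces form a compatible couple with $\VV$ dense in each, but nowhere justified in the paper). Your primary route -- proving $\VV$ reflexive via the isometric diagonal embedding into $\WW\times\HH$ (closed because both spaces embed into a common Hausdorff space such as $L^1_{loc}(]0,T[\times\RR^3)$), then applying Eberlein--\v{S}mulian for the subsequence and Mazur's theorem for weak closedness of the convex, norm-closed set $\BB$ -- sidesteps that duality question entirely and is the more self-contained argument; what it costs is the small extra verification of closedness of the diagonal, which is immediate since weak and strong limits agree as locally integrable functions. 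Either route is sound; yours repairs the one step the paper leaves implicit.
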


\begin{proof} 
	(a) is obvious and (b) is a direct consequence of Sobolev's embedding theorem and the fact that for all $B\in\BB$, $B(t)\in W^{2,\beta}(\RR^3;\RR^3)$ for almost all $t\in[0,T]$. Then the $k_1$-inequality of (b) immediately implies (c). (d) follows instantly from Lemma \ref{SOBBOCH}(a). Without loss of generality, we can assume that $\|B\|_\WW\le 2K$. As $\BB$ is a bounded subset of $\WW$ the Banach-Alaoglu theorem implies that any sequence ${(B_k)\subset\BB}$ contains a subsequence $(B_k^*)$ converging weakly to some limit $B\in \WW$. Now, $(B_k^*)$ is a bounded sequence in $\HH$ and thus it has a subsequence $(B_k^{**})$ that converges weakly to some limit in $\HH$. Because of uniqueness, this limit must be $B$. Hence $B^{**}_k\wto B$ in $\VV$ and since the norm $\|\cdot\|_\VV$ is weakly lower semicountinuous it follows that $\|B\|_\VV \le K$. That is (e).
\qed\end{proof}

\subsection{The characteristic flow of the Vlasov equation}

Since the Vlasov equation is a first-order partial differential equation, it suggests itself to consider the characteristic system.
On that point, we will consider a general version of the Vlasov equation,
\begin{equation}
\label{VEF}
\partial_t f + v\cdot \partial_x f + F \cdot \partial_v f + v \times G \cdot \partial_v f= 0,
\end{equation}
with given fields $F=F(t,x)$ and $G=G(t,x)$. Then the following holds:

%
%

\begin{lemma}
	\label{CHS}
	Let $I\subset \mathbb R$ be an interval and let $F,G\in C(I\times\mathbb R^3;\mathbb R^3)$ be continuously differentiable with respect to $x$ and bounded on $J\times \mathbb R^3$ for every compact subinterval $J\subset I$. Then for every $t\in I$ and $z=(x,v)\in \mathbb R^6$ there exists a unique solution $I\ni s\mapsto (X,V)(s,t,x,v)$ of the characteristic system
	\begin{equation}
	\dot x = v,\quad
	\dot v = F(s,x) + v\times G(s,x)
	\end{equation}
	to the initial condition $(X,V)(t,t,x,v)=(x,v)$. The characteristic flow \linebreak$Z:=(X,V)$ has the following properties:
	\begin{itemize}
		\item[\textnormal{(a)}]
		$Z:I\times I\times \mathbb R^6 \to \mathbb R^6$ is continuously differentiable.
		\item[\textnormal{(b)}]
		For all $s,t\in I$ the mapping $Z(s,t,\cdot):\mathbb R^6 \to \mathbb R^6$ is a $C^1$-diffeomorphism with inverse $Z(t,s,\cdot)$, and $Z(s,t,\cdot)$ is measure preserving, i.e.,
		\begin{equation*}
		\det \frac{\partial Z}{\partial z} (s,t,z) = 1, \quad s,t\in I,\; z\in\RR^6\;.
		\end{equation*}
	\end{itemize}
\end{lemma}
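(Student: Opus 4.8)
The plan is to treat the characteristic system of \eqref{VEF} as an ordinary differential equation $\dot z = \Phi(s,z)$ on $\RR^6$, where $z=(x,v)$ and the right-hand side is $\Phi(s,x,v)=\big(v,\,F(s,x)+v\times G(s,x)\big)$, and then to run the standard machinery of ODE theory. First I would fix an arbitrary compact subinterval $J\subset I$ containing $t$. On $J\times\RR^6$ the map $\Phi$ is continuous in $s$ and, since $F,G$ are $C^1$ in $x$ while $v\times G$ is linear (hence smooth) in $v$, it is $C^1$ and in particular locally Lipschitz in $z$, locally uniformly in $s$. The Picard--Lindelöf theorem then yields a unique maximal local solution $s\mapsto Z(s,t,z)=(X,V)(s,t,z)$ satisfying $Z(t,t,z)=z$.

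The next step is to upgrade this to existence on all of $J$, and hence (since $J$ was arbitrary) on all of $I$, by ruling out finite-time blow-up. Here I would exploit that the magnetic force does no work: because $(V\times G)\scdot V=0$,
\begin{equation*}
\dds \tfrac12 |V|^2 = V\scdot\big(F+V\times G\big)=V\scdot F\le M_F\,|V|,
\end{equation*}
where $M_F:=\sup_{J\times\RR^3}|F|<\infty$. This gives the linear a priori bound $|V(s)|\le |v|+M_F\,|s-t|$ on $J$, and then $X$, governed by $\dot X=V$, is likewise bounded on $J$. As the trajectory therefore cannot leave a compact set in finite time, it extends over all of $J$. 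This a priori estimate is really the only genuinely nontrivial analytic input; everything else is standard, and I expect the main obstacle to be precisely this bound, which hinges entirely on the orthogonality of $V\times G$ to $V$.

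For part (a) I would invoke the theorem on differentiable dependence on initial data and parameters: since $\Phi$ and its phase-space derivative $D_z\Phi$ are continuous in $(s,z)$, the flow $(s,t,z)\mapsto Z(s,t,z)$ is continuously differentiable. Concretely, $\delz Z$ solves the linear variational equation $\dds\,\delz Z = D_z\Phi(s,Z)\,\delz Z$ with $\delz Z|_{s=t}=\mathrm{Id}$; differentiability in $s$ is immediate from $\dds Z=\Phi(s,Z)$; and differentiability in the initial time $t$ follows from the formula $\delt Z(s,t,z)=-\,\delz Z(s,t,z)\,\Phi(t,z)$, which I would obtain by differentiating the flow relation $Z(s,t,z)=Z\big(s,t+h,Z(t+h,t,z)\big)$ at $h=0$.

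Finally, for part (b) the inverse property is a consequence of uniqueness: $Z(t,s,\cdot)\circ Z(s,t,\cdot)=\mathrm{id}$ and vice versa, since both sides solve the same initial value problem, so $Z(s,t,\cdot)$ is a $C^1$-diffeomorphism with inverse $Z(t,s,\cdot)$. For the Jacobian I would apply Liouville's formula: $j(s):=\det\delz Z(s,t,z)$ satisfies $\dds j = \big(\divergence_z\Phi\big)(s,Z)\,j$ with $j(t)=1$. A direct computation shows the phase-space divergence vanishes, since $\divergence_x v=0$, $\divergence_v F(s,x)=0$, and each component of $v\times G$ is independent of the velocity variable against which it is differentiated, so that $\divergence_v(v\times G)=0$. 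Hence $\dds j\equiv 0$, giving $j\equiv 1$ and thus the claimed measure preservation.
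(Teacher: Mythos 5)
Your proof is correct. Note that the paper itself does not write out a proof of this lemma: it only remarks that for $G=0$ the argument is given by Rein \cite[p.~394]{rein} and that the general case "proceeds analogously." Your proposal is the same standard ODE machinery (Picard--Lindel\"of, continuation via an a priori bound, differentiable dependence on data, the group property from uniqueness, and Liouville's theorem for measure preservation), so in spirit it matches the proof being referenced; its added value is that it makes explicit the one step where "analogously" is not purely formal. For $G=0$ the right-hand side of the characteristic system is bounded on $J\times\RR^6$, so the velocity bound is immediate, whereas for $G\neq 0$ the term $v\times G$ grows linearly in $v$; your observation that the magnetic force does no work, $(V\times G)\scdot V=0$, restores the linear bound $|V(s)|\le |v|+M_F|s-t|$ and is exactly the same physical fact the paper itself invokes later in Step 3 of the proof of Theorem \ref{GCS} (an external magnetic field changes only the direction of the velocity, not its modulus). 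One small caveat: this orthogonality is elegant but not indispensable here --- the cruder estimate $|\dot V|\le M_F+M_G|V|$ together with Gronwall's lemma yields an exponential bound that serves equally well for ruling out blow-up on compact subintervals --- so your remark that the argument "hinges entirely" on the orthogonality overstates its necessity. Also, when passing from the differential inequality for $|V|^2$ to the bound on $|V|$, one should either invoke the quadratic version of Gronwall's lemma (as the paper does elsewhere, citing Dragomir \cite{dragomir}) or argue on the set where $V\neq 0$, since $|V|$ need not be differentiable at its zeros; this is a routine point and does not affect the validity of the proof.
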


The relation between the characteristic flow and the solution $f$ of the Vlasov equation \eqref{VEF} is described by the following lemma.

%
%

\begin{lemma}
	\label{FL2}
	Under the assumptions of Lemma \ref{CHS} the following holds:
	\begin{itemize}
		\item[\textnormal{(a)}]
		A function $f\in C^1(I\times\mathbb R^6)$ satifies the Vlasov equation \eqref{VEF} iff it is constant along every solution of the characteristic system \eqref{CHS}.
		\item[\textnormal{(b)}]
		Suppose that $0\in I$. For $\mathring f\in C^1(\mathbb R^6)$ the function $f(t,z):=\mathring f(Z(0,t,z))$, $t\in I,z\in \mathbb R^6$
		is the unique solution of \eqref{VEF} in the space $C^1(I\times \mathbb R^6)$ with $f(0)=\mathring f$. If $\mathring f$ is nonnegative then so is f. For all $t\in I$ and $1\le p\le \infty$, 
		$$\textnormal{supp}f(t) = Z(t,0,\textnormal{supp} \mathring f) \tand \|f(t)\|_p = \|\mathring f\|_p\,.$$
		
	\end{itemize}
\end{lemma}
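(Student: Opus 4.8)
The plan is to reduce everything to the elementary observation that the differential operator on the left-hand side of \eqref{VEF} is exactly the total derivative of $f$ along the characteristic curves produced by Lemma \ref{CHS}. Fix $(t,z)\in I\times\RR^6$ and write $s\mapsto Z(s,t,z)=(X,V)(s,t,z)$ for the characteristic through $z$ at time $t$. Using the chain rule together with the characteristic ODEs $\dot X = V$ and $\dot V = F(s,X)+V\times G(s,X)$, I would compute
$$\dds f(s,Z(s,t,z)) = \big(\delt f + v\cdot\delx f + F\cdot\delv f + v\times G\cdot\delv f\big)\big(s,Z(s,t,z)\big).$$
This identity is the whole engine of the lemma. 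For part (a), if $f$ solves \eqref{VEF} the right-hand side vanishes identically, so $s\mapsto f(s,Z(s,t,z))$ is constant; conversely, given any point $(s_0,z_0)$, evaluating the identity at $s=s_0$ along the characteristic with $t=s_0$, $z=z_0$ shows that constancy along characteristics forces \eqref{VEF} to hold at $(s_0,z_0)$. Since the point was arbitrary, the two conditions are equivalent.

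For part (b), I would first note that $f(t,z):=\mathring f(Z(0,t,z))$ lies in $C^1(I\times\RR^6)$ because $Z$ is $C^1$ by Lemma \ref{CHS}(a) and $\mathring f\in C^1$, and that $f(0,z)=\mathring f(Z(0,0,z))=\mathring f(z)$. The key step is to verify that this $f$ is constant along characteristics, for which I need the cocycle (flow) property $Z(0,s,Z(s,t,z))=Z(0,t,z)$; this follows from the uniqueness part of Lemma \ref{CHS}, since both sides, read as functions of the intermediate time, are the characteristic through $z$ at time $t$. Granting it,
$$f(s,Z(s,t,z)) = \mathring f\big(Z(0,s,Z(s,t,z))\big) = \mathring f\big(Z(0,t,z)\big),$$
which is independent of $s$, so by part (a) $f$ solves \eqref{VEF}. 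Uniqueness is then immediate: any $C^1$-solution $g$ with $g(0)=\mathring f$ is constant along characteristics by part (a), hence $g(t,z)=g(0,Z(0,t,z))=\mathring f(Z(0,t,z))=f(t,z)$. Nonnegativity of $f$ is clear from the formula and $\mathring f\ge 0$.

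The remaining assertions I would handle using Lemma \ref{CHS}(b). For the support, since $Z(t,0,\cdot)$ is the inverse of $Z(0,t,\cdot)$ and is a $C^1$-diffeomorphism, hence a homeomorphism, I would argue $\{z : f(t,z)\neq 0\}=Z(t,0,\{w : \mathring f(w)\neq 0\})$ and pass to closures to obtain $\supp f(t)=Z(t,0,\supp\mathring f)$. For the $L^p$-norms with $1\le p<\infty$, I would use the change of variables $w=Z(0,t,z)$, whose Jacobian determinant equals $1$ by the measure-preserving property of Lemma \ref{CHS}(b), so that $\int|\mathring f(Z(0,t,z))|^p\dz=\int|\mathring f(w)|^p\,\mathrm dw$; the case $p=\infty$ follows since $Z(0,t,\cdot)$ is a bijection and $f$ is continuous. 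I do not expect any serious obstacle here, as the entire lemma is a bookkeeping exercise around the chain-rule identity and the flow property. The only point requiring genuine care is the correct invocation of the cocycle property, which rests on ODE uniqueness, to show that the candidate solution is indeed constant along characteristics.
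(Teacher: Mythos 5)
Your proof is correct and is essentially the argument the paper relies on: the paper gives no proof of Lemma \ref{FL2} itself but cites G.~Rein's proof for the case $G=0$ and notes the general case is analogous, and that standard proof is exactly your chain-rule identity along characteristics combined with the cocycle property from ODE uniqueness and the measure-preserving diffeomorphism facts of Lemma \ref{CHS}(b). No gaps.
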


For $G=0$ the proofs of both lemmata are presented by G. Rein \cite[p.\,394]{rein}. The proofs for a general field $G\in C(I\times\mathbb R^3;\mathbb R^3)$ proceed analogously.

\subsection{Classical solutions for smooth external fields}

As already mentioned in the introduction, the standard initial value problem $(\eqref{VP2}, \eqref{IC})$ posesses a unique global classical solution. This result holds true if the Vlasov equation is equipped with an external magnetic field $B$ in $C\big([0,T];C^1_b(\RR^3;\RR^3)\big)$ which will be established in the next theorem. Unfortunately the proof does not work if the field is merely an element of $\BB$. Since such fields are only $L^2$ in time, the same holds for the right-hand side of the characteristic system. This makes it impossible to determine a solution in the classical sense of ordinary differential equations. However, we can approximate any field $B\in \BB$ by a sequence $(B_k)_{k\in\NN}\subset\MM \subset C\big([0,T];C^1_b(\RR^3;\RR^3)\big)$ according to Lemma \ref{LAC}. This allows us to construct a certain kind of strong solution to the field $B$ as a limit of the classical solutions induced by the fields $B_k$.

%
%

\begin{theorem}
	\label{GCS}
	Let $B \in C\big([0,T];C^1_b(\RR^3;\RR^3)\big)$ with $\|B\|_\WW\le 2K$ be arbitrary. Then the initial value problem \eqref{VPSU} possesses a unique classical solution \linebreak ${f\in C^1([0,T]\times\RR^6)}$. Moreover for all $t\in[0,T]$, ${f(t)=f(t,\cdot,\cdot)}$ is compactly supported in $\mathbb R^6$ in such a way that there exists some constant $R>0$ depending only on $T$, $\mathring f$, $K$ and $\beta$ such that for all $t\in[0,T]$,
	\begin{equation*}
	\textnormal{supp } f(t) \subset B_R^6(0) = \left\{(x,v)\in\mathbb R^6 : |(x,v)| < R\right\}.
	\end{equation*}
	If $B \in C\big([0,T];C^2_b(\RR^3;\RR^3)\big)$, then additionally  $f\in C\big([0,T];C^2_b(\RR^6)\big)$.
\end{theorem}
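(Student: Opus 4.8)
The plan is to transfer the classical global existence theory for Vlasov--Poisson (Pfaffelmoser, Schaeffer, Lions--Perthame) to the magnetic setting, treating the Lorentz force as a perturbation that is rendered harmless by the identity $(v\times B)\cdot v=0$. I would organise the argument as local existence, an a priori bound on the velocity support, and a regularity bootstrap.

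\emph{Local existence and uniqueness.} First I would run the standard Picard iteration on the characteristic flow. Setting $f_0:=\mathring f$, and given an iterate $f_n$ that is $C^1$ with compact support in $z$ uniformly on a short interval $[0,T_0]$, the function $f_n(t)$ lies in $L^2_r(\RR^6)$ for a fixed $r$, so $\delx\psi_{f_n}$ is well defined. By Lemma~\ref{NPOT} and interior Schauder estimates, $-\delx\psi_{f_n}$ is bounded, continuous on $[0,T_0]\times\RR^3$ and continuously differentiable in $x$; together with the hypothesis $B\in C([0,T];C^1_b)$ this shows the pair $(F,G)=(-\delx\psi_{f_n},B)$ meets the assumptions of Lemma~\ref{CHS}. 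I then obtain the $C^1$ flow $Z_{n+1}$ and set $f_{n+1}(t,z):=\mathring f(Z_{n+1}(0,t,z))$, which solves the Vlasov equation with frozen field by Lemma~\ref{FL2} and inherits compact support. A contraction estimate on a sufficiently small $T_0$, in which the fixed Lorentz term enters both flows identically and only through a Grönwall factor, shows $(f_n)$ is Cauchy and produces the unique local $C^1$ solution.

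\emph{Conservation laws and the support bound (the main obstacle).} Lemma~\ref{FL2} yields $\|f(t)\|_p=\|\mathring f\|_p$. The decisive structural fact is that energy is still conserved: differentiating $\tfrac12\iint|v|^2 f+\tfrac1{8\pi}\int|\delx\psi_f|^2$ and integrating by parts in $v$, the contribution of the Lorentz force equals $\tfrac12\iint\divergence_v\!\big(|v|^2(v\times B)\big)\,f$ and vanishes because $2v\cdot(v\times B)=0$ and $\divergence_v(v\times B)=0$; hence $E(t)=E(0)=:E_0$ is controlled by $\mathring f$ alone, exactly as in the field-free case. Writing $P(t):=\sup\{\,|v| : (x,v)\in\supp f(s),\ 0\le s\le t\,\}$, the continuation criterion --- which adapts directly, the position law $\dot X=V$ being unchanged and the force $v\times B$ bounded once $P$ is finite --- reduces matters to an a priori bound on $P$ on $[0,T]$. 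Along characteristics one has
\begin{align*}
|V(s)|\le |v|+\int_0^s|\delx\psi_f(\tau,X(\tau))|\dtau+\int_0^s|V(\tau)|\,\|B(\tau)\|_\infty\dtau .
\end{align*}
The field integral is handled by the Pfaffelmoser--Schaeffer argument, which uses only the conserved quantities $\|\mathring f\|_1,\|\mathring f\|_\infty,E_0$ and the dispersion inherent in $\dot X=V$ --- none of which is affected by $B$ --- and produces a bound sublinear in $P(t)$. The magnetic integral is absorbed by Grönwall's inequality, using
\begin{align*}
\int_0^T\|B(\tau)\|_\infty\dtau\le\sqrt{T}\,\|B\|_{L^2(0,T;C^{1,\gamma})}\le k_1\sqrt{T}\,\|B\|_\WW\le 2k_1K\sqrt{T}
\end{align*}
from the embedding of Lemma~\ref{LAC}(c). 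Closing the resulting inequality bounds $P(T)$ by a constant depending only on $T,\mathring f,K,\beta$, and then $|X(s)|\le|X(0)|+T\,P(T)$ bounds the spatial support, giving the uniform radius $R$ and hence global existence on $[0,T]$.

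\emph{Regularity.} Since $Z$ is $C^1$ by Lemma~\ref{CHS}(a) and $\mathring f\in C^2_c\subset C^1_c$, the representation $f(t,z)=\mathring f(Z(0,t,z))$ gives $f\in C^1([0,T]\times\RR^6)$. For the final assertion I would bootstrap: differentiating the characteristic system shows $f(t,\cdot)\in C^{1,\gamma}$, hence $\rho_f(t,\cdot)\in C^{1,\gamma}$ with compact support, so elliptic regularity upgrades $\delx\psi_f(t,\cdot)$ to $C^2$; if in addition $B\in C([0,T];C^2_b)$ the flow $Z(0,t,\cdot)$ becomes $C^2$ in $z$ and continuous in $t$, whence $f\in C([0,T];C^2_b(\RR^6))$. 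I expect essentially all the difficulty to sit in the velocity-support step --- verifying that the delicate Pfaffelmoser--Schaeffer field estimate survives the magnetic perturbation and that the Grönwall factor is uniform in $B$ through the $\WW$-norm rather than the $C^1_b$-norm.
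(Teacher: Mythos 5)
Your local existence step (the Kurth-style iteration along characteristics) and your final regularity bootstrap are consistent with what the paper does, and your observation that $(v\times B)\cdot v=0$ preserves the energy is correct. The gap sits exactly in the step you yourself flag as carrying all the difficulty: the bound on $P(t)$. You estimate the Lorentz term by $|V(\tau)|\,\|B(\tau)\|_\infty$ and propose to absorb $\int_0^s|V(\tau)|\,\|B(\tau)\|_\infty\dtau$ by Gronwall. This is precisely the ``analogous approach'' the paper rejects. The Pfaffelmoser--Schaeffer scheme needs the increment over a short interval $[t-\delta,t]$ to be bounded by $C\delta P(t)^\alpha$ with $\alpha<1$; your Gronwall factor $\exp\big(\int_{t-\delta}^t\|B\|_\infty\dtau\big)\le\exp\big(C\sqrt{\delta}\big)$ turns this into a bound of the form $C\delta P(t)^\alpha + C\sqrt{\delta}\,P(t)$. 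The extra term is \emph{linear} in $P(t)$, and summing over the $\sim T/\delta$ subintervals produces a contribution $\sim (T/\sqrt{\delta})\,P(t)$ whose coefficient is not small, so the bootstrap inequality cannot be closed. Running Gronwall globally on $[0,T]$ instead of interval-wise does not rescue this: the constant $e^{2k_1K\sqrt{T}}$ is indeed harmless (it depends only on $K,T,\beta$), but you then need the Pfaffelmoser--Schaeffer estimate $\int_{t-\delta}^t|\delx\psi_f(\tau,X(\tau))|\dtau\le C\delta P(t)^\alpha$ for the magnetically curved characteristics, and its proof requires short-time control of the velocities of \emph{all} characteristics --- a quantity affected by $B$, contrary to your claim that the argument uses nothing touched by the magnetic field --- where the same $\sqrt{\delta}\,P$ term reappears. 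You assert that this estimate ``survives the magnetic perturbation'' but give no argument, and with your form of the increment inequality it does not.

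The paper's fix is to deploy the orthogonality identity where it is actually needed: pointwise along a single characteristic, not only under the energy integral. Differentiating the modulus squared gives
\begin{align*}
\ddtau|V(\tau)|^2 = 2V(\tau)\cdot\big(-\delx\psi_f(\tau,X(\tau)) + V(\tau)\times B(\tau,X(\tau))\big) = -2V(\tau)\cdot\delx\psi_f(\tau,X(\tau)),
\end{align*}
so the magnetic term vanishes \emph{identically} from the evolution of $|V|$, and the quadratic version of Gronwall's lemma (Dragomir) then yields
\begin{align*}
|V(t)| \le P(t-\delta) + \int\limits_{t-\delta}^t \iint \frac{f(s,y,w)}{|y-X(s)|^2}\;\mathrm dw\,\mathrm dy\ds\,,
\end{align*}
an increment inequality of exactly the field-free form, with no $B$-dependent factor at all. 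This single observation --- the magnetic field rotates the velocity but never changes its modulus --- is what makes the Pfaffelmoser--Schaeffer machinery transferable with only minor changes; ironically you have the identity in hand but apply it only to the conserved energy, where it is not the decisive point.
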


\smallskip

\begin{proof} \textit{Step 1 - Local existence and uniqueness:} For the standard Vlasov-Poisson system $(\eqref{VP2}, \eqref{IC})$ the existence and uniqueness of a local classical solution was firstly established by R. Kurth \cite{kurth}. As the field ${B\in C\big([0,T];C^1_b(\RR^3;\RR^3)\big)}$ is regular enough the existence and uniqueness of a local classical solution to our problem can be proved analogously. In this paper we will only sketch the most important steps of that proof. The idea is to define a recursive sequence by
	\begin{align*}
	f_0(t,z) := \mathring f(z) \tand f_{k+1}(t,z) := \mathring f \big(Z_k(0,t,z) \big),\quad k\in \NN_0
	\end{align*}
	for any $t\ge 0$ and $z=(x,v)\in\RR^6$ where $Z_k$ denotes the solution of
	\begin{align*}
	\dot z = {\begin{pmatrix} \dot x \\ \dot v \end{pmatrix}} = \begin{pmatrix} v \\ -\delx\psi_{f_k}(s,x) + v\times B(s,x) \end{pmatrix} \twith Z_k(t,t,z) = z\;.
	\end{align*}
	By induction we obtain that for any $k\in\NN_0$, $Z_k$ is continuously differentiable with respect to all its variables and $f_k\in C^1([0,T]\times\RR^6)$. Moreover, according to Lemma \ref{FL2}, it holds that $f_{k+1} \in C^1([0,T]\times\RR^6)$ is the unique solution of the initial value problem
	\begin{align*}
	\delt f + v\cdot\delx f - \delx\psi_{f_k}\cdot\delv f +(v\times B)\cdot\delv f = 0, \quad f\big\vert_{t=0} = \mathring f\,.
	\end{align*}
	We intend to prove that the sequence $(f_k)$ converges to the solution of the initial value problem \eqref{VPSU} if $k$ tends to infinity. Analogously to Kurth's proof we can show that there exists $\delta>0$ and functions $Z$, $f$ with $Z\in C([0,\delta_0]^2\times\RR^6)$, $f\in C([0,\delta_0]\times\RR^6)$ for any $\delta_0<\delta$ such that
	\begin{align*}
	Z(s,t,z) = \underset{k\to\infty}{\lim} Z_k(s,t,z) \tand f(t,z) = \mathring f\big(Z(0,t,z)\big) = \underset{k\to\infty}{\lim} f_k(t,z)
	\end{align*}
	uniformely in $s$, $t$ and $z$. For any arbitrary $\delta_0<\delta$ it turns out that $(\delx\psi_{f_k})$ and $(D_x^2\psi_{f_k})$ are Cauchy sequences in $C_b([0,\delta_0]\times\RR^3)$. This implies that $\delx\psi_f$ and $D_x^2\psi_f$ lie in $C_b([0,\delta_0]\times\RR^3)$ and consequently ${Z\in C^1([0,\delta_0]^2\times \RR^6)}$. As $\delta_0$ was arbitrary this yields ${f\in C^1([0,\delta[\times \RR^6)}$. Thus $f$ is a local solution of the initial value problem \eqref{VPSU} on the time interval $[0,\delta[$ according to Lemma \ref{FL2} as it is constant along any characteristic curve. \bpskip
	
	\textit{Step 2 - Higher regularity:} If $B \in C([0,T];C^2_b)$ it additionally follows by induction that for all $k\in \NN_0$ and $s,t\in[0,T]$, 
	\begin{gather*}
	\delx\psi_{f_k}\in C([0,T];C^2_b(\RR^3;\RR^3)), \quad Z_k(s,t,\cdot) \in C^2(\RR^6), \quad f_k \in C([0,T];C^2_b(\RR^6)).
	\end{gather*}
	Moreover, if $\delta$ is sufficiently small, one can also show that $(D_x^3 \psi_{f_k})$ is a Cauchy sequence in $C_b([0,\delta_0]\times\RR^3)$ for any $\delta_0<\delta$. Thus we can conclude that 
	\begin{gather*}
	\delx\psi_{f}\in C([0,T];C^2_b(\RR^3;\RR^3)), \quad Z(s,t,\cdot) \in C^2(\RR^6), \quad f \in C([0,T];C^2_b(\RR^6)).\pskip
	\end{gather*}
	
	\textit{Step 3 - Continuation onto the interval $[0,T]$:} Obviously Batt's continuation criterion (cf.\;J. Batt \cite{batt}) also holds true in our case. This means that we can show that the solution exists on $[0,T]$ by the following argumentation:
	We assume that $[0,T^*[$ with $T^*\le T$ is the right maximal time interval of the local solution and we show that
	\begin{align}
	\begin{aligned}
	P(t) :&= \max \{ |v| : (x,v)\in \text{supp } f(s),0\le s\le t\}\\[2mm]
	& = \max \{ |V(s,0,x,v)| : (x,v)\in \text{supp } \mathring f,0\le s\le t\}
	\end{aligned}
	\end{align}
	is bounded on $[0,T^*[$. But then, according to Batt, the solution $f$ can be extended beyond $T^*$ which is a contradiction as $T^*$ was chosen to be maximal. Hence we can conclude that the solution exists on the whole time interval $[0,T]$. \pskip
	
	For the standard Vlasov-Poisson system (without an external field) such a bound on $P(t)$ is established in the Pfaffelmoser-Schaeffer proof \cite{pfaffelmoser,schaeffer}. We will proceed analogously and single out one particle in our distribution. Mathematically, this means to fix a characteristic $(X,V)(s)=(X,V)(s,0,x,v)$ with $(X,V)(0)=(x,v)\in\text{supp }\mathring f$. Now suppose that $0\le\delta\le t< T^*$. In the following, constants denoted by $C$ may depend only on $\mathring f$, $T$, $K$ and $\beta$. The aim in the Pfaffelmoser-Schaeffer proof is to bound the difference ${|V(t)-V(t-\delta)|}$ from above by an expression in the shape of $C\delta P(t)^\alpha$ where $\alpha < 1$ is essential. In our case an analogous approach would merely yield some bound that is ideally in the fashion of $ C\delta P(t)^\alpha + C\sqrt{\delta}P(t)$ because of the additional field term in the $\dot v$ equation of the characteristic system. However, we can use the fact that an external magnetic field changes only the direction of a particle's velocity vector but not its modulus. This is reflected in the following computation: For $s\in [t-\delta,t]$,
	\begin{align*}
	|V(s)|^2 & = |V(t-\delta)|^2 + \int\limits_{t-\delta}^s \ddtau |V(\tau)|^2 \dtau \\[0.15cm]
	& \le P(t-\delta)^2 + 2\int\limits_{t-\delta}^s \big|\partial_x \psi_f(\tau,X(\tau))\big||V(\tau)| \dtau \;.
	\end{align*}
	The quadratic version of Gronwall's lemma (cf. Dragomir \cite[p.\,4]{dragomir}) and the definition of $\delx\psi_f$ then impliy that
	\begin{align*}
	|V(t)| 
	\le P(t-\delta) + \int\limits_{t-\delta}^t \iint \frac{f(s,y,w)}{|y-X(s)|^2}\ \mathrm dw\mathrm dy \ds \;.
	\end{align*}
	Using this inequality, the rest of the proof proceeds very similarly to the\linebreak Pfaffelmoser-Schaeffer proof.
\qed\end{proof}

Temporarily we will write $f_B$ to denote the classical solution that is induced by the field~$B$. In order to prove that any field $B\in\BB$ still induces a strong solution of the initial value problem the following two lemmata are essential. For fields $B\in\MM$ Lemma \ref{LIP} asserts that $f_B$ depends Lipschitz continuously on $B$ while its derivatives $\delz f_B$ and $\delt f_B$ are Hölder continuous with respect to $B$. In the course of the construction of a strong solution to some field $B\in\BB$ we will approximate $B$ by a sequence $(B_k)\subset\MM$ and then Lemma \ref{LIP} will ensure that $(f_{B_k})$, $(\delt f_{B_k})$ and $(\delz f_{B_k})$ are Cauchy sequences in some sense. To prove Lemma \ref{LIP} we will need some uniform bounds that are established in Lemma \ref{ES1}.

%
%

\begin{lemma}
\label{ES1}
Let $B\in \MM$ be any field. For $t,s\in[0,T]$ and $z=(x,v)\in\RR^6$ let $Z_B=Z_B(s,t,z)$ $=(X_B,V_B)(s,t,x,v)$ be the solution of the characteristic system with $Z_B(t,t,z)=z$. Furthermore let $f_B$ be the classical solution of the initial value problem \eqref{VPSU} to the field $B$. Then, there exist constants $R_Z\ge R$, $c_1,c_2,c_3,c_4>0$ depending only on $\mathring f$, $T$, $K$, and $\beta$ such that for all $t,s\in [0,T]$,\\[-0.3cm]
\begin{gather*}
\|Z_B(s,t,\cdot)\|_{L^\infty(B^6_R(0))} \le R_Z\,,\quad
\|D_z Z_B(s,t,\cdot)\|_{L^\infty(B^6_R(0))}\le c_1\,,\\
\|\delz f_B(t)\|_\infty \le c_2\,,\quad
\|D_x^2 \psi_{f_B}(t)\|_\infty \le c_3\,,\quad
\|\delt f_B\|_{L^2(0,T;C_b)}\le c_4 \,.
\end{gather*}
$Z_B$ and $f_B$ are twice continuously differentiable with respect to $z$ and there exist constants $c_5,c_6>0$ depending only on $\mathring f$, $T$, $K$, and $\beta$ such that for all $s\in[0,T]$,
\begin{gather*}
\big\|t\mapsto Z_B(s,t,\cdot)\big\|_{L^\infty(0,T;W^{2,\beta}(\BR))}\le c_5, \quad \|D_z^2 f_B\|_{L^\infty(0,T;L^\beta)} \le c_6\;.
\end{gather*}
\end{lemma}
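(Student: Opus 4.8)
The plan is to bound every quantity uniformly over $B\in\MM$ by transporting the conservation laws and the support bound of Theorem~\ref{GCS} through the characteristic system, its first and second variational equations, and the representation $f_B(t,z)=\mathring f\big(Z_B(0,t,z)\big)$ from Lemma~\ref{FL2}.

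\textit{Preliminary bounds.} By Lemma~\ref{FL2}(b) every $L^p$-norm of $f_B(t)$ equals that of $\mathring f$, and by Theorem~\ref{GCS} we have $\supp f_B(t)\subset B_R^6(0)$ with $R$ depending only on $\mathring f,T,K,\beta$; hence $f_B(t)\in L^2_R(\RR^6)$ with norms independent of $B$, and Lemma~\ref{NPOT}(c) gives $\|\delx\psi_{f_B}(t)\|_\infty\le C\|\mathring f\|_\infty$. For $R_Z$ I would use that the Lorentz force does no work: along a characteristic $\dds|V(s)|^2=2V\cdot\big(-\delx\psi_{f_B}+V\times B\big)=-2V\cdot\delx\psi_{f_B}$, so $|V(s)|\le|v|+T\,\|\delx\psi_{f_B}\|_\infty$, and then $\dot X=V$ controls $|X(s)|$; evaluating on $B_R^6(0)$ yields $R_Z$ depending only on $R,T$ and the field bound.

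\textit{First variational equation --- the main point.} Differentiating the characteristic system in $z$ gives $\dds D_zZ_B=A(s)\,D_zZ_B$ with $D_zZ_B(t,t,\cdot)=I$, where $A(s)$ is assembled from $I$, the skew matrix of $B(s,X)$, the term $V\times\delx B(s,X)$ and $-D_x^2\psi_{f_B}(s,X)$, so that $\|A(s)\|_\infty\le C\big(1+\|B(s)\|_{C^1}+\|D_x^2\psi_{f_B}(s)\|_\infty\big)$ after using $R_Z$. The obstacle is that $\|D_x^2\psi_{f_B}(s)\|_\infty$ is \emph{not} controlled by Lemma~\ref{NPOT} (which only gives $L^p$ and Hölder bounds on $\delx\psi$) and is itself coupled to $D_zZ_B$ through $\delz f_B(s)=D\mathring f(Z_B)\,D_zZ_B(0,s,\cdot)$. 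I would break this loop with the classical logarithmic estimate $\|D_x^2\psi_{f_B}(s)\|_\infty\le C\big(1+\|\rho_{f_B}(s)\|_\infty\big)\big(1+\log^+\|\delx\rho_{f_B}(s)\|_\infty\big)+C\|\rho_{f_B}(s)\|_1$ combined with $\|\delx\rho_{f_B}(s)\|_\infty\le C\|\delz f_B(s)\|_\infty\le C\,h(s)$, where $h(s):=\|D_zZ_B(0,s,\cdot)\|_{L^\infty(B_R^6(0))}$. The Gronwall bound $h(t)\le\exp\big(\int_0^t\|A(s)\|_\infty\,ds\big)$ then reads, after taking logarithms, as a \emph{linear} Gronwall inequality for $\log^+h$ with an $L^1$ inhomogeneity $a(s)=C(1+\|B(s)\|_{C^1})$, the latter being integrable because $\int_0^T\|B(s)\|_{C^1}\,ds\le\sqrt T\,\|B\|_{L^2(0,T;C^{1,\gamma})}\le C$ by Lemma~\ref{LAC}(c) and Cauchy--Schwarz. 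This yields $c_1$; the chain rule then gives $c_2$ on $\delz f_B$, and reinserting $c_2$ into the logarithmic estimate gives $c_3$ on $\|D_x^2\psi_{f_B}\|_\infty$.

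\textit{Remaining bounds.} The bound on $\delt f_B$ follows by reading $\delt f_B=-v\cdot\delx f_B+\delx\psi_{f_B}\cdot\delv f_B-(v\times B)\cdot\delv f_B$ off the Vlasov equation: the right-hand side is $\le C(1+\|B(t)\|_\infty)$, so $\|\delt f_B\|_{L^2(0,T;C_b)}^2\le C\int_0^T(1+\|B(t)\|_\infty)^2\,dt\le C(1+\|B\|_\WW^2)$, which is exactly why only an $L^2$-in-time bound is available. Twice-differentiability in $z$ follows from the final assertion of Theorem~\ref{GCS} (since $\MM\subset C([0,T];C^2_b)$) together with smooth dependence of ODE flows on initial data, the coefficients being continuous because $\delx\psi_{f_B}\in C([0,T];C^2_b)$ by Step~2 of that proof. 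For $c_5$ I would differentiate the variational equation once more: $D_z^2Z_B$ solves $\dds D_z^2Z_B=A\,D_z^2Z_B+Q$ with quadratic forcing $Q$ involving $(D_zZ_B)^{\otimes2}$ and the second-order objects $D_x^3\psi_{f_B}$ and $D_x^2B$. These are only $L^\beta$ --- indeed $\|D_x^3\psi_{f_B}(t)\|_{L^\beta}\le C\|D_x\rho_{f_B}(t)\|_{L^\beta}\le C$ by Calderón--Zygmund and the bound on $\delz f_B$, while $\int_0^T\|D_x^2B(\tau)\|_{L^\beta}\,d\tau\le\sqrt T\,\|B\|_\WW$ --- so the estimate only closes in $L^\beta(\BR)$; Gronwall in $L^\beta$ (using $\int_0^T\|A\|_\infty<\infty$) gives $c_5$, hence the $W^{2,\beta}(\BR)$ bound after adding the lower-order terms already controlled in $L^\infty$. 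Finally $c_6$ comes from $D_z^2f_B=D^2\mathring f\,(D_zZ_B)^{\otimes2}+D\mathring f\,D_z^2Z_B$, estimating the first summand in $L^\infty$ via $c_1$ and the second in $L^\beta$ via $c_5$.
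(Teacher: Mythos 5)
Your proposal is correct and takes essentially the same route as the paper's proof: the no-work/Gronwall argument for $R_Z$, the first variational equation closed via Rein's logarithmic estimate on $\|D_x^2\psi_{f_B}\|_\infty$ together with the $L^1$-in-time integrability of $\|B(t)\|_{C^1}$ from Lemma~\ref{LAC}(c), the Vlasov equation itself for $c_4$, and the second variational equation with the Calder\'on--Zygmund bound $\|D_x^3\psi_{f_B}(t)\|_{L^\beta}\le C$ and the $L^\beta$-only control of $D_x^2B$ for $c_5$, $c_6$. The only cosmetic deviations are that you run a linear Gronwall inequality directly on the $L^\beta(\BR)$-norm of $D_z^2Z_B$ where the paper estimates its $\beta$-th power and invokes a nonlinear Gronwall lemma with exponent $\tfrac{\beta-1}{\beta}$, and that (like the paper) you leave implicit the measure-preserving change of variables needed to bound compositions such as $D_x^3\psi_{f_B}(\tau,X_B(\tau,t,\cdot))$ in $L^\beta_z$.
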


\begin{proof}
\label{PES1}
Let $s,t\in [0,T]$ and ${z\in \BR}$ be arbitrary (without loss of generality $s\le t$) and let $i,j\in {1,...,6}$ be arbitrary indices. Let $B\in\MM$ be an arbitrary field and let $Z_B\colon [0,T] \times [0,T] \times \RR^6 \to \RR^6$ denote the induced solution of the characteristic system satisfying the initial condition $Z_B(t,t,z) = z$.
For brevity, we will use the notation $Z_B(s)=Z_B(s,t,z)$. The letter $C$ will denote a positive generic constant depending only on $\mathring f$, $K$, $T$ and $\beta$. Using the fundamental theorem of calculus we obtain that
\begin{align*}
|Z_B(s)|^2 
& \le R^2 + \int\limits_s^T |Z_B(\tau)|^2\ \mathrm d\tau + \int\limits_s^T |Z_B(\tau)|\|\partial_x \psi_{f_B}(\tau) \|_\infty\dtau.
\end{align*}
Hence applying first the standard version and then the quadratic version of Gronwall's lemma provides that
\begin{align}
\|Z_B(s)\|_{L^\infty(\BR)} \le C + C \int\limits_s^T \|\partial_x \psi_{f_B}(\tau) \|_\infty\dtau \le C  =: R_Z.
\end{align}
The partial derivatives $\delzi Z_B$, $i=1,...,6$ can be bounded by\vspace{-2mm}
\begin{align*}
|\partial_{z_i} Z_B(s)| \le 1 + \int\limits_s^t C \big( 1 + \|D_x^2 \psi_{f_B}(\tau)\|_\infty + \|B(\tau)\|_{W^{1,\infty}} \big) |\partial_{z_i} Z_B(\tau)|\;\mathrm d\tau
\end{align*}
and consequently, by Gronwall's lemma,\vspace{-2mm}
\begin{align}
\label{ADZ}
\|D_z Z_B(s)\|_{L^\infty(\BR)} 
\le C \exp\left( C \int\limits_s^t \|D_x^2 \psi_{f_B}(\tau)\|_\infty \;\mathrm d\tau \right)
\end{align}
According to G. Rein in \cite[p.\,389]{rein},
\begin{align}
\label{AD2PSI}
\|D_x^2 \psi_{f_B}(t)\|_\infty &\le C\left[ (1+\|\rho_{f_B}(t)\|_\infty)(1+ \ln_+\|\partial_x \rho_{f_B}(t)\|_\infty) + \|\rho_{f_B}(t)\|_{L^1} \right] \nonumber \\
&\le C+C\ln_+\|\partial_x \rho_{f_B}(t)\|_\infty
\end{align}
as $\|\rho_{f_B}(t)\|_\infty \le \tfrac 4 3 R^3 \pi \|\mathring f\|_\infty$ and $\|\rho_{f_B}(t)\|_{L^1} = \|f_B(t)\|_{L^1} = \|\mathring f\|_{L^1}$. Moreover,
\begin{align*}
&|\partial_x \rho_{f_B}(t,x)|  \le \int\limits_{|v|\le R} |\partial_x f(t,x,v)|\;\mathrm dv
\le \frac{4\pi}{3}R^3 \; \|\partial_z \mathring f\|_\infty \|\partial_z Z_B(0)\|_{L^\infty(\BR)} \\[-1mm]
&\quad \le \frac{4\pi}{3}R^3\; \|\partial_z \mathring f\|_\infty\; C \exp\left( C \int\limits_0^t \|\partial_x^2 \psi_{f_B}(\tau)\|_\infty\;\mathrm d\tau \right)
\end{align*}
and now, by (\ref{AD2PSI}) and Gronwall's lemma, $\|D_x^2 \psi_{f_B}(t)\|_\infty \le C =: c_3$ for all ${t\in [0,T]}$. Thus, due to \eqref{ADZ}, $\|D_z Z_B(s) \|_{L^\infty(\BR)} \le C =: c_1$ for all ${t\in [0,T]}$. 
This directly yields
\begin{align}
\label{DELZF}
\| \partial_z f_B(t) \|_\infty \le \| \partial_z \mathring f \|_\infty  \|D_z Z_B(0) \|_{L^\infty(\BR)} \le C =: c_2\;.
\end{align}
and we can finally conclude that $\| \delt f_B \|_{L^2(0,T;C_b)} \le C =: c_4$ by expressing $\delt f$ by the Vlasov equation. In Step 2 of the proof of Theorem \ref{GCS} we have already showed that for all $s,t\in [0,T]$,\vspace{-1mm}
\begin{gather*}
\delx\psi_{f_B}\in C([0,T];C^2_b(\RR^3;\RR^3)), \;\; Z_B(s,t,\cdot) \in C^2(\RR^6), \;\; f_B \in C([0,T];C^2_b(\RR^6)).
\end{gather*}
Let $i,j,k\in \{1,...,6\}$ be arbitrary. Recall that, according to Lemma \ref{NPOT},
\begin{align}
\label{ESTPSI}
\begin{aligned}
\| \delxi \delxj \partial_{x_k} \psi_{f_B} (t) \|_{L^\beta} &= \| \delxi \delxj \psi_{\partial_{x_k} f_B} (t) \|_{L^\beta} \le C\,\| \partial_{x_k} f_B (t) \|_{L^\infty}.
\end{aligned}
\end{align}
Now, for all $s,t\in[0,T]$ (without loss of generality $s\le t$),\vspace{-3mm}
\begin{align*}
&\int\limits_\BR |\delzi\delzj Z_B(s,t,z)|^\beta \dz 
	= \beta \int\limits_s^t \int\limits_\BR |\delzi\delzj Z(\tau)|^{\beta-1}\, |\delzi\delzj \dot Z_B(\tau)| \dz \dtau.
\end{align*}
Note that for all $s,t\in[0,T]$ and $z\in\BR$,\vspace{-2mm}
\begin{align*}
|\delzi\delzj \dot Z_B(\tau,t,z)| 
&\le C\, |\delzi\delzj Z_B(\tau)| \Big( 1 + \|D_x^2 \psi_{f_B}(\tau)\|_{L^\infty} + \|D_x B(\tau)\|_{L^{\infty}} \Big) \\
&\quad + C\, \Big( |D_x^3 \psi_{f_B}(\tau,X_B(\tau))| + |D_x^2 B(\tau,X_B(\tau))| \Big)\,.
\end{align*}
Now, using Hölder's inequality with exponents $p=\frac{\beta-1}{\beta}$ and $q=\beta$, estimate~\eqref{ESTPSI} and a nonlinear generalization of Gronwall's lemma (cf. \cite[p.\,11]{dragomir}) with exponent $\frac{\beta-1}{\beta}\in ]0,1[ $, we obtain that $\|\delzi\delzj Z_k(s,t,\cdot)\|_{L^\beta} \le C$. This finally implies the $c_5$-estimate and the $c_6$-estimate easily follows.
\qed\end{proof}

%
%

\begin{lemma}
\label{LIP}
Let $B,H\in \MM$ and let $f_B,f_H$ be the induced classical solutions. Moreover, let $Z_B$ denote the solution of the characteristic system to the field $B$ satisfying $Z_B(t,t,z)=z$ and let $Z_H$ be defined analogously. Then, there exist constants $\ell_1,\ell_2,L_1,L_2,L_3>0$ depending only on $\mathring f$, $T$, $K$, $\beta$ such that
\begin{align}
\label{IEQ:l1}
\|Z_B-Z_H\|_{C([0,T];C_b(\BR))} &\;\le\; \ell_1 \|B-H\|_\WW\;,\\
\label{IEQ:l2}
\|\partial_z Z_B-\partial_z Z_H\|_{C([0,T];C_b(\BR))} &\;\le\; \ell_2 \|B-H\|_\WW^{\gamma}\;,\\
\label{IEQ:L1}
\|f_B-f_H\|_{C([0,T];C_b)} &\;\le\; L_1 \|B-H\|_\WW\;,\\
\label{IEQ:L2}
\|\partial_z f_B-\partial_z f_H\|_{C([0,T];C_b)} &\;\le\; L_2 \|B-H\|_\WW^\gamma\;,\\
\label{IEQ:L3}
\|\partial_t f_B-\partial_t f_H\|_{L^2(0,T;C_b)} &\;\le\; L_3 \|B-H\|_\WW^\gamma\;
\end{align}
where $\gamma=\gamma(\beta)$ is the H\"older exponent from Lemma \ref{LAC}.
\end{lemma}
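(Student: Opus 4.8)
The plan is to prove the five estimates in the stated order, each feeding into the next. Throughout, $C$ denotes a generic constant depending only on $\mathring f$, $T$, $K$, $\beta$, and I may assume without loss of generality that $\|B-H\|_\WW\le 1$, so that $\|B-H\|_\WW\le\|B-H\|_\WW^\gamma$ and every Lipschitz-type contribution is automatically dominated by the corresponding Hölder-type one. All the uniform bounds $R_Z,c_1,\dots,c_6$ of Lemma~\ref{ES1}, the elliptic estimates of Lemma~\ref{NPOT}, the embeddings of Lemma~\ref{LAC}(b),(c), and the representation $f_B(t,z)=\mathring f(Z_B(0,t,z))$ of Lemma~\ref{FL2}(b) are freely used.

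First I would prove \eqref{IEQ:l1}. Writing both characteristic systems in integrated form and subtracting, the velocity components contribute a term controlled by $|Z_B-Z_H|$, while the force splits into an electric and a magnetic part. For the electric part I write $\delx\psi_{f_B}(\tau,X_B)-\delx\psi_{f_H}(\tau,X_H)$ as $[\delx\psi_{f_B}(\tau,X_B)-\delx\psi_{f_H}(\tau,X_B)]+[\delx\psi_{f_H}(\tau,X_B)-\delx\psi_{f_H}(\tau,X_H)]$; the first bracket is $\le C\|f_B(\tau)-f_H(\tau)\|_\infty$ by Lemma~\ref{NPOT}(c) applied to $f_B-f_H$, which via $f_\bullet(t,z)=\mathring f(Z_\bullet(0,t,z))$ and the Lipschitz continuity of $\mathring f\in C^2_c$ reduces to $C\|Z_B-Z_H\|$, and the second bracket is $\le c_3\,|X_B-X_H|$ by the uniform bound on $D_x^2\psi_{f_H}$. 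For the magnetic part I split $V_B\times B(\tau,X_B)-V_H\times H(\tau,X_H)$ into $(V_B-V_H)\times B$, $V_H\times[B-H](\tau,X_B)$ and $V_H\times[H(\tau,X_B)-H(\tau,X_H)]$; the middle piece is the genuine source and is bounded, after time integration and the Cauchy--Schwarz inequality, by $R_Z\sqrt{T}\,\|B-H\|_{L^2(0,T;C_b)}\le C\|B-H\|_\WW$ thanks to $\WW\hookrightarrow L^2(0,T;C^{1,\gamma})$, while the other two pieces are absorbed into the Gronwall integral (their coefficients $\|B(\tau)\|_\infty$ and $\|D_xH(\tau)\|_\infty$ are $L^2$, hence $L^1$, in time). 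Gronwall's lemma then gives \eqref{IEQ:l1}, and \eqref{IEQ:L1} follows at once from $|f_B(t,z)-f_H(t,z)|\le\|\grad\mathring f\|_\infty\,|Z_B(0,t,z)-Z_H(0,t,z)|$.

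For \eqref{IEQ:l2} I differentiate the characteristic system with respect to $z_i$ and subtract the two linear variational equations, obtaining schematically $\delzi\dot Z_B-\delzi\dot Z_H=A_B(\delzi Z_B-\delzi Z_H)+(A_B-A_H)\,\delzi Z_H$, where $A_\bullet$ collects the matrices built from $D_x^2\psi_{f_\bullet}$ and $D_xB_\bullet$ evaluated along the trajectory. The first term supplies the Gronwall kernel (bounded by $c_3$ and $\|D_xB\|_\infty\in L^2$), and the second is the source, multiplied by $\|\delzi Z_H\|\le c_1$. The source $A_B-A_H$ is the crux: its electric part $D_x^2\psi_{f_B}(\tau,X_B)-D_x^2\psi_{f_H}(\tau,X_H)$ I split as before, bounding the $f$-difference piece $\|D_x^2\psi_{f_B}(\tau)-D_x^2\psi_{f_H}(\tau)\|_\infty$ by $C\|B-H\|_\WW$ through the logarithmically refined estimate \eqref{AD2PSI} (the $\ln_+\|\delx\rho\|_\infty$ factor stays bounded because $\|\delz f_\bullet\|_\infty\le c_2$ uniformly), and bounding the spatial-shift piece by $[D_x^2\psi_{f_H}(\tau)]_\gamma\,|X_B-X_H|^\gamma\le C\|B-H\|_\WW^\gamma$, the uniform local $C^{0,\gamma}$-bound on $D_x^2\psi_{f_H}$ coming from interior Schauder theory for $-\laplace\psi_{f_H}=4\pi\rho_{f_H}$ with $\rho_{f_H}(\tau)$ uniformly Lipschitz on its support. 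The magnetic part is treated identically, its spatial-shift piece $[D_xH(\tau)]_\gamma\,|X_B-X_H|^\gamma$ being the second source of the exponent $\gamma$ via the $C^{1,\gamma}$-bound of Lemma~\ref{LAC}(b). Collecting everything yields a linear integral inequality whose inhomogeneity is $C\|B-H\|_\WW^\gamma$ and whose kernel is $L^1$ in time, so Gronwall gives \eqref{IEQ:l2}.

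Finally, \eqref{IEQ:L2} follows by differentiating $f_B(t,z)=\mathring f(Z_B(0,t,z))$ and writing $\delz f_B-\delz f_H=[D\mathring f(Z_B)-D\mathring f(Z_H)]\,\delz Z_B+D\mathring f(Z_H)[\delz Z_B-\delz Z_H]$, invoking \eqref{IEQ:l1} (with $\|D^2\mathring f\|_\infty$, $c_1$) for the first term and \eqref{IEQ:l2} for the second. For \eqref{IEQ:L3} I express $\delt f_\bullet$ through the Vlasov equation and compare term by term: the transport and electric terms are handled by \eqref{IEQ:L2} and the Lipschitz bound on $\delx\psi_{f_B}-\delx\psi_{f_H}$, whereas the magnetic term is only $L^2$ in time --- which is exactly why this estimate is stated in $L^2(0,T;C_b)$ rather than $C([0,T];C_b)$ --- and taking the time-$L^2$ norm together with the $\WW$-bounds on $B$ and $B-H$ produces the factor $\|B-H\|_\WW^\gamma$. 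I expect the main obstacle to be the electric source in \eqref{IEQ:l2}: controlling $\|D_x^2\psi_{f_B}-D_x^2\psi_{f_H}\|_\infty$, which admits no naive $L^\infty$-by-$L^\infty$ elliptic bound and forces the use of the log-estimate \eqref{AD2PSI} with the uniform bound $c_2$, and establishing the uniform $C^{0,\gamma}$ modulus of $D_x^2\psi_{f_H}$ that, matched against the $C^{1,\gamma}$ modulus of the field, is what pins down the Hölder exponent $\gamma$.
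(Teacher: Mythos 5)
The overall architecture you propose --- integrated characteristic systems, the three-way splitting of the force terms, coupled Gronwall estimates closed through the representation $f_\bullet(t,z)=\mathring f\big(Z_\bullet(0,t,z)\big)$, and reading off \eqref{IEQ:L3} from the Vlasov equation --- is the paper's own, and your treatment of \eqref{IEQ:l1}, \eqref{IEQ:L1}, \eqref{IEQ:L2} and \eqref{IEQ:L3} is sound. The genuine gap sits exactly at the step you yourself single out as the crux of \eqref{IEQ:l2}: the claim $\|D_x^2\psi_{f_B}(\tau)-D_x^2\psi_{f_H}(\tau)\|_\infty\le C\|B-H\|_\WW$, justified by the log-estimate \eqref{AD2PSI} together with the uniform bound $\|\delz f_\bullet\|_\infty\le c_2$. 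This does not work. Applied to $g=f_B-f_H$, the estimate \eqref{AD2PSI} reads $\|D_x^2\psi_g\|_\infty\le C\big[(1+\|\rho_g\|_\infty)(1+\ln_+\|\delx\rho_g\|_\infty)+\|\rho_g\|_{L^1}\big]$; because of the additive constants, its right-hand side stays bounded away from zero no matter how small $g$ is, so it yields only an $O(1)$ bound, not one vanishing with $\|B-H\|_\WW$. Even the sharper parametrized form of the same singular-integral estimate (split the kernel at radius $d$ and optimize, $d\sim\|\rho_g\|_\infty/\|\delx\rho_g\|_\infty$) gives only $C\,\|B-H\|_\WW\,\big(1+\ln_+(1/\|B-H\|_\WW)\big)$: a Lipschitz bound of the kind you assert is simply not available for $\rho\mapsto D_x^2\psi$ in $L^\infty$ --- it is the same mechanism that forbids the \emph{naive $L^\infty$-by-$L^\infty$ elliptic bound} you mention. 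So, as written, the inhomogeneity of your variational Gronwall inequality is not controlled.

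The gap can be closed in two ways, and it is instructive to compare them. The paper never estimates this term against the size of $f_B-f_H$ at all: it writes $\delxi\delxj\psi_{f_B-f_H}=\delxi\psi_{\delxj(f_B-f_H)}$ and applies Lemma \ref{NPOT}(c), which gives the bound $C\,\|\delz f_B(\tau)-\delz f_H(\tau)\|_\infty$; by the chain rule this is $\le C\|B-H\|_\WW+C\|\delz Z_B-\delz Z_H\|_\infty$, so the term is absorbed into the coupled Gronwall loop for \eqref{IEQ:l2}/\eqref{IEQ:L2} rather than estimated beforehand. The only genuine sources are then the two spatial-shift terms --- \eqref{HOELDPSI}, which the paper obtains from Lemma \ref{NPOT}(c) applied to $\delxj f_B$ rather than from Schauder theory, and the $C^{1,\gamma}$ modulus of the magnetic fields --- plus terms of order $\|B-H\|_\WW$. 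Alternatively, your direct strategy can be repaired by replacing \eqref{AD2PSI} with an interpolation that produces genuine smallness: by Gagliardo--Nirenberg and Calderon--Zygmund (recall $\beta>3$), $\|D_x^2\psi_g\|_\infty\le C\|D_x^3\psi_g\|_{L^\beta}^{3/\beta}\,\|D_x^2\psi_g\|_{L^\beta}^{1-3/\beta}\le C\|\delx\rho_g\|_{L^\beta}^{3/\beta}\,\|\rho_g\|_{L^\beta}^{1-3/\beta}\le C\|B-H\|_\WW^{\gamma}$ with $\gamma=1-3/\beta$, using \eqref{IEQ:L1} and the uniform $c_2$-bound. A Hölder bound of this type is all your scheme needs, since the inhomogeneity must only be $O(\|B-H\|_\WW^\gamma)$; with either repair the rest of your argument goes through.
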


\begin{proof}
Let $B,H\in \MM$, $s,t\in[0,T]$ and $z\in\BR$ be arbitrary. Without loss of generality $s\le t$. Moreover, let $C>0$ denote a generic constant depending only on $\mathring f$, $T$ and $K$ and let $Z_B$ and $Z_H$ denote the solutions of the characteristic system to the fields $B$ and $H$ satisfying $Z_B(t,t,z) =z$ and $Z_H(t,t,z) =z$. Using the fundamental theorem of calculus, Lemma \ref{ES1} and Lemma \ref{LAC} we obtain\vspace{-2mm}
\begin{align*}
|Z_B(s) - Z_H(s)| 
&\le
C \int\limits_s^t \Big\{\big( 1 + \|D_x^2 \psi_{f_B}(\tau)\|_\infty + \|B(\tau)\|_{W^{1,\infty}} \big) |Z_B(\tau) - Z_H(\tau)| \\[-1mm]
&\hspace{-5pt}+\|\partial_x \psi_{f_B-f_H}(\tau)\|_{L^\infty(B_{R_Z}(0))}  + \|B(\tau)-H(\tau)\|_{L^\infty(B_{R_Z}(0))} \Big\} \dtau 
\end{align*}
by a straightforward computation. Thus by Lemma \ref{NPOT}\,(c) and Lemma \ref{ES1},
\begin{align*}
|Z_B(s) - Z_H(s)| &\le C \int\limits_s^t \|{f_B}(\tau)-{f_H}(\tau)\|_\infty \mathrm d\tau + C \|B-H\|_\WW
\end{align*}
and then by chain rule,
\begin{align*}
&\|f_B(t)-f_H(t)\|_\infty \le \|D\mathring f\|_\infty\; \|Z_B(0,t,\cdot) - Z_H(0,t,\cdot)\|_{L^\infty(\BR)}\\
&\quad \le C \int\limits_0^t \|{f_B}(\tau)-{f_H}(\tau)\|_\infty \dtau + C \|B-H\|_\WW
\end{align*}
which proves \eqref{IEQ:l1} and \eqref{IEQ:L1} if $\ell_1$ and $L_1$ are chosen suitably. By Lemma \ref{NPOT}\,(c),
\begin{align}
\label{HOELDPSI}
&|\delxi\delxj \psi_{f_B}(\tau,X_B(\tau)) - \delxi\delxj   \psi_{f_B}(\tau,X_H(\tau))| \notag \\
&\quad\le C\;|X_B(\tau) - X_H(\tau)|^{\gamma} \le C\;\|B-H\|_{L^2 ( 0,T;W^{1,\beta}(B_{R_Z}(0)) ) }^{\gamma}
\end{align}
for every $\tau\in[0,T]$ and every $i,j\in\{1,...,6\}$. This estimate can be used to prove \eqref{IEQ:l2} and \eqref{IEQ:L2} similarly to the above procedure. Finally \eqref{IEQ:L3} follows directly from \eqref{IEQ:L1} and \eqref{IEQ:L2} by representing $\delt f_B$ and $\delt f_H$ by their corresponding Vlasov equation.
\qed\end{proof}

\subsection{Strong solutions for admissible external fields}

Now we will show that any field $B\in\BB$ still induces a unique strong solution which can be constructed as the limit of solutions $f_{B_k}$ where $(B_k)\subset\MM$ with $B_k\to B$ in $\WW$. Such a strong solution is defined as follows:

%
%

\begin{definition}
\label{WKS}
Let $B\in\BB$ be any admissible field. We call $f$ a \textbf{strong solution} of the initial value problem \eqref{VPSU} to the field $B$, iff the following holds:
\begin{enumerate}
\item[\textnormal{(i)}]		For all $1\le p \le \infty$, $f\in W^{1,2}(0,T;L^p(\RR^6)) \cap L^2(0,T;W^{1,p}(\RR^6))$\\ $\subset C([0,T];L^p(\RR^6))$ with
$$\|f\|_{W^{1,2}(0,T;L^p)} + \|f\|_{L^2(0,T;W^{1,p})} \le C$$ for some constant $C>0$ depending only on $\mathring f$, $T$, $K$ and $\beta$.
\item[\textnormal{(ii)}]	$f$ satisfies the Vlasov equation
$$\delt  f + v\cdot \partial_x  f - \partial_x \psi_f \cdot \partial_v  f + (v\times B) \cdot \partial_v  f= 0$$
almost everywhere on $[0,T]\times\RR^6$.
\item[\textnormal{(iii)}]	$f$ satisfies the initial condition $f\big\vert_{t=0}=\mathring f$ almost everywhere on $\RR^6$,
\item[\textnormal{(iv)}]	For every $t\in[0,T]$, $\supp f(t) \subset \BR$ where $R$ is the constant from Theorem \ref{GCS}.
\end{enumerate}
\end{definition}

\smallskip

First of all one can easily establish that such a strong solution is unique.

%
%

\begin{proposition}
\label{UNIQS}
Let $B\in\BB$ be any field and suppose that there exists a strong solution $f$ of the initial value problem \eqref{VPSU} to the field $B$. Then this solution is unique.
\end{proposition}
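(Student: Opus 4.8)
The plan is to prove uniqueness via a Gronwall argument applied to the difference of two strong solutions. Suppose $f_1$ and $f_2$ are both strong solutions of \eqref{VPSU} to the same field $B\in\BB$, and set $g := f_1 - f_2$. By Definition \ref{WKS}(iii) both satisfy $f_i|_{t=0} = \mathring f$, so $g|_{t=0} = 0$ a.e. The strategy is to derive an integral inequality for $\|g(t)\|_{L^2}$ (or some convenient $L^p$ norm) that closes under Gronwall's lemma and forces $g \equiv 0$.

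First I would subtract the two Vlasov equations from Definition \ref{WKS}(ii). Since both solutions share the same external field $B$, the difference of the two equations is
\begin{align*}
\delt g + v\cdot\delx g - \delx\psi_{f_1}\cdot\delv f_1 + \delx\psi_{f_2}\cdot\delv f_2 + (v\times B)\cdot\delv g = 0
\end{align*}
almost everywhere. The only genuinely nonlinear term is the self-consistent field contribution, which I would split in the standard way as
\begin{align*}
-\delx\psi_{f_1}\cdot\delv f_1 + \delx\psi_{f_2}\cdot\delv f_2 = -\delx\psi_{g}\cdot\delv f_1 - \delx\psi_{f_2}\cdot\delv g,
\end{align*}
separating the part that sees the difference of potentials (acting on a fixed solution) from the transport-type part (the fixed potential acting on $g$). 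The next step is to test this equation against $g$ and integrate over $\RR^6$, producing an identity for $\ddt \tfrac12 \|g(t)\|_{L^2}^2$. The transport terms $v\cdot\delx g$, $\delx\psi_{f_2}\cdot\delv g$ and $(v\times B)\cdot\delv g$ are all divergence-free in $(x,v)$ (the magnetic term because $\divergence_v(v\times B)=0$ and the electric term because $\delx\psi_{f_2}$ is independent of $v$), so they integrate to zero against $g$ after integration by parts, exactly mirroring the conservation structure used in Lemma \ref{FL2}. What survives is the cross term $-\int \delx\psi_{g}\cdot\delv f_1\, g\dxv$.

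To close the estimate I would bound the surviving term by $\|\delx\psi_g\|_{L^2}\,\|\delv f_1\|_{L^\infty}\,\|g\|_{L^2}$, using that $\delv f_1$ is uniformly bounded by the strong-solution bound in Definition \ref{WKS}(i) together with the compact-support property (iv), and that $\|\delx\psi_g\|_{L^2}\le C\|g\|_{L^2}$ by the linear boundedness of the operator $\varphi\mapsto\delx\psi_\varphi$ established before Lemma \ref{NPOT} (controlled via Lemma \ref{NPOT}(b) on the fixed support ball $\BR$, where the relevant interpolation exponents are finite). This yields $\ddt\|g(t)\|_{L^2}^2 \le C\|g(t)\|_{L^2}^2$, and since $g(0)=0$, Gronwall's lemma gives $g\equiv 0$, hence $f_1=f_2$.

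The main obstacle is making the formal testing argument rigorous given only the strong-solution regularity $f_i\in W^{1,2}(0,T;L^p)\cap L^2(0,T;W^{1,p})$ rather than classical differentiability. The integration-by-parts manipulations that kill the transport terms must be justified at this regularity; the cleanest route is to exploit that this space embeds in $C([0,T];L^p)$, approximate by the smooth functions furnished by Lemma \ref{SOBBOCH}(a)–(b), verify the divergence-free cancellations at the smooth level, and pass to the limit using the uniform bounds of (i) and the compact support (iv), which keep every integral over the fixed ball $\BR$ and every norm finite. A secondary technical point is confirming that the product $\delx\psi_{f_2}\cdot\delv g$ and the magnetic term are genuinely integrable and that the boundary contributions at infinity vanish, both of which follow from (iv).
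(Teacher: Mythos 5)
Your proposal is correct and follows essentially the same route as the paper: the identical splitting $-\delx\psi_{f_1}\cdot\delv f_1 + \delx\psi_{f_2}\cdot\delv f_2 = -\delx\psi_{g}\cdot\delv f_1 - \delx\psi_{f_2}\cdot\delv g$, testing against the difference so that the transport, fixed-field and magnetic terms cancel by integration by parts, bounding the surviving cross term via $\|\delv f_1\|_\infty\,\|\delx\psi_g\|_{L^2}\,\|g\|_{L^2}\le C(R)\|\delv f_1\|_\infty\|g\|_{L^2}^2$, and closing with Gronwall from $g(0)=0$. Your extra remarks on justifying the testing argument at the $W^{1,2}(0,T;L^p)\cap L^2(0,T;W^{1,p})$ regularity (smooth approximation via Lemma \ref{SOBBOCH}, integrability of the coefficient $t\mapsto\|\delv f_1(t)\|_\infty\in L^2(0,T)$) are sound points the paper leaves implicit.
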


\begin{proof} 
Suppose that there exists another strong solution $g$ to the field $B$. Then the difference $h:=f-g$ satisfies
\begin{align*}
\delt h + v\cdot\delx h -\delx\psi_h \cdot\delv f -\delx \psi_g \cdot \delv h +(v\times B)\cdot\delv h = 0\;.
\end{align*}
almost everywhere on $[0,T]\times\RR^6$. Thus by integration by parts,
\begin{align*}
&\ddt \|h(t)\|_{L^2}^2 = 2\int \delt h \; h \dz = 2 \int \delx \psi_h\cdot \delv f\; h\dz \\
&\quad \le 2 \; \|\delv f(t)\|_{\infty}\; \|\delx\psi_h(t)\|_{L^2}\; \|h(t)\|_{L^2} \le C(R)\; \|\delv f(t)\|_\infty\;  \|h(t)\|_{L^2}^2\;.
\end{align*}
As $t\mapsto \|h(t)\|_{L^2}^2$ is continuous with $\|h(0)\|_{L^2}^2 =0$, Gronwall's lemma yields $\|h(t)\|_{L^2}^2=0$ for all $t\in[0,T]$. Hence for all $t\in[0,T]$, $f(t) = g(t)$ almost everywhere on $\RR^6$ which means uniqueness.
\qed\end{proof}

Now we will show that any admissible field $B\in\BB$ actually induces a unique strong solution. Note that this solution is even more regular than it was demanded in the definition. However the weaker requirements of the definition will be essential in the later approach (see Proposition \ref{COMP}) and it will also be important that uniqueness was established under those weaker conditions.

%
%

\begin{theorem}
\label{GWS}
Let $B\in\BB$. Then there exists a unique strong solution $f$ of the initial value problem \eqref{VPSU} to the field $B$. Moreover this solution satisfies the following properties which are even stronger than the conditions that are demanded in Definition \ref{WKS}:
\begin{enumerate}
\item[\textnormal{(a)}]		$f\in W^{1,2}(0,T;C_b(\RR^6))\cap C([0,T];C^1_b(\RR^6))\cap L^\infty(0,T;W^{2,\beta}(\RR^6))$ with
\begin{gather*}
\|f(t)\|_p = \|\mathring f\|_p\;,\quad t\in [0,T]\;,\quad 1\le p\le \infty\\
\text{and}\quad
\|f\|_{W^{1,2}(0,T;C_b)} + \|f\|_{C([0,T];C^1_b)} + \|f\|_{L^\infty(0,T;W^{2,\beta})} \le C
\end{gather*}
for some constant $C>0$ depending only on $\mathring f$, $T$, $K$ and $\beta$.
\item[\textnormal{(b)}]	$f$ satisfies the initial condition $f\big\vert_{t=0}=\mathring f$ everywhere on $\RR^6$,
\end{enumerate}
\end{theorem}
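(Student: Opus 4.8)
The plan is to construct the strong solution $f$ as the limit of the classical solutions $f_{B_k}$ induced by an approximating sequence $(B_k)\subset\MM$ with $B_k\to B$ in $\WW$, whose existence is guaranteed by Lemma \ref{LAC}(d). The key engine is Lemma \ref{LIP}: since $\|B_k-B_\ell\|_\WW\to 0$ as $k,\ell\to\infty$, the estimates \eqref{IEQ:L1}, \eqref{IEQ:L2} and \eqref{IEQ:L3} show that $(f_{B_k})$ and $(\delz f_{B_k})$ are Cauchy in $C([0,T];C_b(\RR^6))$ while $(\delt f_{B_k})$ is Cauchy in $L^2(0,T;C_b)$. Hence there is a limit $f$ with $f_{B_k}\to f$ in $C([0,T];C^1_b)$ and $\delt f_{B_k}\to\delt f$ in $L^2(0,T;C_b)$, which immediately gives $f\in W^{1,2}(0,T;C_b)\cap C([0,T];C^1_b)$. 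The uniform bounds of Lemma \ref{ES1} (the $c_2$, $c_4$, $c_6$ estimates) are inherited by $f$ through weak/weak-$*$ lower semicontinuity, yielding in particular $f\in L^\infty(0,T;W^{2,\beta})$ and the stated norm bound. The support property (iv) and the conservation $\|f(t)\|_p=\|\mathring f\|_p$ pass to the limit from the corresponding properties of each $f_{B_k}$ (Theorem \ref{GCS} and Lemma \ref{FL2}(b)), since the support radius $R$ is uniform in $k$.

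Next I would verify that this limit $f$ actually solves the Vlasov equation almost everywhere. Each $f_{B_k}$ satisfies
\begin{align*}
\delt f_{B_k} + v\cdot\delx f_{B_k} - \delx\psi_{f_{B_k}}\cdot\delv f_{B_k} + (v\times B_k)\cdot\delv f_{B_k} = 0
\end{align*}
classically on $[0,T]\times\RR^6$. I would pass to the limit term by term: the linear transport term $v\cdot\delx f_{B_k}$ converges because $\delz f_{B_k}\to\delz f$ uniformly on the common compact support; the field term $\delx\psi_{f_{B_k}}\to\delx\psi_f$ follows from the continuity of the operator $\varphi\mapsto\delx\psi_\varphi$ established via Lemma \ref{NPOT}(c) together with $f_{B_k}\to f$; and the magnetic term converges using $B_k\to B$ in $\WW\hookrightarrow L^2(0,T;C^{1,\gamma})$ by Lemma \ref{LAC}(c) paired with the uniform bound on $\delv f_{B_k}$. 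Since $\delt f_{B_k}\to\delt f$ in $L^2(0,T;C_b)$, every term converges in $L^2(0,T;C_b)$ (or at least in an $L^2$-in-time sense on the fixed spatial ball), so the limit satisfies the Vlasov equation almost everywhere, giving (ii). The initial condition (iii), indeed the stronger everywhere statement (b), follows from $f_{B_k}(0)=\mathring f$ for all $k$ and uniform convergence at $t=0$.

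Finally, uniqueness is already supplied by Proposition \ref{UNIQS}, which applies because the constructed $f$ satisfies the weaker Definition \ref{WKS} — this is precisely why that proposition was proved under the weaker hypotheses. It remains only to confirm that $f$ meets requirement (i) of Definition \ref{WKS}, namely $f\in W^{1,2}(0,T;L^p)\cap L^2(0,T;W^{1,p})$ for all $1\le p\le\infty$ with the quantitative bound; this is immediate from part (a) since the uniform compact support allows one to dominate every $L^p$-norm by the $C_b$-norm on $\BR$, and the $W^{1,\cdot}$ and $W^{1,2}$-in-time bounds follow from the $C^1_b$ and $W^{1,2}(0,T;C_b)$ control already obtained.

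I expect the main obstacle to be the justification that the limit genuinely satisfies the Vlasov equation \emph{almost everywhere} rather than merely in a distributional or weak sense: one must ensure that the nonlinear field term $\delx\psi_{f_{B_k}}\cdot\delv f_{B_k}$ converges to $\delx\psi_f\cdot\delv f$ pointwise almost everywhere, which requires combining the uniform $C^1$-convergence of $f_{B_k}$ with the continuity and regularity estimates for the Poisson operator in Lemma \ref{NPOT}, and then recognizing the identity as holding off a null set in time coming from the merely $L^2$-in-time regularity of $B$. The second delicate point is extracting the higher regularity $f\in L^\infty(0,T;W^{2,\beta})$ in the limit, which does not follow from strong $C^1$-convergence alone and must instead be obtained from the uniform $c_6$-bound of Lemma \ref{ES1} via weak-$*$ compactness and identification of the weak limit with $f$.
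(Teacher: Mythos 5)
Your proposal is correct and follows essentially the same route as the paper's proof: approximation of $B$ by smooth fields $(B_k)\subset\MM$ via Lemma \ref{LAC}(d), the Cauchy-sequence argument in $C([0,T];C^1_b)\cap W^{1,2}(0,T;C_b)$ from Lemma \ref{LIP}, recovery of the $L^\infty(0,T;W^{2,\beta})$ regularity from the uniform $c_6$-bound of Lemma \ref{ES1} by weak-$*$ compactness and identification of the limit, verification of the conditions of Definition \ref{WKS} by passing to the limit, and uniqueness via Proposition \ref{UNIQS}. The two "delicate points" you flag are precisely the ones the paper handles (the a.e.\ Vlasov equation via strong convergence of all terms, and the $W^{2,\beta}$ bound via Banach--Alaoglu), so there is no gap.
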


\begin{proof} Let $B\in\BB$ arbitrary. According to Lemma \ref{LAC}, we can choose some sequence ${(B_k)_{k\in\NN}\subset \MM}$ with $B_k\to B$ for $k\to\infty$ in $\WW$.
Now Lemma~\ref{LIP} and Lemma~\ref{ES1} provide that for all $t\in[0,T]$ and $j,k\in\NN$,
\begin{align*}
\|f_{B_k}-f_{B_j}\|_{C([0,T];C_b)} &\le L_1 \|B_k-B_j\|_\WW\;, \\
\|\partial_z f_{B_k}-\partial_z f_{B_j}\|_{C([0,T];C_b)} &\le L_3 \|B_k-B_j\|_\WW^{\gamma}\;,\\
\|\partial_t f_{B_k}-\partial_t f_{B_j}\|_{L^2(0,T;C_b)} &\le L_4 \|B_k-B_j\|_\WW\;,\\
\|D_z^2 f_{B_k}\|_{L^\infty(0,T;L^\beta)} &\le c_6\;.
\end{align*}
where $\gamma=\gamma(\beta)$ is the constant from Lemma \ref{LAC}. Hence, $(f_{B_k})_{n\in\NN}$ is a Cauchy sequence in $C([0,T];C^1_b)\cap W^{1,2}(0,T;C_b)$. Due to completeness there exists a unique function \linebreak $f\in C([0,T];C^1_b)\cap W^{1,2}(0,T;C_b)$ such that $f_{B_k}\to f$ in this space. Since $(f_{B_k})$ is also bounded in $L^\infty(0,T;W^{2,\beta})$ by some constant depending only on $\mathring f$, $T$, $K$ and $\beta$, the Banach-Alaoglu theorem states that there exists some function ${\bar f\in L^\infty(0,T;W^{2,\beta})}$ such that $f_{B_k}\overset{*}{\rightharpoonup} \bar f$ up to a subsequence. This means that for any ${\alpha\le 2}$, the sequence $(D_z^\alpha f_{B_k})$ converges to $D_z^\alpha \bar f$ with respect to the weak-*-topology on $[L^1(0,T;L^{\beta'})]^* \conf L^\infty(0,T;L^\beta)$ where $\nicefrac{1}{\beta} +\nicefrac{1}{\beta'} =1$. Because of uniqueness of the limit it holds that $D_z^\alpha f = D_z^\alpha \bar f$ and thus
$$ f = \bar f \in W^{1,2}(0,T;C_b)\cap C([0,T];C^1_b) \cap L^\infty(0,T;W^{2,\beta}).$$
To show that $f$ is a strong solution to the field $B$, we have to verify the conditions from Definition \ref{WKS}.  The strong convergence of $(f_{B_k})$ in $W^{1,2}(0,T;C_b)$ and $C([0,T];C^1_b)$ directly implies condition (ii). Moreover,\vspace{-1mm}
\begin{align*}
\big| f(0) - \mathring f \big| = \big| f(0) - f_{B_k}(0) \big| \le C\; \| f-f_{B_k} \|_{C([0,T];C_b)} \to 0,\qquad k\to \infty.
\end{align*}
Thus $f(0)=\mathring f$ everywhere on $\RR^6$ that is (b) which directly implies (iii). Due to uniform convergence and continuity of $f$, it is evident that $f(t)$ is also compactly supported in $B_R(0)$ for every $t\in [0,T]$. That is (iv). For $1\le q\le \infty$ arbitrary, $t\in [0,T]$ and $k\in\NN$, we have
\begin{align*}
&\Big| \|f(t)\|_q - \| \mathring f \|_q \Big| = \Big| \|f(t)\|_q - \| f_{B_k}(t) \|_q \Big|\le \|f(t) - f_{B_k}(t) \|_q \\
&\; \le \big( \lambda(B_R(0)) \big)^{\frac 1 q} \|f(t) - f_{B_k}(t) \|_\infty  \le \big( 1+\lambda(B_R(0)) \big) \|f - f_{B_k} \|_{C([0,T];C_b)} \to 0,
\end{align*}
if $k\to\infty$ where $\frac 1 q \conf 0$ if $q=\infty$. This means that $\|f(t)\|_q = \| \mathring f \|_q$ for every $t\in[0,T]$ and ${1\le q\le\infty}$. Moreover we can choose some fixed $k\in\NN$ such that
$$\|f-f_{B_k}\|_{W^{1,2}(0,T;C_b)} + \|f-f_{B_k}\|_{C([0,T];C^1_b)} \le 1\;.$$
Lemma \ref{ES1} implies that $\|f_{B_k}\|_{C([0,T];C^1_b)} \le C$, $\|f_{B_k}\|_{W^{1,2}(0,T;C_b)} \le C$ and $\|f_{B_k}\|_{L^\infty(0,T;W^{2,\beta})} \le C$. Thus
\begin{align*}
\|f\|_{W^{1,2}(0,T;C_b)} &\le \|f-f_{B_k}\|_{W^{1,2}(0,T;C_b)} + \|f_{B_k}\|_{W^{1,2}(0,T;C_b)} \le C, \\
\|f\|_{C([0,T];C^1_b)} &\le \|f-f_{B_k}\|_{C([0,T];C^1_b)}+ \|f_{B_k}\|_{C([0,T];C^1_b)}\le C.
\end{align*}
Moreover by the weak-* lower semicontinuity of the norm,
\begin{align*}
\|f\|_{L^\infty(0,T;W^{2,\beta})} &\le \underset{k\to\infty}{\lim\inf} \|f_{B_k}\|_{L^\infty(0,T;W^{2,\beta})}
\le C. 
\end{align*}
This proves (a) which includes condition (i). Finally, uniqueness follows directly from Proposition \ref{UNIQS}.
\qed\end{proof}

Now that we have showed that any magnetic field $B\in\BB$ induces a unique strong solution of the initial value problem \eqref{VPSU}, we can define an operator mapping every admissible field onto its induced state.

\begin{definition}
\label{CSO}
The operator
\begin{align*}
f. \;\colon \BB \to C([0,T];L^2(\RR^6)),\; B \mapsto f_B
\end{align*}
is called the \textbf{field-state operator}. At this point $f_B$ denotes the unique strong solution of \eqref{VPSU} that is induced by the field $B\in\BB$.
\end{definition}

From now on the notation $f_B$ is to be understood as the value of the field-state operator at point $B\in\BB$.

\section[Continuity and compactness of the field-state operator]{Continuity and compactness of the field-state operator}

Obviously the Lipschitz estimates of Lemma \ref{LIP} hold true for the strong solutions by approximation. 

\begin{corollary}
\label{WLIP}
Let $L_1,\,L_2,\,L_3,\,c_2,\,c_4,\,c_6$ be the constants from Lemma \ref{ES1} and Lemma~\ref{LIP}. Then for all $B,H\in\BB$,\vspace{-2mm}
\begin{gather*}
\begin{aligned}
\|f_B-f_H\|_{C([0,T];C_b)} &\;\le\; L_1 \|B-H\|_\WW\;,\\
\|\partial_z f_B-\partial_z f_H\|_{C([0,T];C_b)} &\;\le\; L_2 \|B-H\|_\WW^{\gamma}\;,\\
\|\partial_t f_B-\partial_t f_H\|_{L^2(0,T;C_b)} &\;\le\; L_3 \|B-H\|_\WW^{\gamma}\;,
\end{aligned}\\
\|\delz f_B\|_{C([0,T];C_b)} \le c_2,\;\; \|\delt f_B\|_{L^2(0,T;C_b)} \le c_4,\;\; \|D_z^2 f_B\|_{L^\infty(0,T;W^{2,\beta})} \le c_6.
\end{gather*}
\end{corollary}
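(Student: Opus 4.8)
The plan is to obtain every assertion by approximation, pushing the inequalities of Lemma~\ref{LIP} and the bounds of Lemma~\ref{ES1}---both established only for smooth fields in~$\MM$---through to arbitrary admissible fields by means of the strong-solution construction of Theorem~\ref{GWS}. First I would fix $B,H\in\BB$ and use Lemma~\ref{LAC}(d) to pick sequences $(B_k),(H_k)\subset\MM$ with $B_k\to B$ and $H_k\to H$ in~$\WW$. Since the strong solution is unique (Proposition~\ref{UNIQS}), the construction in the proof of Theorem~\ref{GWS} is independent of the chosen approximating sequence, so the induced classical solutions satisfy $f_{B_k}\to f_B$ and $f_{H_k}\to f_H$ strongly in $C([0,T];C^1_b)\cap W^{1,2}(0,T;C_b)$, while additionally $f_{B_k}\overset{*}{\rightharpoonup} f_B$ in $L^\infty(0,T;W^{2,\beta})$ along a subsequence.

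For each of the three Lipschitz/H\"older estimates I would apply Lemma~\ref{LIP} to the pair $(B_k,H_k)\in\MM$ and then let $k\to\infty$. For instance, from $\|f_{B_k}-f_{H_k}\|_{C([0,T];C_b)}\le L_1\|B_k-H_k\|_\WW$ the left-hand side tends to $\|f_B-f_H\|_{C([0,T];C_b)}$ by the strong convergence in $C([0,T];C_b)$ and continuity of the norm, while the right-hand side tends to $L_1\|B-H\|_\WW$ because $B_k-H_k\to B-H$ in~$\WW$. The estimate for $\partial_z$ uses the convergence $\partial_z f_{B_k}\to\partial_z f_B$ contained in $C([0,T];C^1_b)$, the estimate for $\partial_t$ uses the convergence $\partial_t f_{B_k}\to\partial_t f_B$ contained in $W^{1,2}(0,T;C_b)$, and in both H\"older estimates the right-hand side passes to the limit by continuity of $s\mapsto s^\gamma$ on $[0,\infty[$. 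This yields exactly the three claimed inequalities.

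For the uniform bounds, $\|\partial_z f_B\|_{C([0,T];C_b)}\le c_2$ and $\|\partial_t f_B\|_{L^2(0,T;C_b)}\le c_4$ follow identically: Lemma~\ref{ES1} provides these bounds for every $f_{B_k}$ with constants independent of~$k$, and they survive the strong limit by continuity of the norm. The top-order spatial bound is the only place where strong convergence is not at hand; there I would instead invoke the weak-$*$ lower semicontinuity of $\|\cdot\|_{L^\infty(0,T;L^\beta)}$ along the subsequence $f_{B_k}\overset{*}{\rightharpoonup} f_B$, exactly as in the proof of Theorem~\ref{GWS}, to obtain $\|D_z^2 f_B\|_{L^\infty(0,T;L^\beta)}\le\liminf_{k\to\infty}\|D_z^2 f_{B_k}\|_{L^\infty(0,T;L^\beta)}\le c_6$.

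I do not anticipate a genuine obstacle: the content of the argument is purely the bookkeeping observation that every norm occurring in Lemmata~\ref{LIP} and~\ref{ES1} is one in which the approximating solutions converge---strongly for the state and its first-order derivatives, weakly-$*$ for the second-order spatial derivative. The only points requiring a moment's care are that the limit on the right-hand side of the H\"older estimates relies on continuity of $s\mapsto s^\gamma$, and that the semicontinuity step for $D_z^2 f_B$ must be carried out along the weak-$*$ convergent subsequence rather than the full sequence.
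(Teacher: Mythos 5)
Your proposal is correct and is precisely the approximation argument the paper has in mind: the paper dismisses the proof as obvious ("the Lipschitz estimates of Lemma \ref{LIP} hold true for the strong solutions by approximation"), and you have simply written out that argument, including the two points that genuinely need care (uniqueness of the strong solution to identify the limit of $f_{B_k}$ for an arbitrary approximating sequence, and weak-$*$ lower semicontinuity for the $c_6$-bound, which is exactly how Theorem \ref{GWS} handles it). Note only that the $c_6$-estimate in the corollary should read $\|D_z^2 f_B\|_{L^\infty(0,T;L^\beta)} \le c_6$, consistent with Lemma \ref{ES1}, as your proof correctly assumes.
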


The proof of this Corollary is obvious. I states that the field-state operator is globally Lipschitz-continuous with respect to the norm on $C\big([0,T];C_b\big)$ and globally Hölder-continuous with exponent $\gamma=\gamma(\beta)$ with respect to the norm on $W^{1,2}\big(0,T;C_b\big)$ and the norm on $C\big([0,T];C^1_b\big)$. \pskip

The following proposition provides (weak) compactness of the field-state operator that will be very useful in terms of variational calculus.

\begin{proposition}
\label{COMP}
Let $(B_k)_{k\in\NN} \subset \BB$ be a sequence that is converging weakly in $\WW$ to some limit $B\in\BB$. Then it has a subsequence $(B_{k_j})$ of $(B_k)$ such that
\begin{align*}
f_{B_{k_j}} &\rightharpoonup f_B \quad\text{in}\;\; W^{1,2}(0,T;L^p)\cap L^2(0,T;W^{1,p}) \cap L^2(0,T;W^{2,\beta}),\, 1\le p <\infty,\\
f_{B_{k_j}} &\to f_B \quad\text{in}\;\; L^2([0,T]\times\RR^6)
\end{align*}
if $j$ tends to infinity.
\end{proposition}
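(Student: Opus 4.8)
The plan is to extract limits from the uniform a priori bounds, to upgrade one of them to a strong $L^2$-limit by an Aubin--Lions compactness argument, and finally to pass to the limit in the Vlasov equation so as to identify the limit with $f_B$ via the uniqueness statement of Proposition~\ref{UNIQS}.

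First I would collect the uniform bounds. By Corollary~\ref{WLIP} and Lemma~\ref{ES1}, together with the fact that every $f_{B_k}(t)$ is supported in the fixed ball $B^6_R(0)$, the sequence $(f_{B_k})$ is bounded in $W^{1,2}(0,T;L^p)$ and in $L^2(0,T;W^{1,p})$ for every $1\le p<\infty$, and (since $\|D_z^2 f_{B_k}\|_{L^\infty(0,T;L^\beta)}\le c_6$ and the support is compact) also in $L^2(0,T;W^{2,\beta})$. As all of these are reflexive Banach spaces, a diagonal extraction over $p\in\NN$ yields a subsequence $(f_{B_{k_j}})$ and a single function $g$ with $f_{B_{k_j}}\wto g$ in each of the spaces appearing in the statement; uniqueness of weak limits ensures that $g$ is the same in all of them.

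Next I would produce the strong limit. Here I would invoke the Aubin--Lions--Simon lemma on the bounded domain $B^6_R(0)\subset\RR^6$: the family is bounded in $L^2(0,T;W^{1,2}(B^6_R(0)))$, the time derivatives $\delt f_{B_{k_j}}$ are bounded in $L^2(0,T;L^2(B^6_R(0)))$ by the $c_4$-estimate, and the embedding $W^{1,2}(B^6_R(0))\hookrightarrow L^2(B^6_R(0))$ is compact by Rellich--Kondrachov. Hence $(f_{B_{k_j}})$ is relatively compact in $L^2(0,T;L^2(B^6_R(0)))=L^2([0,T]\times\RR^6)$ (the last identification using the uniform compact support), so after passing to a further subsequence $f_{B_{k_j}}\to g$ strongly in $L^2([0,T]\times\RR^6)$, and the strong limit necessarily coincides with the weak limit $g$.

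It remains to identify $g$ with $f_B$, and this is where the main difficulty lies. I would pass to the limit in the weak form of the Vlasov equation
\[ \delt f_{B_{k_j}} + v\cdot\delx f_{B_{k_j}} - \delx\psi_{f_{B_{k_j}}}\cdot\delv f_{B_{k_j}} + (v\times B_{k_j})\cdot\delv f_{B_{k_j}} = 0 \]
tested against $\varphi\in C_c^\infty([0,T]\times\RR^6)$. The linear terms $\delt f_{B_{k_j}}$ and $v\cdot\delx f_{B_{k_j}}$ pass to the limit directly by weak convergence. The two bilinear terms are the obstacle, because $\delv f_{B_{k_j}}$ converges only weakly. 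For the field term I would use that strong $L^2$-convergence of $f_{B_{k_j}}$ yields strong convergence of $\rho_{f_{B_{k_j}}}$ and hence, by boundedness of the operator $\delx\psi.$ from Lemma~\ref{NPOT}, strong convergence $\delx\psi_{f_{B_{k_j}}}\to\delx\psi_g$ in $L^2$; writing $\delx\psi_f\cdot\delv f=\delv\cdot(\delx\psi_f\,f)$ (valid since $\delx\psi_f$ is $v$-independent) and integrating by parts moves the derivative onto $\varphi$, leaving the product of two strongly convergent $L^2$-factors. The magnetic term is the hardest, since both $B_{k_j}$ and $\delv f_{B_{k_j}}$ converge only weakly; the crucial observation is that $\delv\cdot(v\times B)=0$, so integrating $(v\times B_{k_j})\cdot\delv f_{B_{k_j}}$ by parts in $v$ produces $-\int (v\times B_{k_j})\cdot\delv\varphi\, f_{B_{k_j}}$, a strong($f_{B_{k_j}}$)-times-weak($B_{k_j}$) pairing against a fixed smooth compactly supported factor, which converges to the desired limit (note $B_{k_j}\wto B$ in $\WW$ restricts to weak convergence in $L^2([0,T]\times B_R(0))$). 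Consequently $g$ satisfies the Vlasov equation for $B$; together with the bounds and compact support already established, $g$ is a strong solution in the sense of Definition~\ref{WKS}, so Proposition~\ref{UNIQS} forces $g=f_B$. Since this limit is independent of the chosen subsequence, the full extracted subsequence converges as claimed.
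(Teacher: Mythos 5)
Your overall strategy is the same as the paper's: uniform a priori bounds, extraction of weak limits by a diagonal argument, upgrade to strong $L^2$-convergence by compactness on the fixed support, passage to the limit in the weak formulation (handling the two bilinear terms by moving the $v$-derivative onto the test function, which is exactly what the paper does with its operator $\mathbf V$, using $\delv\cdot(v\times B)=0$ and the $v$-independence of $\delx\psi_f$), and identification of the limit via Proposition \ref{UNIQS}. Replacing the paper's route to strong convergence (weak convergence in $H^1(]0,T[\times\RR^6)$ via Lemma \ref{SOBBOCH}, then Rellich--Kondrachov on the bounded space-time cylinder) by Aubin--Lions on $B_R^6(0)$ is a legitimate cosmetic variation, and your weak-strong pairing arguments for the nonlinear terms are correct.

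There is, however, a genuine gap at the final step: you invoke Proposition \ref{UNIQS} after checking only three of the four conditions of Definition \ref{WKS}. Condition (iii), the initial condition $g\big\vert_{t=0}=\mathring f$, is never verified, and it is precisely what the uniqueness proof needs --- Proposition \ref{UNIQS} runs a Gronwall argument on $\|f(t)-g(t)\|_{L^2}^2$ starting from $\|f(0)-g(0)\|_{L^2}=0$, so without (iii) the conclusion $g=f_B$ simply does not follow (indeed, strong solutions with different initial data satisfy all your other conditions). Your limit passage in the equation cannot supply this information: testing against functions compactly supported in the open time interval (or discarding the temporal boundary terms, as your integration by parts implicitly does) says nothing about the trace at $t=0$. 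The repair is short: since $f_{B_{k_j}}\rightharpoonup g$ in $W^{1,2}(0,T;L^2)$ and the evaluation map $u\mapsto u(0)$ is linear and bounded from $W^{1,2}(0,T;L^2)$ to $L^2(\RR^6)$, it is weakly continuous, whence $g(0)=\text{w-}\lim f_{B_{k_j}}(0)=\mathring f$; the paper achieves the same via Mazur's lemma. A second, smaller slip: your diagonal extraction ``over all reflexive spaces'' is not available at $p=1$ (and non-integer $p$), since $W^{1,2}(0,T;L^1)$ is not reflexive; the paper extracts weak limits only for integer exponents $m\ge 2$ and then recovers the full range $1\le p<\infty$ from the uniform compact support, and the support condition (iv) for \emph{every} $t\in[0,T]$ (not just a.e.) likewise requires the embedding $W^{1,2}(0,T;L^p)\subset C([0,T];L^p)$.
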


\begin{proof} Suppose that $(B_k)_{k\in\NN} \subset \BB$ and $B\in\BB$ such that $B_k \rightharpoonup B$ in $\WW$. By Theorem~\ref{GWS}, $f_k:=f_{B_k}$ is bounded in $W^{1,2}(0,T;L^p) \cap L^2(0,T;W^{1,p}\cap W^{2,\beta})$ for every $1\le p \le\infty$. Note that this bound can be chosen independent of $p$. Hence the Banach-Alaoglu theorem and Cantor's diagonal argument imply that $(f_k)$ converges weakly in $W^{1,2}(0,T;L^m) \cap L^2(0,T;W^{1,m}\cap W^{2,\beta})$,\linebreak for every integer $m \ge 2$, up to a subsequence. Thus there exists some function \linebreak $f\in W^{1,2}(0,T;L^m) \cap L^2(0,T;W^{1,m}\cap W^{2,\beta})$ for every integer $m\ge 2$ such that
\begin{align*}
f_k \rightharpoonup f \;\;\text{in}\;\; W^{1,2}(0,T;L^m)\cap L^2(0,T;W^{1,m}) \cap L^2(0,T;W^{2,\beta}),\;\; m\in\NN, m\ge 2.
\end{align*}
Thus, by interpolation, $f\in W^{1,2}(0,T;L^p)\cap L^2(0,T;W^{1,p}) \cap L^2(0,T;W^{2,\beta})$ for $2\le p <\infty$. We will now show that $f$ is a strong solution to the field $B$ by verifying the conditions from Definition \ref{WKS}. \pskip

\textit{Condition }(iv): Let $\eps>0$ be arbitrary. We will now assume that there exists some measurable set $M\subset [0,T]\times\big( \RR^6\setminus \BR \big)$ with Lebesgue-measure $\lambda(M)>0$ such that $f>\eps$ almost everywhere on $M$. Then
\begin{align*}
0 < \eps \lambda(M) < \int\limits_M f \;\mathrm d(t,z) = \int\limits_M f-f_k \;\mathrm d(t,z) = \int (f-f_k) \mathds{1}_{M} \;\mathrm d(t,z) \to 0
\end{align*}
as $k\to\infty$ which is a contradiction. The case $f<-\eps$ can be treated analogously. Hence $-\eps<f<\eps$ almost everywhere on $[0,T]\times\big( \RR^6\setminus \BR \big)$ which immediately yields $f=0$ almost everywhere on $[0,T]\times\big( \RR^6\setminus \BR \big)$ because $\eps$ was arbitrary. Since $W^{1,2}(0,T;L^p)$ is continuously embedded in $C([0,T];L^p)$ by Sobolev's embedding theorem, we have $\supp f(t) \subset \BR$ even for all $t\in[0,T]$. \pskip

\textit{Condition }(i): The fact that $\supp f(t) \subset B_R(0)$ for all $t\in[0,T]$ directly implies that 
\begin{align*}
f_k \rightharpoonup f \quad\text{in}\quad W^{1,2}(0,T;L^p)\cap L^2(0,T;W^{1,p})\cap L^2(0,T;W^{2,\beta}), \; 1\le p < \infty\,.
\end{align*}
The inequality $\|f\|_{W^{1,2}(0,T;L^p(\RR^6))} + \|f\|_{L^2(0,T;W^{1,p})} \le C$
where $C>0$ depends only on $\mathring f$, $T$, $K$ and $\beta$ follows directly from the weak convergence and the weak lower semicontinuity of the norm. Since $C$ does not depend on $p$ this inequality holds true for $p=\infty$.\pskip

\textit{Condition }(iii): It holds that $f_k \rightharpoonup f$ in $W^{1,2}(0,T;L^2)$ with $f_k(0) = \mathring f$ almost everywhere on $\RR^6$ for all $k\in\NN$. By Mazur's lemma we can construct some sequence $(f_k^*)_{k\in\NN}$ such that $f_k^*\to f$ in $W^{1,2}(0,T;L^2)$ where for any $k\in\NN$, $f_k^*$ is a convex combination of $f_1,...,f_k$. Then of course $f_k^*(0)= \mathring f$ almost everywhere on $\RR^6$ as well and hence
\vspace{-0.1cm}
\begin{align*}
\|f(0)-\mathring f\|_{L^2} &= \|f(0)-f_k^*(0)\|_{L^2} \le C\; \|f-f_k^*\|_{W^{1,2}(0,T;L^2)} \to 0, \quad k\to\infty\;.\\[-0.75cm]
\end{align*}
Thus $f(0)=\mathring f$ almost everywhere on $\RR^6$.\pskip

\textit{Condition }(ii): We know that $f_k\rightharpoonup f$ in $W^{1,2}(0,T;L^2)\cap L^2(0,T;W^{1,2})$ that is $ H^1(]0,T[\times\RR^6)$ due to Lemma \ref{SOBBOCH}. Then, because of the compact support, the Rellich-Kondrachov theorem implies that $f_k\to f$ in $L^2([0,T]\times\RR^6)$, up to a subsequence. From Lemma \ref{NPOT}\,(b) we can conclude that for any $t\in[0,T]$,
\begin{align*}
&\|\delx\psi_{f}(t) - \delx\psi_{f_k}(t)\|_{L^2(\BR)} \le C\; \|f(t)-f_k(t)\|_{L^2} \to 0,\quad k\to\infty\;.
\end{align*}
For brevity, we will now use the notation
\begin{align*}
\mathbf V( \varphi,f,B) := \delt  \varphi + v\cdot\delx  \varphi - \delx\psi_f\cdot \delv  \varphi + (v\times B)\cdot\delv \varphi\;.
\end{align*}
Let $ \varphi\in C_c^\infty(]0,T[\times \RR^6))$ be an arbitrary test function. Then $\mathbf V( \varphi,f_k,B_k)$ is bounded in $L^2(]0,T[\times\BR)$ uniformely in $k$ (the bound may depend on $\varphi$). It also holds that
$$\mathbf V( \varphi,f,B_k) - \mathbf V( \varphi,f_k,B_k) \to 0, \quad k\to \infty \quad\text{in}\; L^2(]0,T[\times\BR)$$ 
since $\psi_{f_k}\to\psi_f$ in $L^2\big(]0,T[\times \BR\big)$. Moreover, 
$$ \mathbf V( \varphi,f,B) -\mathbf V( \varphi,f,B_k) \wto 0,\quad k\to \infty \quad\text{in}\; L^2(]0,T[\times\BR).$$
By integration by parts,
\begin{align*}
&\int\limits_0^T\hspace{-5pt}\int \mathbf V(f,f,B)\; \varphi \dz\,\mathrm dt   = \int\limits_0^T\hspace{-5pt}\int f\;\mathbf V( \varphi,f,B) - f_k\;\mathbf V( \varphi,f_k,B_k) \dz\,\mathrm dt 
\end{align*}
and the right-hand side converges to zero as $k\to\infty$. As $\varphi$ was arbitrary this implies that $\mathbf V(f,f,B) = 0$ almost everywhere on $[0,T]\times\RR^6$ that is (ii).\pskip

Consequently $f$ is a strong solution to the field $B$ and thus $f=f_B$ because of uniqueness. Furthermore we have showed that there exists a subsequence $(B_{k_j})$ of $(B_k)$ such that $(f_{B_{k_j}})$ is converging in the demanded fashion. \end{proof}

\section{A general inhomogenous linear Vlasov equation}

Since a Fr\'echet derivative is a linear approximation, we will find out later that the derivative of the field-state operator is determined by an inhomogenous linear Vlasov equation. In this section we will analyze those linear Vlasov equations in general, i.e., we will establish some existence and uniqueness results. The type and the regularity of the solution will depend on the regularity of the coefficients.\pskip 

Let $r_0\ge 0$ and $r_2>r_1 \ge 0$ be arbitrary. We consider the following inhomogenous linear version of the Vlasov equation:
\begin{displaymath}
\label{LVL}
\refstepcounter{equation}
\delt f + v\scdot\delx f + \MA\scdot\delv f + (v\stimes \MB )\scdot\delv f = \delx\psi_f\scdot\MC + \Mchi\,\PHI_{\Ma,f} + \Mb,\;\; 
f\big\vert_{t=0} = \Mf\;\; \textnormal{(\theequation)}
\end{displaymath}
The coefficients are supposed to have the following regularity
\begin{align}
&\label{RCa} \Ma=\Ma(t,x,v) \in C\big([0,T];C^1_b(\RR^6)\big),	\\
&\label{RCb} \Mb=\Mb(t,x,v) \in C\big([0,T];C^1_b(\RR^6)\big),	\\
&\label{RCf} \Mf=\Mf(x,v) \in C^2_c(\RR^6), \\
&\label{RCA} \MA=\MA(t,x)  \in C\big([0,T];C^{1,\gamma}(\RR^3;\RR^3)\big),	\\
&\label{RCB} \MB=\MB(t,x)  \in C\big([0,T];C^{1,\gamma}(\RR^3;\RR^3)\big),	\\
&\label{RCC} \MC=\MC(t,x,v) \in C\big(0,T;C^1_b(\RR^6;\RR^3)\big), \\
&\label{RCchi} \Mchi=\Mchi(x,v) \in C^1_c(\RR^6;[0,1])
\end{align}
with\vspace{-3mm}
\begin{gather}
\label{SC1} \supp \Ma(t),\; \supp \Mb(t),\; \supp\Mf,\; \supp \MC(t) \subset \Bro,\quad t\in[0,T],\\
\label{SC2} \Mchi = 1 \;\;\text{on}\;\; B_{r_1}(0),\quad \supp \Mchi  \subset B_{r_2}(0)
\end{gather}
Moreover $\PHI_{\Ma,f}$ is given by
\begin{align}
\label{DEFPHI}
\PHI_{\Ma,f}(t,x) := -\iint \frac{x-y}{|x-y|^3}\cdot\delv \Ma(t,y,w) \, f(t,y,w) \;\mathrm dw \mathrm dy 
\end{align}
for all $(t,x)\in[0,T]\times\RR^3$. We will also use the notation
\begin{align}
\label{DEFPHI2}
\PHI_{\Ma,f}'(t,x) := - \iint \frac{x-y}{|x-y|^3}\cdot\Big( \delv \Ma \, \delx f- \delv f \, \delx \Ma \Big)(t,y,w) \;\mathrm dw \mathrm dy\;.
\end{align}
for $(t,x)\in[0,T]\times\RR^3$. Note that
\begin{align*}
\PHI_{\Ma,f} = \sum_{i=1}^3 \delxi \psi_{\delvi \Ma f} \tand \big[\PHI_{\Ma,f}'\big]_j = \sum_{i=1}^3 \delxi \psi_{\delvi \Ma\,\delxj f - \delvi f\,\delxj \Ma},\;\; j=1,2,3.
\end{align*}
As $\Ma \in C\big([0,T];C^1_b(\RR^6)\big)$ with compact support $\supp \Ma(t) \subset \Bro$ for all ${t\in[0,T]}$, Lemma \ref{NPOT} provides the following inequalities: For any $r>0$ there exists some constant $c>0$ that may depend only on $r$ and $r_0$ such that for almost all $t\in[0,T]$,
\begin{align}
&\label{ESTPHI1}	\|\PHI_{\Ma,f}(t)\|_{L^2(B_r(0))} \le c \|\delv a(t)\|_\infty\, \|f(t)\|_{L^2(\Bro)}, && f\hspace{-2pt}\in L^2(0,T;L^2),\\
&\label{ESTPHI2}	\|\PHI_{\Ma,f}'(t)\|_{L^2(B_r(0))} \le c\|\delz a(t)\|_\infty\, \|\delz f(t)\|_{L^2(\Bro)}, \hspace{-7pt}&& f\hspace{-2pt}\in L^2(0,T;H^1),\\
&\label{ESTPHI3}	\|\PHI_{\Ma,f}(t)\|_{L^\infty} \le c \|\delv a(t)\|_\infty\, \|f(t)\|_{L^\infty(\Bro)}, && f\hspace{-2pt}\in L^2(0,T;L^\infty),\\
&\label{ESTPHI4}	\|\PHI_{\Ma,f}'(t)\|_{L^\infty} \le c \|\delz a(t)\|_\infty\, \|\delz f(t)\|_{L^\infty(\Bro)}, && f\hspace{-2pt}\in L^2(0,T;W^{1,\infty}).
\end{align}
If $\Ma \in C\big([0,T];C^2_b(\RR^6)\big)$ and $f\in C\big([0,T];C^1_b(\RR^6)\big)$ then $\PHI_{\Ma,f}$ is continuously differentiable with respect to $x$ with
\begin{align*}
\delxj \PHI_{\Ma,f}(t,x) &= \sum_{i=1}^3 \delxj \delxi \psi_{\delvi \Ma f} 
= \sum_{i=1}^3 \delxi \psi_{\delvi \Ma\, \delxj f - \delxj \Ma\, \delvi f} = \big[\PHI_{\Ma,f}'\big]_j(t,x)
\end{align*}
for all $(t,x)\in [0,T]\times\RR^3$. Because of density this result holds true if \linebreak$\Ma \in C\big([0,T];C^1_b(\RR^6)\big)$. If merely $f\in L^2(0,T;H^1)$ the result holds true in the weak sense. 

\smallskip

%
%

\begin{lemma}
	\label{LVLZ}
	\hypertarget{HLVLZ}
	Let $A,B\in C\big([0,T];C^1_b(\RR^3;\RR^3)\big)$ be arbitrary. For any ${t\in[0,T]}$ and $z\in\RR^6$ the characteristic system
	\begin{align*}
	\dot x = v, \quad
	\dot v = \MA(s,x)+v\times \MB(t,x)\;,
	\end{align*}
	has a unique solution $Z\in C^1([0,T]\times[0,T]\times\RR^6;\RR^6)$, $Z(s,t,z)=(X,V)(s,t,z)$ to the initial value condition $Z(t,t,z)=z$. For any $r>0$ and all $s,t\in [0,T]$,
	$$Z(s,t,B_r(0)) \subset B_{\zeta(r)}(0) \twith \zeta(r) :=  \ee^{2T} \big(r + \sqrt{T} \|\MA\|_{L^2(0,T;L^\infty)} \big) \;.$$
	Moreover, there exists some constant $C(r)\hspace{-2pt}>\hspace{-2pt}0$ depending only on $\|\MA\|_{L^2(0,T;C^1_b)}$, $\|\MB\|_{L^2(0,T;C^1_b)}$ and $r$ such that for all $s,t\in[0,T]$,
	\begin{align*}
	\|\delz Z(s,t,\cdot)\|_{L^\infty(B_r(0))} \le C(r) \tand \|\delt Z(s,t,\cdot)\|_{L^\infty(B_r(0))} \le C(r)\;.
	\end{align*}
\end{lemma}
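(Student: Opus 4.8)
The plan is to handle the three assertions in turn, leaning throughout on the general characteristic theory of Lemma~\ref{CHS}. For existence, uniqueness and $C^1$-regularity of $Z$ I would simply verify the hypotheses of that lemma: since $\MA,\MB\in C\big([0,T];C^1_b(\RR^3;\RR^3)\big)$, the continuity of $s\mapsto\MA(s)$ into $C^1_b$ makes $(s,x)\mapsto\MA(s,x)$ and $(s,x)\mapsto\delx\MA(s,x)$ jointly continuous, and they are bounded on $[0,T]\times\RR^3$ because $\sup_{s\in[0,T]}\|\MA(s)\|_{C^1_b}<\infty$; the same holds for $\MB$. Thus Lemma~\ref{CHS} with $F=\MA$, $G=\MB$, $I=[0,T]$ already yields the unique solution $Z\in C^1([0,T]\times[0,T]\times\RR^6;\RR^6)$ with $Z(t,t,z)=z$, and the first claim needs nothing further.

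For the spatial bound the essential point is that the Lorentz term is orthogonal to the velocity, which is exactly why $\MB$ is absent from $\zeta(r)$. Writing $Z(s)=(X(s),V(s))$ and $N(s):=|Z(s)|$, and using $V\cdot(V\times\MB)=0$ together with $\dot X=V$, I would estimate
\[
\dds N(s)^2 = 2\,X(s)\cdot V(s) + 2\,V(s)\cdot\MA(s,X(s)) \le N(s)^2 + 2\,N(s)\,\|\MA(s)\|_\infty ,
\]
so that $\dds N(s)\le\tfrac12 N(s)+\|\MA(s)\|_\infty$. Gronwall's lemma, the initial value $N(t)=|z|\le r$ for $z\in\Br$, and the Cauchy--Schwarz bound $\int_0^T\|\MA(\tau)\|_\infty\dtau\le\sqrt T\,\|\MA\|_{L^2(0,T;L^\infty)}$ then give $N(s)\le\ee^{T/2}\big(r+\sqrt T\,\|\MA\|_{L^2(0,T;L^\infty)}\big)\le\zeta(r)$ for all $s,t\in[0,T]$, which is the asserted inclusion (the factor $\ee^{2T}$ being a convenient overestimate of $\ee^{T/2}$).

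For the derivative bounds I would write the right-hand side as $\mathcal F(s,x,v):=\big(v,\MA(s,x)+v\times\MB(s,x)\big)$ and differentiate the characteristic system in $z$ to obtain the linear variational equation $\dds\,\delz Z=D_z\mathcal F(s,Z)\,\delz Z$ with $\delz Z(t,t,z)=\mathrm{Id}$. Once the bound $|V(s)|\le\zeta(r)$ is used to control the factor of $|V|$ multiplying $\delx\MB$, the matrix $D_z\mathcal F(s,Z(s))$ has operator norm at most $C(r)\big(1+\|\MA(s)\|_{C^1_b}+\|\MB(s)\|_{C^1_b}\big)$ on $\Br$, and Gronwall gives
\[
\|\delz Z(s,t,\cdot)\|_{L^\infty(\Br)}\le\exp\!\Big(C(r)\!\int_0^T\big(1+\|\MA(\tau)\|_{C^1_b}+\|\MB(\tau)\|_{C^1_b}\big)\dtau\Big),
\]
whose exponent is finite and, by Cauchy--Schwarz, controlled by $\|\MA\|_{L^2(0,T;C^1_b)}$ and $\|\MB\|_{L^2(0,T;C^1_b)}$. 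For $\delt Z$ I would exploit the flow structure: differentiating $Z(t,t,z)=z$ in $t$ gives $\delt Z(t,t,z)=-\dot Z(t,t,z)=-\mathcal F(t,z)$, and since $\delt Z$ solves the same homogeneous variational equation as the columns of $\delz Z$, I obtain the identity $\delt Z(s,t,z)=-\delz Z(s,t,z)\,\mathcal F(t,z)$. Combined with the previous estimate this gives $|\delt Z(s,t,z)|\le C(r)\,|\mathcal F(t,z)|\le C(r)\big(r+\|\MA(t)\|_\infty+r\,\|\MB(t)\|_\infty\big)$ on $\Br$.

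I expect the $\delt Z$-estimate to be the only genuinely nonroutine step: because $t$ is the base time at which the initial datum is prescribed, it cannot be reached by the variational flow directly, and the identity $\delt Z=-\delz Z\,\mathcal F(t,\cdot)$ is the device that reduces it to the already-controlled $\delz Z$-bound. I would also flag that this last estimate really sees the pointwise-in-time sizes $\|\MA(t)\|_\infty$ and $\|\MB(t)\|_\infty$ (finite because the fields are continuous into $C^1_b$), whereas the $\delz Z$-bound, arising from a time integral, depends only on the $L^2(0,T;C^1_b)$-norms via Cauchy--Schwarz.
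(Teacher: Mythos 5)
Your proposal is correct and, in substance, it is exactly the argument the paper has in mind: the paper does not print a proof of Lemma~\ref{LVLZ} at all, but defers to the proof of Lemma~\ref{ES1}, and your three steps reproduce that scheme --- Lemma~\ref{CHS} for existence, uniqueness and $C^1$-regularity; a Gronwall estimate for $|Z|$ in which the Lorentz term drops out because $V\cdot(V\times\MB)=0$ (the paper's version runs the same cancellation through the fundamental theorem of calculus plus the standard and quadratic Gronwall lemmas, yielding the cruder constant $\ee^{2T}$); and the variational equation plus Gronwall and Cauchy--Schwarz for $\delz Z$, which is precisely the derivation of the $c_1$-bound \eqref{ADZ} in Lemma~\ref{ES1}. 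The identity $\delt Z(s,t,z)=-\delz Z(s,t,z)\,\mathcal F(t,z)$ is your own addition, since Lemma~\ref{ES1} has no $\delt Z$-estimate (it controls the time derivative only at the level of $f_B$, via the Vlasov equation, and only in $L^2(0,T;C_b)$); it is a clean and correct device, as $\delt Z$ solves the same linear variational ODE as the columns of $\delz Z$ with initial value $\delt Z(t,t,z)=-\mathcal F(t,z)$.

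The discrepancy you flag in your last paragraph is real, and it is a defect of the lemma's statement rather than of your proof: the claimed bound $\|\delt Z(s,t,\cdot)\|_{L^\infty(\Br)}\le C(r)$ with $C(r)$ depending \emph{only} on $\|\MA\|_{L^2(0,T;C^1_b)}$, $\|\MB\|_{L^2(0,T;C^1_b)}$ and $r$ cannot hold. Your identity evaluated at $s=t$ gives $\delt Z(t,t,z)=-\mathcal F(t,z)$, hence
\begin{align*}
\sup_{s,t\in[0,T]}\|\delt Z(s,t,\cdot)\|_{L^\infty(\Br)} \;\ge\; \sup_{t\in[0,T]}\|\MA(t)\|_{L^\infty(\Br)}\,,
\end{align*}
and a tall narrow bump in time (say $\MA_n(t,x)=n\,\phi_n(t)\,\psi(x)$ with $\psi$ supported in $\Br$, $\|\phi_n\|_\infty=1$, $\|\phi_n\|_{L^2(0,T)}\sim n^{-2}$, $\MB=0$) has $\|\MA_n\|_{L^2(0,T;C^1_b)}\to 0$ while this supremum blows up. So the $\delt Z$-estimate is only available in one of two corrected forms, both of which your argument delivers: either with $C(r)$ depending additionally on the sup-in-time norms $\|\MA\|_{C([0,T];C_b)}$, $\|\MB\|_{C([0,T];C_b)}$ (finite under the hypothesis $\MA,\MB\in C([0,T];C^1_b)$, so the lemma remains true for \emph{some} constant), or as an $L^2$-in-time bound $\|\delt Z(s,\cdot,\cdot)\|_{L^2(0,T;L^\infty(\Br))}\le C(r)$ with the advertised $L^2$-only dependence, obtained by integrating your pointwise bound in $t$; the latter is the form in which time-derivative bounds are actually used downstream (compare the $c_4$-estimate of Lemma~\ref{ES1} and the treatment of $\delt f_k$ in Corollary~\ref{WSLVL}, where uniformity in the approximating fields $\MB_k$ is only available through their $L^2(0,T;\cdot)$-norms). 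The remaining loose ends in your write-up --- Gronwall backward in time for $s<t$, and differentiability of $N(s)=|Z(s)|$ at its zeros --- are standard and harmless.
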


The proof is simple and very similar to the proof of Lemma \ref{ES1}. Therefore it will not be presented.\pskip

Now we can establish an existence and uniqueness result for classical solutions of the system \eqref{LVL} if the regularity conditions \eqref{RCa}-\eqref{RCchi} hold. Unfortunately the coefficients of the systems that will occur in this paper do not satisfy those strong conditions. However, we will still be able to prove an existence and uniqueness result for strong solutions of \eqref{LVL} if the regularity conditions are slightly weaker.

%
%

\begin{proposition}
	\label{CSLVL}
	\hypertarget{HCSLVL}
	Suppose that the coefficients of the system \eqref{LVL} satisfy the regularity conditions \eqref{RCa}-\eqref{RCchi} and the support conditions \eqref{SC1},\eqref{SC2}. Then the initial value problem \eqref{LVL} has a unique classical solution ${f\in C^1([0,T]\times\RR^6)}$. Moreover for all $t\in[0,T]$, $\supp f(t) \subset B_{\zeta(r+1)}(0)$ with $r=\max\{r_0,r_2\}$ and $f$ is implicitely given by\vspace{-2mm}
	\begin{align}
	\label{EXPLVL}
	f(t,z) = \Mf\big( Z(0,t,z) \big) + \int\limits_0^t \big[\delx\psi_f\cdot\MC + \Mchi\PHI_{\Ma,f} + \Mb\big]\big(s,Z(s,t,z) \big)  \ds
	\end{align}
	for any $t\in[0,T],\;z\in\RR^6$.
	Moreover, there exists some constant $C>0$ depending only on $T$, $r_0$, $r_2$ and the standard norms of the coefficients such that $$\|f\|_{C^1_b([0,T]\times\RR^6)} \le C.$$ 
\end{proposition}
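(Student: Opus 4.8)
The plan is to prove existence and uniqueness of the classical solution to the inhomogeneous linear Vlasov equation \eqref{LVL} by a fixed-point argument built on the characteristic flow from Lemma \ref{LVLZ}. First I would fix the characteristic system $\dot x = v$, $\dot v = \MA(s,x) + v\times\MB(s,x)$ and obtain its flow $Z=(X,V)$ via Lemma \ref{LVLZ}, which is $C^1$ in all variables and satisfies the support-transport bound $Z(s,t,B_r(0))\subset B_{\zeta(r)}(0)$. Observe that along characteristics the full equation \eqref{LVL} reads $\tfrac{\mathrm d}{\mathrm ds} f(s,Z(s,t,z)) = \big[\delx\psi_f\cdot\MC + \Mchi\,\PHI_{\Ma,f} + \Mb\big](s,Z(s,t,z))$, so that integrating from $0$ to $t$ and using $f(0,\cdot)=\Mf$ yields precisely the implicit representation \eqref{EXPLVL}. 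The essential subtlety is that the right-hand side depends on $f$ itself nonlocally, through $\delx\psi_f$ and $\PHI_{\Ma,f}$, which are integral (Newtonian-potential) functionals of $f$; thus \eqref{EXPLVL} is genuinely a fixed-point equation rather than an explicit formula.

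Next I would set up the contraction. Define the map $\mathcal{T}$ sending a function $f$ to the right-hand side of \eqref{EXPLVL}, and run it on the Banach space $C([0,\delta];C_b(\RR^6))$ for a sufficiently short time $\delta>0$. The support claim is immediate from the structure: since $\Mf$, $\MC$, $\Mchi$ all have support in $B_r(0)$ with $r=\max\{r_0,r_2\}$ and the flow moves $B_{r+1}(0)$ into $B_{\zeta(r+1)}(0)$, any iterate of $\mathcal{T}$ vanishes outside $B_{\zeta(r+1)}(0)$, giving the stated support bound. For the contraction estimate I would compare $\mathcal{T}f - \mathcal{T}g$ and bound the difference of the source terms: for $\delx\psi_{f}-\delx\psi_{g}=\delx\psi_{f-g}$ use the $L^\infty$-bound of Lemma \ref{NPOT}(c), namely $\|\delx\psi_{f-g}(t)\|_\infty \le C\|f(t)-g(t)\|_\infty$ (both $f,g$ supported in a fixed ball), and for $\PHI_{\Ma,f}-\PHI_{\Ma,g}=\PHI_{\Ma,f-g}$ use the $L^\infty$-estimate \eqref{ESTPHI3}. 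Since $\MC$, $\Mchi$, $\Mb$ are bounded, this gives $\|\mathcal{T}f-\mathcal{T}g\|_{C([0,\delta];C_b)} \le C\delta\,\|f-g\|_{C([0,\delta];C_b)}$, so $\mathcal{T}$ is a contraction for $\delta$ small, producing a unique continuous fixed point $f$ on $[0,\delta]$. Because the constant $C$ does not depend on the initial time, the same $\delta$ works at each step and the solution extends to all of $[0,T]$ by iterating finitely many times.

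It then remains to upgrade this continuous fixed point to a genuine classical solution and to establish the $C^1_b$-bound. The plan is a bootstrap: with $f\in C([0,T];C_b)$ already known, Lemma \ref{NPOT}(c) gives $\delx\psi_f\in C^{0,\gamma}$ and \eqref{ESTPHI3}--\eqref{ESTPHI4} control $\PHI_{\Ma,f}$ and $\PHI_{\Ma,f}'$, so the integrand in \eqref{EXPLVL} is Hölder continuous in space; combined with the $C^1$-regularity of $Z$ from Lemma \ref{LVLZ} this makes the right-hand side of \eqref{EXPLVL} continuously differentiable, whence $f\in C^1$. Differentiating \eqref{EXPLVL} in $z$ and using the chain rule, the $\delz Z$-bounds of Lemma \ref{LVLZ}, the derivative identity $\delxj\PHI_{\Ma,f}=[\PHI_{\Ma,f}']_j$ recorded before the lemma, and estimate \eqref{ESTPHI4}, I would set up a Gronwall inequality for $\|\delz f(t)\|_\infty$ closing on itself and yielding a bound depending only on $T$, $r_0$, $r_2$ and the coefficient norms; together with the Vlasov equation itself expressing $\delt f$ in terms of the other (now bounded) quantities, this gives $\|f\|_{C^1_b([0,T]\times\RR^6)}\le C$. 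Finally, differentiating \eqref{EXPLVL} twice and using $\Mf\in C^2_c$ together with the higher regularity of $Z$ confirms that the fixed point is indeed a classical $C^1$ solution satisfying \eqref{LVL} pointwise. The main obstacle I anticipate is not the contraction itself but the differentiability bootstrap: because $\delx\psi_f$ and $\PHI_{\Ma,f}$ are only Hölder (not Lipschitz) in $x$, one must be careful that differentiating \eqref{EXPLVL} through these nonlocal terms is legitimate, and this is exactly where the identity $\delxj\PHI_{\Ma,f}=[\PHI_{\Ma,f}']_j$ and the weak-form remark preceding Lemma \ref{LVLZ} do the crucial work.
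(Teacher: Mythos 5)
Your opening steps (the characteristic representation \eqref{EXPLVL}, the invariant support ball, a Picard/fixed-point scheme on it, Gronwall bounds, and uniqueness via the fact that every classical solution must satisfy \eqref{EXPLVL}) follow the same skeleton as the paper, which also iterates the right-hand side of \eqref{EXPLVL}. The genuine gap is in your regularity bootstrap. Knowing only $f\in C([0,T];C_b)$ with compact support, the integrand in \eqref{EXPLVL} is merely H\"older continuous in $x$ (Lemma \ref{NPOT}(c)), and H\"older continuity of the integrand does \emph{not} allow you to differentiate the right-hand side of \eqref{EXPLVL} in $z$: composing with the $C^1$ flow and integrating in $s$ preserves H\"older continuity and nothing more. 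Worse, every tool you invoke to close the Gronwall inequality for $\|\delz f(t)\|_\infty$ presupposes the very derivative you are trying to construct: estimate \eqref{ESTPHI4} is stated for $f\in L^2(0,T;W^{1,\infty})$; the identity $\delxj\PHI_{\Ma,f}=[\PHI_{\Ma,f}']_j$ is established in the paper only for $f\in C([0,T];C^1_b)$ (and weakly for $f\in L^2(0,T;H^1)$, which your fixed point is also not known to satisfy), and indeed $\PHI_{\Ma,f}'$ is \emph{defined} through $\delx f$ and $\delv f$; finally, an $L^\infty$ bound on $D_x^2\psi_f$, needed to differentiate the term $\delx\psi_f\cdot\MC$, is not available for merely continuous densities --- compare \eqref{AD2PSI}, which involves $\ln_+\|\delx\rho_{f}\|_\infty$. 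So the bootstrap is circular, and a uniform $C^0$ limit of $C^1$ iterates need not be $C^1$.

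The paper's proof shows how to avoid this: propagate $C^1$ regularity through the iteration instead of trying to recover it afterwards. Each iterate $f_{n+1}$ is built from the nonlocal terms evaluated at the \emph{previous} iterate $f_n$, which is $C^1$ by induction, so $\delx\psi_{f_n}$ and $\PHI_{\Ma,f_n}$ are legitimately differentiable and $f_{n+1}\in C^1$; the Cauchy estimate is then proved for $M_{m,n}(t)=\max\{\|f_m(t)-f_n(t)\|_\infty,\|\delt f_m(t)-\delt f_n(t)\|_\infty,\|\delz f_m(t)-\delz f_n(t)\|_\infty\}$, i.e.\ the derivatives are carried along in the norm, and the bound $M_{n+1,n}(t)\le c_*T^n/n!$ gives convergence in $C^1_b([0,T]\times\RR^6)$ on the whole interval at once (no short-time restriction is needed), so the limit is $C^1$ and the $C^1_b$ bound comes for free. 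The minimal repair of your scheme is the same idea: run the contraction (or the short-time fixed point) in $C([0,\delta];C^1_b)$, including $\delz f$ and $\delt f$ in the fixed-point norm, rather than in $C([0,\delta];C_b)$; in the $C^0$ topology alone the argument cannot be completed.
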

\newpage
\begin{remark} 
	$\;$
	\begin{itemize}
		\itema	If we use a final value condition $f\big|_{t=T}=\Mf$ instead of the initial value condition $f\big|_{t=0}=\Mf$ the problem can be treated completely analogously. The results of Proposition~\ref{CSLVL} and Corollary~\ref{WSLVL} hold true in this case. Only the implicit depiction of a classical solution must be replaced by
		\begin{align}
		\label{EXPLVL2}
		f(t,z) = \Mf\big( Z(T,t,z) \big) - \int\limits_t^T \big[\delx\psi_f\cdot\MC + \Mchi\PHI_{\Ma,f} + \Mb\big]\big(s,Z(s,t,z) \big)  \ds
		\end{align}
		\itemb Suppose that $\MC=0$ and recall that $\PHI_{\Ma,f}$ depends only on $f\big|_{\Bro}$. Hence, if we choose $r_1 = \zeta(r_0)$ then for all $t\in [0,T]$ and $z\in \Bro$,
		\begin{align}
		\label{EXPLVL3}
		f(t,z) = \Mf\big( Z(0,t,z) \big) + \int\limits_0^t \big[\PHI_{\Ma,f} + \Mb\big]\big(s,Z(s,t,z) \big)  \ds
		\end{align}
		because in this case $\Mchi\big(Z(s,t,z)\big) = 1$ as $Z(s,t,\Bro) \subset B_{r_1}(0)$. This means that the values of $f\big\vert_\Bro$ do not depend on the choice of $\Mchi$ as long as \eqref{RCchi} and \eqref{SC2} hold.
	\end{itemize}
\end{remark}

\begin{proof}\hspace{-2pt}\textit{of Proposition \ref{CSLVL}}\;
	Let $c>0$ denote a generic constant depending only on $r_0$, $r_2$, $T$ and the norms of the coefficients. For $t\in[0,T]$ and $z\in \RR^6$ let ${Z=(X,V)(s,t,z)}$ denote the solution of the characteristic system with $Z(t,t,z)=z$. Moreover, for $t\in[0,T]$ and $z\in\RR^6$, we define a recursive sequence by $f_0(t,z) := \Mf(z)$ and
	\begin{align*}
	f_{n+1}(t,z) &:= \Mf(Z(0,t,z)) + \int\limits_0^t \big[ \delx\psi_{f_n}\cdot\MC +\Mchi\PHI_{\Ma,f_n} + \Mb \big]\big(s,Z(s,t,z)\big)  \ds.
	\end{align*}
	By induction we can conclude that all $f_n$ are continuous. Then for any fixed $\tau\in[0,T]$ and $n\in\NN$ the functions $\Mf$, $\big[\delx\psi_{f_n}\cdot\MC\big](\tau)$, $\big[\Mchi\PHI_{\Ma,f_n}\big](\tau)$ and $\Mb(\tau)$ are continuous and compactly supported in $B_{r}(0)$ with $r=\max\{r_0,r_2\}$. This directly implies that $f_0(t)$ is compactly supported with $\supp f_0(t)\subset B_{r}(0)$ for all $t\in[0,T]$. Moreover, for any $\tau\in[0,T]$, Lemma \ref{LVLZ} implies that
	\begin{align*}
	\left.
	\begin{aligned}
	\supp \Mf(Z(s,t,\cdot)) &= Z(t,s,\supp \Mf)\\
	\supp \big[\delx\psi_f\cdot\MC](\tau,Z(s,t,\cdot)) &= Z\big(t,s,\supp \delx\psi_{f_n}\cdot\MC(\tau)\big)\\
	\supp \big[\Mchi\PHI_{\Ma,f_n}\big](\tau,Z(s,t,\cdot)) &= Z\big(t,s,\supp\Mchi\PHI_{\Ma,f_n}(\tau)\big)\\
	\supp \Mb\big(\tau,Z(s,t,\cdot)\big) &= Z\big(t,s,\supp \Mb(\tau)\big)
	\end{aligned}
	\right\} \; \subset \; B_{\zeta(r)}(0)\;.
	\end{align*}
	If we choose $\tau=s$ we can inductively deduce that $\supp f_n(t) \subset B_{\zeta(r)}(0)$ for all ${t\in[0,T]}$ and all $n\in\NN$. Finally, by another induction, ${f_n \in C^1(]0,T[\times\RR^6)}$ as the partial derivatives can be described recursively.
	Using Lemma \ref{LVLZ}, \eqref{ESTPHI3}, \eqref{ESTPHI4} and Lemma \ref{NPOT}, we con conclude by a straightforward computation that there exists some constant $c_*>0$ such that for all $t\in[0,T]$,\vspace{-2mm}
	\begin{align*}
	M_{1,0}(t)\le c_* \quad\text{and}\quad M_{n+1,n}(t) \le c_* \int\limits_0^t M_{n,n-1}(s) \ds,\quad n\in\NN \\[-8mm]
	\end{align*}
	where $M_{m,n}(t)$ denotes the expression
	\begin{align*}
	\max\left\{ \|f_m(t)-f_n(t)\|_\infty,\|\delt f_m(t)-\delt f_n(t)\|_\infty,\|\delz f_m(t)- \delz f_n(t)\|_\infty \right\}
	\end{align*}
	for $m,n\in\NN_0$. Thus by induction,
	\begin{align*}
	M_{n+1,n}(t) \le c_*\frac{t^n}{n!} \le c_*\frac{T^n}{n!}, \quad t\in[0,T],n\in\NN
	\end{align*}
	and hence for $m,n\in\NN$ with $n<m$,\vspace{-2mm}
	\begin{align*}
	M_{m,n}(t) \le \sum_{j=n}^{m-1} M_{j+1,j}(t)  \le  \sum_{j=n}^\infty c_*\frac{T^j}{j!} \to 0,\quad n\to\infty\;.
	\end{align*}
	Consequently $(f_n)$ is a Cauchy-sequence in $C_b^1([0,T]\times\RR^6)$ and converges to some function $f\in C_b^1([0,T]\times\RR^6)$ because of completeness. Obviously, as the radius $\zeta(r)$ does not depend on $n$,
	$\supp f(t) \subset  \overline{B_{\zeta(r)}(0)} \subset B_{\zeta(r+1)}(0)$ for all $t\in[0,T]$
	and $f$ satisfies the equation\vspace{-2mm}
	\begin{align}
	\label{IMPRF}
	f(t,z) = \Mf(Z(0,t,z)) + \int\limits_0^t \big[ \delx\psi_f\cdot\MC + \PHI_{\Ma,f} +\Mb\big](s,Z(s,t,z))\ds\;.
	\end{align}
	One can easily show that $f$ is a classical solution of \eqref{LVL} by differentiating both sides of \eqref{IMPRF} with respect to $t$. 
	We will finally prove uniqueness by assuming that there exists another solution $\tilde f$ of the initial value problem and define $d:=f-\tilde f$. Then for any $t\in[0,T]$,\vspace{-2mm}
	\begin{align*}
	\|d(t)\|_{L^2}^2 = 2\int\limits_0^t\int \delx\psi_{d(s)}\cdot\MC(s)\;d(s) + \Mchi\PHI_{\Ma,d}(s)\; d(s)  \dz\mathrm ds \le c\int\limits_0^t \|d(s)\|_{L^2}^2 \ds
	\end{align*}
	and hence $\|d(t)\|_{L^2}=0$ for all $t\in[0,T]$ by Gronwall's lemma. This directly implies that $f=\tilde f$ which means uniqueness.
\end{proof}

%
%

\begin{definition}
	\label{DWSLVL}
	\hypertarget{HDWSLVL}
	We call $f$ a strong solution of the initial value problem \eqref{LVL} iff the following holds:
	\begin{enumerate}
		\item [\textnormal{(i)}] $f\in H^1(]0,T[\times \RR^6) \subset C([0,T];L^2)$.
		\item [\textnormal{(ii)}] $f$ satisfies
		\begin{align*}
		\delt f  + v\cdot\delx f  + \MA\cdot\delv f + (v\times \MB)\cdot\delv f  = \delx\psi_f\cdot\MC + \PHI_{\Ma,f} + \Mb
		\end{align*}
		almost everywhere on $[0,T]\times\RR^6$.
		\item [\textnormal{(iii)}] $f$ satisfies the initial condition $f\big\vert_{t=0} = \Mf$ almost everywhere on $\RR^6$.
		\item [\textnormal{(iv)}] There exists some radius $r>0$ such that $\supp f(t) \subset B_r (0)$, $t\in [0,T]$.
	\end{enumerate}
\end{definition}

%
%

\begin{corollary}
	\label{WSLVL}
	\hypertarget{HWSLVL}
	We define $r:=\max\{r_0,r_2\}$ and let $C>0$ denote some constant depending only on $r_0,\,r_2$ and the norms of the coefficients.
	\begin{itemize}
		\item [\textnormal{(a)}] Suppose that $\MB  \in L^2(0,T;C^{1,\gamma}(\RR^3;\RR^3)\big)$, $\MC \in L^2\big(0,T;H^1\cap C_b(\RR^6;\RR^3)\big)$, \linebreak $\Mb \in L^2\big(0,T;C_b\cap H^1(\RR^6)\big)$ and $\Mf \in C^1_c(\RR^6)$.
		Moreover, we assume that the regularity conditions \eqref{RCa}, \eqref{RCA}, \eqref{RCchi} and the support conditions \eqref{SC1}, \eqref{SC2} hold.
		Then there exists a unique strong solution $f\in L^\infty\cap H^1(]0,T[\times\RR^6)$ of the initial value problem \eqref{LVL} such that
		\begin{align*}
		\|f\|_{L^\infty(]0,T[\times\RR^6)} + \|f\|_{H^1(]0,T[\times\RR^6)} \le C
		\end{align*}
		and $\supp f(t) \subset B_{\zeta(3+r)}(0)$ for almost all $t\in[0,T]$.
		\item [\textnormal{(b)}] Suppose that $\Mb=0$, $\MC=0$ and $\MB  \in L^2(0,T;C^{1,\gamma}(\RR^3;\RR^3)\big)$.
		Moreover, we assume that the regularity conditions \eqref{RCa}, \eqref{RCf}, \eqref{RCA}, \eqref{RCchi} and the support conditions \eqref{SC1}, \eqref{SC2} hold.
		There exists a unique strong solution $f\in W^{1,2}(0,T;C_b) \cap C([0,T];C^1_b)$ of \eqref{LVL} such that
		\begin{align*}
		\|f\|_{L^\infty(]0,T[\times\RR^6)} + \|f\|_{H^1(]0,T[\times\RR^6)} \le C
		\end{align*}
		and $\supp f(t) \subset B_{\zeta(2+r)}(0)$ for almost all $t\in[0,T]$. If $r_1=\zeta(r_0)$, the values of $f\big\vert_{B_{r_0}(0)}$ do not depend on the choice of $\Mchi$ as long as \eqref{RCchi} and \eqref{SC2} hold. 
	\end{itemize}
\end{corollary}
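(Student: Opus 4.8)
The plan is to obtain both strong solutions as suitable limits of the classical solutions furnished by Proposition \ref{CSLVL}, in complete analogy to the way Theorem \ref{GWS} was deduced from Theorem \ref{GCS}. For part (a) I would first regularize the rough coefficients: mollify $\MB$, $\MC$, $\Mb$ and $\Mf$ in time (and, where needed, in space) to obtain sequences $(\MB_n)$, $(\MC_n)$, $(\Mb_n)$, $(\Mf_n)$ that satisfy the strong regularity hypotheses \eqref{RCb}--\eqref{RCC} of Proposition \ref{CSLVL} and converge to the original data in $L^2(0,T;C^{1,\gamma})$, $L^2(0,T;H^1\cap C_b)$, $L^2(0,T;H^1\cap C_b)$ and $C^1_c$ respectively. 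Proposition \ref{CSLVL} then produces classical solutions $f_n$, all supported in one fixed ball, since the radius $\zeta$ of Lemma \ref{LVLZ} depends only on $\|\MA\|_{L^2(0,T;L^\infty)}$ and on the fixed support radii $r_0,r_2$.

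The heart of the argument is a set of bounds uniform in $n$. Because the characteristic velocity field $(v,\MA+v\times\MB)$ is divergence-free in phase space (so its flow is measure preserving by Lemma \ref{CHS}(b)), testing the equation against $f_n$ makes every transport term drop out, leaving $\frac{\mathrm d}{\mathrm dt}\|f_n(t)\|_{L^2}^2$ controlled by $\|\delx\psi_{f_n}(t)\|_{L^2(B_r(0))}$, $\|\PHI_{\Ma,f_n}(t)\|_{L^2(B_r(0))}$ and $\|\Mb_n(t)\|_{L^2}$; by Lemma \ref{NPOT}(b) and \eqref{ESTPHI1} this closes a Gronwall loop and bounds $\|f_n\|_{C([0,T];L^2)}$, the $L^2$-in-time integrability of the coefficients being exactly what is needed to keep the Gronwall exponent finite. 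Differentiating the equation in $z$ and repeating the energy estimate, now invoking \eqref{ESTPHI2} for the $\PHI_{\Ma,f_n}'$ term together with the $L^2(0,T;H^1)$-bounds on $\MC_n,\Mb_n$ and the $C^{1,\gamma}$-regularity of $\MA,\MB_n$, yields a uniform $L^2(0,T;H^1)$ bound, and the equation itself then bounds $\delt f_n$ in $L^2$; the $L^\infty$ bound follows from the implicit representation \eqref{EXPLVL} with \eqref{ESTPHI3} and Lemma \ref{NPOT}(c). With these bounds, Banach--Alaoglu gives a subsequence converging weakly in $H^1(]0,T[\times\RR^6)$ and weakly-$*$ in $L^\infty$, and since the supports stay in a fixed ball, Rellich--Kondrachov upgrades this to strong $L^2$-convergence. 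Passing to the limit in the distributional formulation, verifying condition (ii) by the same integration-by-parts manoeuvre used for condition (ii) in the proof of Proposition \ref{COMP}, and reading off (i), (iii), (iv) from the bounds, the initial data and the confined support, shows the limit is a strong solution; uniqueness is a Gronwall estimate on the difference of two solutions exactly as in Proposition \ref{UNIQS}.

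For part (b) only $\MB$ must be regularized. Here the absence of $\MC$ and $\Mb$, together with the extra regularity $\Mf\in C^2_c$, lets me follow the stronger scheme of Theorem \ref{GWS}: Lipschitz and Hölder estimates for the linear problem, established exactly as in Lemma \ref{LIP} but with the coefficient norm $\|\MB_n-\MB_m\|_{L^2(0,T;C^{1,\gamma})}$ in the role of $\|B-H\|_\WW$, show that $(f_n)$ is Cauchy in $C([0,T];C^1_b)\cap W^{1,2}(0,T;C_b)$, so its limit inherits that regularity. The assertion that $f|_{B_{r_0}(0)}$ is independent of the cut-off $\Mchi$ is precisely Remark~(b): with $\MC=0$ and $r_1=\zeta(r_0)$ the representation \eqref{EXPLVL3} applies and $\Mchi\equiv 1$ along every characteristic emanating from $\Bro$, since $Z(s,t,\Bro)\subset B_{r_1}(0)$.

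I expect the uniform $H^1$ (respectively $C^1_b$) estimate to be the main obstacle. Differentiating the equation creates source terms built from $\PHI_{\Ma,f}'$ and from the spatial derivatives of $\MA,\MB,\MC,\Mb$, and closing the Gronwall loop for these requires combining the estimates \eqref{ESTPHI2} and \eqref{ESTPHI4} carefully with the divergence-free transport structure, all while the regularized fields converge only in $L^2$ in time; keeping the constant $C$ dependent solely on $r_0,r_2$ and the coefficient norms throughout this step is the delicate point.
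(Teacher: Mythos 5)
Your proposal follows the paper's own strategy almost step for step: regularize the rough coefficients so that Proposition \ref{CSLVL} yields classical solutions with uniformly confined supports, derive bounds uniform in the approximation parameter, pass to the limit (Banach--Alaoglu, Rellich--Kondrachov, Mazur's lemma for the initial condition, and the integration-by-parts manoeuvre of Proposition \ref{COMP} for condition (ii)), and prove uniqueness by the Gronwall argument of Proposition \ref{UNIQS}; likewise your part (b) --- a Cauchy-sequence argument in $W^{1,2}(0,T;C_b)\cap C([0,T];C^1_b)$ patterned on Lemma \ref{LIP}, with the cut-off independence inherited from the classical solutions (item (b) of the remark following Proposition \ref{CSLVL}) via uniform convergence --- is exactly the paper's proof. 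The one place you genuinely deviate is how the uniform $L^2$ and $H^1$ bounds in part (a) are obtained: you use energy estimates, testing the equation and its $z$-derivatives against $f_n$ and exploiting that the transport field $(v,\MA+v\times\MB)$ is divergence-free in phase space, whereas the paper differentiates the implicit representation \eqref{EXPLVL} along characteristics and applies Gronwall there. Both routes close, and yours has the small advantage of never needing the $W^{2,\beta}$-type control of $D_zZ_n$ for this step. One caveat about your ordering, though: the $H^1$ energy estimate contains the term $\delx\psi_{f_n}\cdot\delzi\MC_n$ paired with $\delzi f_n$, which cannot be closed using only $L^2$-based norms of $f_n$ (Hölder fails, since $\tfrac{1}{6}+\tfrac{1}{2}+\tfrac{1}{2}>1$ and $\delx\psi_{f_n}$ is only controlled in $L^6$ by $\|f_n\|_{L^2}$); you need $\|\delx\psi_{f_n}\|_{L^\infty}\le C\|f_n\|_{L^\infty}$ from Lemma \ref{NPOT}(c), so the $L^\infty$ bound via \eqref{EXPLVL} and \eqref{ESTPHI3} must be established \emph{before}, not after, the $H^1$ bound. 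Since that $L^\infty$ estimate is self-contained, this is a harmless reordering rather than a gap.
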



\begin{proof}
	To prove (a) we can choose $(\Mb_k) \subset C([0,T];C^1_b)$, $(\MB_k) \subset C([0,T];C^{1,\gamma})$, $(\MC_k)\subset C([0,T];C^1_b)$ and $(\Mf_k)\subset C^2_c(\RR^6)$ such that
	\begin{align*}
	&\Mb_k \to \Mb \text{ in } L^2\big( 0,T;C_b\cap H^1 \big),	&& \|\Mb_k\|_{L^2(0,T;H^1)} \le 2 \|\Mb\|_{L^2(0,T;H^1)}, \\
	& && \|\Mb_k\|_{L^2(0,T;C_b)} \le 2 \|\Mb\|_{L^2(0,T;C_b)}, \\
	&\Mf_k \to \Mf \text{ in } C^1_b(\RR^6),	&& \|\Mf_k\|_{C^1_b} \le 2\|\Mf\|_{C^1_b}\\
	&\MB_k \to \MB \text{ in } L^2\big( 0,T;C^{1,\gamma} \big),	&& \|\MB_k\|_{L^2(0,T;C^{1,\gamma})} \le 2\|\MB\|_{L^2(0,T;C^{1,\gamma})} \\
	&\MC_k \to \MC \text{ in } L^2\big( 0,T;C_b \cap H^1\big) ,	&& \|\MC_k\|_{L^2(0,T;H^1)} \le 2 \|\MC\|_{L^2(0,T;H^1)}, \\
	& && \|\MC_k\|_{L^2(0,T;C_b)} \le 2 \|\MC\|_{L^2(0,T;C_b)} 
	\end{align*}
	and for all $t\in[0,T]$, $\supp \Mb_k(t)$, $\supp \Mf_k$, and $\supp \MC(t) \subset B_{r_0+1}(0)$. Then, due to Proposition \ref{CSLVL}, for every $k\in\NN$ there exists a unique classical solution $f_k$ of \eqref{LVL} to the coefficients $\Ma$, $\Mb_k$, $\Mf_k$, $\MA$, $\MB_k$, $\MC_k$ and $\Mchi$. Moreover for all $t\in[0,T]$,
	$\supp f_k(t) \subset B_{\varrho}(0) $ with $\varrho := \zeta(2 + \max\{r_0,r_2\}) = \zeta(2+r)$.
	Now let $Z_k$ denote the solution of the characteristic system to $\MA$ and $\MB_k$ satisfying $Z_k(t,t,z)=z$ and let $c>0$ denote some generic constant depending only on $T$, $r_0$, $r_2$ and the norms of the coefficients. From Lemma \ref{LVLZ} we know that for any $r>0$ and all $s,t\in[0,T]$,
	\begin{align}
	\label{ESZK} 
	\|Z_k(s,t,\cdot)\|_{L^\infty(B_{r}(0))}< C(r) \tand \|\delz Z_k(s,t,\cdot)\|_{L^\infty(B_r(0))}< C(r)
	\end{align}
	where $C(r)>0$ depends only on $r$, $\|\MA\|_{L^2(0,T;C^1_b)}$ and $\|\MB\|_{L^2(0,T;C^1_b)}$. Then we can conclude from the implicit description \eqref{EXPLVL} that\vspace{-2mm}
	\begin{align*}
	|f_k(t,z)| &\le \|\Mf_k\|_\infty + \int\limits_0^t \|\delx\psi_{f_k}(s)\|_\infty \, \|\MC_k(s)\|_\infty + \|\PHI_{\Ma,f_k}(s)\|_\infty + \|\Mb_k(s)\|_\infty \ds\\
	&\le c + c\int\limits_0^t \|{f_k}(s)\|_\infty  \ds, \quad (t,z)\in[0,T]\times\RR^6.
	\end{align*}
	which yields $\|f_k(t)\|_{L^\infty} \le c$ by Gronwall's lemma. By differentiating \eqref{EXPLVL} and using \eqref{ESZK} the $z$-derivative can be bounded similarly by\vspace{-2mm}
	\begin{align*}
	&\|\delz f_k(t)\|_{L^2}^2 = \|\delz f_k(t)\|_{L^2(B_\varrho(0))}^2 \le c + c\; \int\limits_0^t \|\delz f_k(s)\|_{L^2}^2  \ds
	\end{align*}
	which implies that $\|\delz f_k(t)\|_{L^2}\le c$ for all $t\in[0,T]$. Finally one can easily show that $\|\delt f_k\|_{L^2(0,T;L^2)} \le c$ by expressing $\delt f_k$ by the Vlasov equation.
	Since all $f_k(t)$ are compactly supported in $B_\varrho(0)$ this yields 
	$$\|f_k\|_{L^\infty(]0,T[\times\RR^6)} + \|f_k\|_{H^1(]0,T[\times\RR^6)} \le c\;.$$
	Then, according to the Banach-Alaoglu theorem, there exists $f \hspace{-2pt}\in\hspace{-2pt} H^1(]0,T[\times\RR^6)$ such that $f_k \rightharpoonup f$ after extraction of a subsequence. Moreover there exists some function ${f^*\in L^\infty(]0,T[\times\RR^6)}$ such that $f_k \overset{*}{\wto} f^*$ up to a subsequence, i.e., a subsequence of $(f_k)$ converges to $f^*$ with respect to the weak-*-topology on $L^1(]0,T[\times\RR^6)^*$. Thus $f=f^*\in L^\infty(]0,T[\times\RR^6)$  $\cap\; H^1(]0,T[\times\RR^6)$. We will now show that $f$ is a strong solution of \eqref{LVL} by verifying the conditions of Definition~\ref{DWSLVL}.\pskip
	
	\textit{Condition} (i) is evident since ${f \in H^1(]0,T[\times\RR^6)}\subset W^{1,2}(0,T;L^2)$ which directly yields ${f\in C([0,T];L^2)}$ by Sobolev's embedding theorem.\pskip
	
	\textit{Condition} (iv) is also obvious because $\supp f_k \subset B_{\varrho}(0)$ for all $k\in\NN$, $t\in[0,T]$. The radius $\varrho$ does not depend on $k$ and satisfies $\varrho <\zeta(3+r)$.\pskip
	
	\textit{Condition} (ii): By Rellich-Kondrachov, $f_k\to f$ in $L^2([0,T]\times\RR^6)$ up to a subsequence. This implies that $\psi_{f_k} \to \psi_f$ and $\PHI_{\Ma,f_k}\to \PHI_{\Ma,f}$ in $L^2([0,T]\times\RR^3)$ and the assertion easily follows.\pskip
	
	\textit{Condition} (iii): Finally, according to Mazur's lemma, there exists some sequence $(\bar f_k)_{k\in\NN} \subset H^1(]0,T[\times\RR^6)$ such that $\bar f_k\to f$ in $H^1(]0,T[\times\RR^6)$ where for all $k\in\NN$, $\bar f_k$ is a convex combination of $f_1,...,f_k$. This means $\bar f_k(0) = \Mf$ and hence
	\begin{align*}
	\|f(0)-\Mf\|_{L^2} \le c\; \|f-\bar f_k\|_{W^{1,2}(0,T;L^2)} \le c\; \|f-\bar f_k\|_{H^1(]0,T[\times\RR^6)} \to 0,\;\; k\to \infty.
	\end{align*}
	
	Consequently $f$ is a strong solution but we still have to prove uniqueness. We assume that there exists another strong solution $\tilde f$ and define $d:=f-\tilde f$. Then, by the fundamental theorem of calculus,\vspace{-2mm}
	\begin{align*}
	\|d(t)\|_{L^2}^2 
	= 2\int\limits_0^t\int \delx\psi_{d(s)}\cdot\MC(s)\;d(s) + \Mchi\PHI_{\Ma,d}(s)\; d(s)  \dz\mathrm ds
	\le c\int\limits_0^t \|d(s)\|_{L^2}^2 \ds
	\end{align*}
	for all $t\in[0,T]$. Hence $\|f(t)-\tilde f(t)\|_{L^2}^2 = \|d(t)\|_{L^2}^2 = 0$ for every $t\in[0,T]$ by Gronwall's lemma. This proves (a).\pskip
	
	To prove (b) we only have to approximate $\MB$. Therefore we choose some sequence $(\MB_k) \subset C([0,T];C^{1,\gamma})$ such that
	\begin{align*}
	\|\MB_k - \MB\|_{L^2(0,T;C^{1,\gamma})}\to 0  \;\;\text{and}\;\; \|\MB_k\|_{L^2(0,T;C^{1,\gamma})} \le 2\|\MB\|_{L^2(0,T;C^{1,\gamma})}, \;\; k\in\NN.
	\end{align*}
	Then for any $k\in\NN$ there exists a unique classical solution $f_k$ of the system \eqref{LVL} to the coefficients $\Ma$, $\Mf$, $\MA$, $\MB_k$ and $\Mchi$ according to Proposition \ref{CSLVL}. Recall that for all $t\in [0,T]$, $\supp f_k(t) \subset B_{\varrho}(0)$ where $\varrho := \zeta(r+1)$ with $r = \max\{r_0,r_2\}$. Again, let $Z_k$ denote the solution of the characteristic system to $\MA$ and $\MB_k$ satisfying $Z_k(t,t,z)=z$ and in the following the letter $c$ denotes some generic positive constant depending only on $T$, $r_0$, $r_2$ and the norms of the coefficients. Now for all $s,t\in[0,T]$ (where $s\le t$ without loss of generality) and $z\in B_{\varrho}(0)$,$\color{white}{\Big|}$\vspace{-3mm}
	\begin{align*}
	|Z_k(s)-Z_j(s)| 
	&\le \int\limits_s^t c\;(1+\|D_x \MA(\tau)\|_\infty+\|D_x \MB_k(\tau)\|_\infty)\; |Z_k(\tau) - Z_j(\tau)| \dtau \\[-1mm]
	&\qquad+ c\int\limits_0^T \|\MB_k(\tau) - \MB_j(\tau)\|_\infty \dtau
	\end{align*}
	which implies that $\|Z_k(s,t,\cdot)-Z_j(s,t,\cdot)\|_{L^\infty(B_\rho(0))} \le c\;\|\MB_k-\MB_j\|_{L^2(0,T;L^\infty)} $. Similarly, for any $i\in\{ 1,...,6 \}$ the difference of the $i$-th derivative can be bounded by
	\begin{align*}
	&|\delzi Z_k(s)- \delzi Z_j(s)| \le c\;  \|\MB_k-\MB_j\|_{L^2(0,T;C^1_b)}^\gamma\\
	&\qquad\qquad\quad + \int\limits_s^t c\;(1+\|\MA(\tau)\|_{C^{1,\gamma}}+\|\MB_k(\tau)\|_{C^{1,\gamma}})\;|\delzi Z_k(\tau)- \delzi Z_j(\tau)| \dtau.
	\end{align*}
	for all $s,t\in[0,T]$ and $z\in B_{\varrho}(0)$. Thus
	\begin{align*}
	\|\delz Z_k(s)- \delz Z_j(s)\|_{L^\infty(B_\varrho(0))} &\le  c\; \|\MB_k-\MB_j\|_{L^2(0,T;C^1_b)}^\gamma\,.
	\end{align*}
	Now for all $t\in[0,T]$, $z\in B_{\varrho}(0)$,
	\begin{align*}
	|f_k(t,z)-f_j(t,z)| &\le \|D\Mf\|_\infty |Z_k(0,t,z)-Z_j(0,t,z)| + c \hspace{-2pt} \int\limits_0^t\hspace{-2pt} \|f_k(\tau)-f_j(\tau)\|_\infty \mathrm d\tau \\
	&\le c\;\|\MB_k-\MB_j\|_{L^2(0,T;L^\infty)} + c \int\limits_0^t \|f_k(\tau)-f_j(\tau)\|_\infty \dtau
	\end{align*}
	and Gronwall's lemma yields
	$\|f_k-f_j\|_{L^\infty(0,T;L^\infty)} \le c\|\MB_k-\MB_j\|_{L^2(0,T;L^\infty)} $.
	Similarly, for all $t\in[0,T],z\in\Br$,
	\begin{align*}
	|\delz f_k(t,z)- \delz f_j(t,z)| \le c\|\MB_k-\MB_j\|_{L^2(0,T;C^1_b)}^\gamma + c \hspace{-2pt} \int\limits_0^t\hspace{-2pt} \|\delz f_k(\tau)- \delz f_j(\tau)\|_\infty \mathrm d\tau
	\end{align*}
	and consequently $\|\delz f_k- \delz f_j\|_{L^\infty(0,T;L^\infty)} \le c\;\|\MB_k-\MB_j\|_{L^2(0,T;C^{1,\gamma})}^\gamma$. By expressing $\delt f_k$ and $\delt f_j$ by their corresponding Vlasov equation we can easily verify the estimate $\|\delt f_k- \delt f_j\|_{L^2(0,T;C_b)} \le c\;\|\MB_k-\MB_j\|_{L^2(0,T;C^{1,\gamma})}^\gamma$. \pskip
	
	This means that $(f_k)$ is a Cauchy sequence in $W^{1,2}(0,T;C_b)\cap C([0,T];C^1_b)$ and thus it converges to some function $f\in W^{1,2}(0,T;C_b)\cap C([0,T];C^1_b)$ because of completeness. Note that for all $t\in[0,T]$, $\supp f(t) \subset B_{\zeta(r+2)}$. From the strong convergence one can easily conclude that $f$ satisfies the system \eqref{LVL} almost everywhere and thus $f$ is a strong solution according to Definition \ref{DWSLVL}. \pskip
	
	Moreover, by the definition of convergence, we can find $k\in\NN$ such that 
	$\|f-f_k\|_{W^{1,2}(0,T;C_b)} + \|f-f_k\|_{C(0,T;C^1_b)} \le 1$
	and consequently
	\begin{align*}
	&\|f\|_{W^{1,2}(0,T;C_b)} + \|f\|_{C(0,T;C^1_b)} \le 1 + \|f_k\|_{W^{1,2}(0,T;C_b)} + \|f_k\|_{C(0,T;C^1_b)} \le c
	\end{align*}
	as the sequence $(f_k)$ is bounded in $C^1_b(]0,T[\times\RR^6)$ according to Proposition \ref{CSLVL} and $\MB_k$ is bounded by $\|\MB_k\|_{L^2(0,T;C^{1,\gamma})} \le 2\|\MB\|_{L^2(0,T;C^{1,\gamma})}$. \bpskip
	
	We will now assume that $r_1 = \zeta(r_0)$. As it has already been discussed in the comment to Proposition \ref{CSLVL} the values of $f_k\vert_\Bro$ do not depend on the choice of $\Mchi$ as long as \eqref{RCchi} and \eqref{SC2} hold. As $f_k\vert_\Bro$ converges to $f\vert_\Bro$ uniformely on $[0,T]\times\Bro$ this result holds true for $f\vert_\Bro$.
\end{proof}

\section{Fr\'echet differentiability of the field-state operator}

Again, let $K>0$ be arbitrary. We can now use the results of Section 5 to establish Fr\'echet differentiability of the control state operator on $\IBB$ (that is the interior of $\BB$).

\begin{theorem}
	\label{FDCSO}
	Let $f.$ be the field-state operator as defined in Definition \ref{CSO}. For all \linebreak$B\in\BB$, $H\in \VV$ there exists a unique strong solution $f_B^H\in L^\infty\cap H^1(]0,T[\times\RR)$ $\subset C([0,T];L^2)$ of the initial value problem
	\begin{displaymath}
	\refstepcounter{equation}
	\label{FDEQ}
	\left\{
	\begin{aligned}
	&\delt f + v\scdot\delx f - \delx\psi_{f_B}\scdot\delv f - \delx\psi_f\scdot\delv f_B
	+ (v\stimes B)\scdot\delv f + (v\stimes H)\scdot\delv f_B = 0 \\[0.25cm]
	&f\big|_{t=0}=0 \hspace{267pt}\textnormal{(\theequation)}
	\end{aligned}
	\right.	
	\end{displaymath}
	with $\supp f(t) \subset B_\varrho(0)$ for all $t\in [0,T]$ and some radius $\varrho>0$ depending only on $T,K,\mathring f$ and $\beta$. Then the following holds:
	\begin{itemize}
		\item [\textnormal{(a)}] The field-state operator $f.$ is Fr\'echet differentiable on $\IBB$ with respect to the norm on $C([0,T];L^2(\RR^6))$, i.e., for any $B\in\IBB$ there exists a unique linear and bounded operator $f'_B: \VV\to C([0,T];L^2(\RR^6))$ such that
		\begin{gather*}
		\forall \eps>0\; \exists \delta > 0\; \forall H\in \VV \text{ with } \|H\|_{\VV} < \delta :\\[2mm]
		B+H \in \IBB\tand \frac{\| f_{B+H} - f_B - f'_B[H] \|_{C([0,T];L^2)}}{\|H\|_{\VV}} < \eps\,.
		\end{gather*}
		The Fr\'echet derivative is given by $f'_B[H]=f_B^H$ for all $H\in\VV$.
		\item [\textnormal{(b)}] For all $B,H\in\IBB$, the solution $f_B^H$ depends Hölder-continuously on $B$ in such a way that there exists some constant $C>0$ depending only on $\mathring f, T, K$ and $\beta$ such that for all $A,B\in\IBB$,
		\begin{align}
		\label{CFF}
		\underset{\|H\|_\VV \le 1}{\sup}\;\|f_A'[H] - f_B'[H]\|_{L^2(0,T;L^2)} \le C \;\|A-B\|_{\VV}^{\gamma}.
		\end{align}
	\end{itemize}
\end{theorem}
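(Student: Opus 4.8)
The plan is to realize the linearized equation \eqref{FDEQ} as a special case of the general inhomogeneous linear Vlasov equation \eqref{LVL} and then to run two $L^2$-energy estimates, one for the linearization remainder (part (a)) and one for the difference of two linearized solutions (part (b)). First I would establish existence, uniqueness and linear–bounded dependence of $f_B^H$. Rearranging \eqref{FDEQ} gives exactly \eqref{LVL} with the identifications $\MA=-\delx\psi_{f_B}$, $\MB=B$, $\MC=\delv f_B$, $\Mb=-(v\times H)\cdot\delv f_B$, $\Ma\equiv 0$ (so $\PHI_{\Ma,f}\equiv 0$) and $\Mf\equiv 0$. I would then verify the hypotheses of Corollary \ref{WSLVL}\,(a): $\MB=B\in\WW\hookrightarrow L^2(0,T;C^{1,\gamma})$ by Lemma \ref{LAC}\,(c); $\MC=\delv f_B\in L^2(0,T;H^1\cap C_b)$ and the required $C^{1,\gamma}$-regularity of $\MA=-\delx\psi_{f_B}$ (that is $D_x^2\psi_{f_B}\in C^{0,\gamma}$, obtained from Lemma \ref{NPOT}\,(c) applied to $\delx f_B$) both follow from Theorem \ref{GWS}; and $\Mb\in L^2(0,T;C_b\cap H^1)$ follows from $H\in\VV$ together with the compact support and regularity of $f_B$. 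Corollary \ref{WSLVL}\,(a) then produces a unique strong solution $f_B^H\in L^\infty\cap H^1(]0,T[\times\RR^6)$ supported in a fixed ball $B_\varrho(0)$ whose radius depends only on $T,K,\mathring f,\beta$, since $\|\delx\psi_{f_B}\|_\infty$ is uniformly bounded. Because \eqref{FDEQ} is linear both in $f$ and in $H$, uniqueness forces $H\mapsto f_B^H$ to be linear, and the a priori bound of Corollary \ref{WSLVL}\,(a) (proportional to $\|\Mb\|$) gives $\|f_B^H\|_{C([0,T];L^2)}+\|\delz f_B^H\|_{L^\infty(0,T;L^2)}\le C\|H\|_\VV$; this is the bounded linear operator $f'_B$.

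The analytic workhorse for both parts is an $L^2$-estimate for equations of the shape $\delt g+v\cdot\delx g+(v\times B)\cdot\delv g-\delx\psi_{f_B}\cdot\delv g-\delx\psi_g\cdot\delv h=S$ with $g|_{t=0}=0$ and $h$ compactly supported with $\|\delv h\|_\infty\le C$. Multiplying by $g$ and integrating, the transport field $(v,-\delx\psi_{f_B}+v\times B)$ is divergence-free in $(x,v)$ (indeed $\divergence_v(v\times B)=0$) and drops out, the coupling term is controlled via $\|\delx\psi_g\|_{L^2(B_\varrho)}\le C\|g\|_{L^2}$ (boundedness of $\delx\psi.$, Lemma \ref{NPOT}\,(b)), and Gronwall's lemma gives $\|g\|_{C([0,T];L^2)}\le C\|S\|_{L^2(0,T;L^2)}$. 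For part (a) I set $u:=f_{B+H}-f_B$ (defined once $B\in\IBB$ and $\|H\|_\VV$ is so small that $B+H\in\BB$) and $w:=u-f_B^H$. Subtracting the equations for $f_{B+H}$, $f_B$ and $f_B^H$ and repeatedly adding and subtracting terms, I would show that $w$ solves an equation of exactly this type with $h=f_{B+H}$ and source
\begin{align*}
S=\delx\psi_{f_B^H}\cdot\delv u-(v\times H)\cdot\delv u.
\end{align*}
The crux is that $S$ is \emph{superlinear} in $H$: combining the linear bound $\|\delx\psi_{f_B^H}\|_{L^2}\le C\|f_B^H\|_{L^2}\le C\|H\|_\VV$ with the Hölder bound $\|\delv u\|_\infty\le L_2\|H\|_\WW^{\gamma}$ from Corollary \ref{WLIP} yields $\|S\|_{L^2(0,T;L^2)}\le C\|H\|_\VV^{1+\gamma}$, whence $\|w\|_{C([0,T];L^2)}\le C\|H\|_\VV^{1+\gamma}=o(\|H\|_\VV)$, which is precisely Fréchet differentiability with $f'_B[H]=f_B^H$.

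For part (b) I set $d:=f_A^H-f_B^H$ and subtract the two linearized equations built around $f_A$ and $f_B$. After the same add/subtract bookkeeping, $d$ solves a linear Vlasov equation of the workhorse type (built around $f_A$), with zero initial datum and source
\begin{align*}
S_d=\delx\psi_{f_A-f_B}\cdot\delv f_B^H+\delx\psi_{f_B^H}\cdot\delv(f_A-f_B)-\big(v\times(A-B)\big)\cdot\delv f_B^H-(v\times H)\cdot\delv(f_A-f_B).
\end{align*}
Each term is bounded uniformly over $\|H\|_\VV\le 1$ using Corollary \ref{WLIP} (namely $\|f_A-f_B\|_{C([0,T];C_b)}\le L_1\|A-B\|_\WW$ and $\|\delz(f_A-f_B)\|_\infty\le L_2\|A-B\|_\WW^{\gamma}$), Lemma \ref{NPOT}, Lemma \ref{LAC}\,(c), and the bounds $\|f_B^H\|_{L^2(0,T;L^2)},\,\|\delv f_B^H\|_{L^\infty(0,T;L^2)}\le C\|H\|_\VV\le C$. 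The slowest-decaying contributions scale like $\|A-B\|_\WW^{\gamma}$, so $\|S_d\|_{L^2(0,T;L^2)}\le C\|A-B\|_\VV^{\gamma}$, and the energy estimate delivers \eqref{CFF}.

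The main obstacle I anticipate is the bookkeeping that recasts the differences of the three (respectively two) nonlinear Vlasov equations into the clean transport-plus-source form required by the energy estimate, and in particular confirming the superlinearity $\|S\|\le C\|H\|_\VV^{1+\gamma}$ in part (a): this hinges on correctly pairing the \emph{linear} bound for $f_B^H$ (equivalently $\delx\psi_{f_B^H}$) with the \emph{Hölder} bound for $\delv u$. A secondary technical point is verifying the $C^{1,\gamma}$-regularity of the force $\MA=-\delx\psi_{f_B}$, without which \eqref{FDEQ} would not fall under the scope of Corollary \ref{WSLVL}.
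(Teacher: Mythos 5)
Your proposal is correct. Part (a) follows essentially the paper's own route: cast \eqref{FDEQ} as an instance of \eqref{LVL} covered by Corollary \ref{WSLVL}\,(a), expand the two nonlinearities, observe that the remainder is superlinear because the Lipschitz bound $\|f_{B+H}-f_B\|_{C([0,T];C_b)}\le L_1\|H\|_\WW$ pairs with the H\"older bound $\|\delz (f_{B+H}-f_B)\|_{C([0,T];C_b)}\le L_2\|H\|_\WW^\gamma$ of Corollary \ref{WLIP}, then close with an $L^2$ energy estimate and Gronwall; the only difference is bookkeeping, since the paper introduces the auxiliary solution $f_{\mathcal R}$ of \eqref{EQFR2} and identifies $f_{B+H}-f_B=f_B^H+f_{\mathcal R}$ by uniqueness, whereas you derive directly the PDE solved by $w=f_{B+H}-f_B-f_B^H$ (your source $S=\delx\psi_{f_B^H}\cdot\delv u-(v\times H)\cdot\delv u$ is exactly the paper's remainder $\mathcal R$ after shifting the coupling term from $\delv f_B$ to $\delv f_{B+H}$), and both are $\mathrm{O}(\|H\|_\VV^{1+\gamma})$. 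Part (b) is where you genuinely diverge: the paper approximates $A$, $B$, $H$ by smooth fields, works with classical solutions through the representation formula \eqref{EXPLVL} along characteristics together with the $L^\infty$ estimates of Lemma \ref{ES1} and Lemma \ref{LIP}, and passes to the limit via Rellich--Kondrachov, while you subtract the two linearized equations at the level of strong solutions and rerun the $L^2$ energy estimate with the source $S_d$, controlled by Corollary \ref{WLIP}, Lemma \ref{NPOT} and the a priori bounds on $f_B^H$. Your route is shorter and stays entirely within the strong-solution framework, at the price that every term of $S_d$ must be square-integrable; in particular the bound $\|\delv f_B^H\|_{L^\infty(0,T;L^2)}\le C$, which you need against the merely $L^2$-in-time factor $v\times(A-B)$, is not in the statement of Corollary \ref{WSLVL}\,(a) and must be extracted from its proof (the approximating classical solutions satisfy $\|\delz f_k(t)\|_{L^2}\le c$ uniformly in $t$, and this survives the weak-$*$ limit); since the paper itself cites Corollary \ref{WSLVL} together with its proof in the same spirit, this is a citation to be made precise rather than a gap. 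What the paper's detour through characteristics buys is pointwise control that it reuses elsewhere (e.g.\ in Theorem \ref{ADJS}); what your argument buys is economy and independence from the approximation machinery.
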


\begin{remark}
	As $K>0$ was arbitrary the obove results hold true on $\IBBB$ instead of $\IBB$. Hence they are especially true for $B\in\BB$. 
\end{remark}

\begin{proof} Let $C$ denote some generic positive constant depending only on $\mathring f$, $K$, $T$ and $\beta$. First note that the system \eqref{FDEQ} is of the type \eqref{LVL} whereby the coefficients of \eqref{FDEQ} satisfy the regularity and support conditions of Corollary~\ref{WSLVL}(a). Hence \eqref{FDEQ} has a strong solution $f_B^H\in L^\infty\cap H^1(]0,T[\times\RR^6)$. To prove Fr\'echet differentiability of the field-state operator we must consider the difference $f_{B+H}-f_B$ with $B\in\IBB$ and $H\in\VV$ such that $B+H\in\IBB$. Therefore we will assume that
	$\|H\|_\VV<\delta$ for some sufficiently small $\delta>0$.
	Now we expand the nonlinear terms in the Vlasov equation \eqref{VP} to pick out the linear parts. We have
	\begin{align*}
	&\delx\psi_{f_{B+H}}\cdot\delv f_{B+H} - \delx\psi_{f_{B}}\cdot\delv f_{B}\\
	&\quad = \delx\psi_{f_B}\cdot \delv(f_{B+H}-f_B) + \delx\psi_{(f_{B+H}-f_B)}\cdot\delv f_B + \mathcal R_1,\\[2mm]
	&\big(v\times (B+H)\big)\cdot\delv f_{B+H} - (v\times B)\cdot\delv f_{B} \\
	&\quad = (v\times B)\cdot\delv(f_{B+H}-f_B) + (v\times H)\cdot\delv f_B + \mathcal R_2
	\end{align*}
	where $\mathcal R_1 := \delx\psi_{(f_{B+H}-f_B)}\cdot\delv(f_{B+H}-f_B)$ and $\mathcal R_2 := (v\times H)\cdot\delv(f_{B+H}-f_B)$ are nonlinear remainders. Then $\mathcal R:=\mathcal R_1-\mathcal R_2$ lies in $L^2(0,T;H^1\cap C_b)$ and from Lemma \ref{NPOT} and Corollary \ref{WLIP} we can conclude that $\|\mathcal R\|_{L^2(0,T;L^2)} \le C \|H\|_{\VV}^{1 + \gamma}$. Obviously $f_{B+H}-f_B$ solves the initial value problem
	\begin{displaymath}	
	\refstepcounter{equation}
	\label{EQFR}
	\hspace{-2pt}\left\{
	\begin{aligned}
	&\hspace{-2pt}\delt f + v\scdot\delx f - \delx\psi_{f_B}\scdot\delv f - \delx\psi_f\scdot\delv f_B
	+ (v\stimes B)\scdot\delv f + (v\stimes H)\scdot\delv f_B \hspace{-1pt}= \mathcal R\\[0.25cm]
	&\hspace{-2pt}f\big|_{t=0}=0 \hspace{271pt}\textnormal{(\theequation)}
	\end{aligned}
	\hspace{-2pt}\right.
	\end{displaymath}
	almost everywhere on $[0,T]\times\RR^6$. From Corollary \ref{WSLVL}\,(a) we know that this solution is unique. Also according to Corollary \ref{WSLVL}\,(a) the system
	\begin{align}
	\label{EQFR2}
	\begin{cases}
	\delt f + v\cdot\delx f - \delx\psi_{f_B}\cdot\delv f - \delx\psi_f\cdot\delv f_B + (v\times B)\cdot\delv f = \mathcal R\;,\\[0.25cm]
	f\big|_{t=0}=0\;.
	\end{cases}
	\end{align}
	has a unique strong solution $f_{\mathcal R}$. Then $f_B^H+f_{\mathcal R}$ is a solution of \eqref{EQFR} due to linearity and thus $f_{B+H}-f_B = f_B^H + f_{\mathcal R}$ because of uniqueness. One can easily show that
	\begin{align*}
	\|f_{\mathcal R}(t)\|_{L^2}^2 \le C \int\limits_0^t   \|f_{\mathcal R}(s)\|_{L^2}^2 + \|f_{\mathcal R}(s)\|_{L^2}\;\|\mathcal R(s)\|_{L^2} \ds\;.
	\end{align*}
	Applying first the standard version and then the quadratic version of Gronwall's lemma (cf. Dragomir \cite[p.\,4]{dragomir}) yields
	\begin{align*}
	\|f_{\mathcal R}(t)\|_{L^2} \le C\;\|\mathcal R\|_{L^2(0,T;L^2)} \le C\;\|H\|_{\VV}^{1+\gamma}
	\end{align*}
	Let now $\eps>0$ be arbitrary. Then for all $t\in[0,T]$,
	\begin{align*}
	\frac{\| f_{B+H} - f_B - f_B^H \|_{C([0,T];L^2)}}{\|H\|_{\VV}} = \frac{\| f_{\mathcal R} \|_{C([0,T];L^2)}}{\|H\|_{\VV}}
	\le C\;\|H\|_{\VV}^{\gamma} < \eps
	\end{align*}
	if $\delta$ is sufficiently small. Hence assertion (a) is proved and the Fr\'echet derivative is determined by the system \eqref{FDEQ}. \pskip
	
	To prove (b) suppose that $A,B\in\IBB$ and $H\in\VV$ with $\|H\|_\VV\le 1$. Now, we choose sequences $(A_k),(B_k),(H_k)\subset C([0,T];W^{2,\beta})\subset C([0,T];C^{1,\gamma})$ such that $A_k\to A$, $B_k\to B$, $H_k\to H$ in $L^2(0,T;C^{1,\gamma})$ if $k$ tends to infinity. From Corollary \ref{WSLVL} (and its proof) we can conclude that
	\begin{gather*}
	\|f_{A_k}^{H_k}\|_{H^1(]0,T[\times\RR^6)} \le C \tand \|f_{B_k}^{H_k}\|_{H^1(]0,T[\times\RR^6)}  \le C,\\
	f_{A_k}^{H_k} \rightharpoonup f_A^H \tand f_{B_k}^{H_k} \rightharpoonup f_B^H \quad\text{in}\; H^1(]0,T[\times\RR^6)\,.
	\end{gather*}
	Since the $(x,v)$-supports of all occurring functions are contained in some ball $B_\varrho(0)$ whose radius $r$ depends only on $\mathring f$, $K$, $T$ and $\beta$ but not on $k$, we can apply the Rellich-Kondrachov theorem to obtain\vspace{-0.2cm}
	\begin{align*}
	f_{A_k}^{H_k} \to f_A^H \tand f_{B_k}^{H_k} \to f_B^H \quad\text{in}\; L^2([0,T]\times\RR^6)
	\end{align*}
	up to a subsequence. As $A_k,B_k$ and $H_k$ satisfy the regularity condition \eqref{RCB}, $f_{A_k}^{H_k}$ and $f_{B_k}^{H_k}$ are classical solutions and can be described implicitely by the representation formula \eqref{EXPLVL}. Note that Lemma \ref{ES1} holds true for $\varrho$ instead of $R$. Hence for all $s,t\in[0,T]$,
	\begin{gather*}
	\|Z_{F}(s,t,\cdot)\|_{L^\infty(B_\varrho(0))} \le C, \quad \|\delz f_F(s)\|_\infty \le C, \quad \|D_z^2 f_F\|_{L^2(0,T;L^2)} \le C 
	\end{gather*}
	for all $F\in\big\{A_k,B_k \,\big\vert\, k\in\NN \big\}$. Also recall that we know from Lemma \ref{LIP} (with $\varrho$ instead of $R$) that for all $s,t\in[0,T]$,
	\begin{gather*}
	\|f_{A_k}(s) - f_{B_k}(s)\|_{\infty} \le C\; \|A_k - B_k\|_{\VV}\;,\\[0.5mm]
	\|\delz f_{A_k}(s) - \delz f_{B_k}(s)\|_{\infty} \le C\; \|A_k - B_k\|_{\VV}^{\gamma}\;,\\
	\|Z_{A_k}(s,t,\cdot) - Z_{B_k}(s,t,\cdot)\|_{L^\infty(B_\varrho(0))} \le C\; \|A_k - B_k\|_{\VV}\;.
	\end{gather*}
	Using the implicit description \eqref{EXPLVL} it follows by a simple computation and the application of Gronwall's lemma that
	\begin{align*}
	\|f_{A_k}^{H_k} - f_{B_k}^{H_k}\|_{L^2(0,T;L^2)} \le C\; \|f_{A_k}^{H_k} - f_{B_k}^{H_k}\|_{L^\infty(0,T;L^2)} \le C\; \|A_k - B_k\|_{\VV}^{\gamma}.
	\end{align*}
	If $k\to\infty$, we obtain $\|f_{A}^{H} - f_{B}^{H}\|_{L^2(0,T;L^2)} \le C\; \|A - B\|_{\VV}^{\gamma}$ that is (b). \end{proof}


\section{An optimal control problem with a tracking type cost functional}

Let $\mathring f \in C^2_c(\RR^6)$ be any given initial datum and let $T>0$ denote some fixed final time. The aim is to control the time evolution of the distribution function in such a way that its value at time $T$ matches a desired distribution function $f_d\in C^2_c(\RR^6)$ as closely as possible. More precisely we want to find a magnetic field $B$ such that the $L^2$-difference $\|f_B(T)-f_d\|_{L^2}$ becomes as small as possible. Therefore we intend to minimize the quadratic cost functional:
\begin{align}
\label{OP1}
\begin{aligned}
&\text{Minimize} \quad J(B) = \frac 1 2 \|f_B(T)-f_d\|_{L^2(\RR^6)}^2 + \frac \lambda 2  \|D_x B\|_{L^2([0,T]\times\RR^3;\RR^{3\times 3})}^2, \\&\text{s.t.}\; B\in\BB.
\end{aligned}
\end{align}
where $\lambda$ is a nonnegative parameter. The field $B$ is the control in this model. As the state $f_B(t)$ preserves the $p$-norm, i.e., $\|f_B(t)\|_{p} = \|\mathring f\|_{p}$ for all $1\le p\le \infty$, $t\in[0,T]$, it makes sense to assume that $\|f_d\|_{p} = \|\mathring f\|_{p}$ for all $1\le p\le \infty$ because otherwise the exact matching $f(T)=f_d$ would be foredoomed to fail. \pskip

At first appearance the term ${\frac \lambda 2 \|D_x B\|_{L^2}^2}$ seems to be useless or even counterproductive as we actually want to minimize the expression ${\|f(T)-f_d\|_{L^2}}$. However, in optimal control theory, such a term is usually added because of its smoothing effect on the control. If $\lambda>0$ a magnetic field is punished by high values of the cost functional if its derivatives become large. Of course the weight of punishment depends on the size of $\lambda$. For that reason the additional term is referred to as the regularization term. Note that the regularity $B\in\HH$ is now necessary to avoid infinite values of the cost functional.\pskip

%

Of course such an optimization problem does only make sense if there actually exists at least one globally optimal solution. This fact will be established in the next Theorem. The proof is quite short as most of the work has already been done in the previous sections.
\begin{theorem}
	\label{EXOC1}
	The optimization problem \eqref{OP1} possesses a (globally) optimal solution $\B$, i.e., for all $B\in\BB$, $J(\B)\le J(B)$. 
\end{theorem}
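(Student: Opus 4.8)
The plan is to apply the direct method of the calculus of variations. Since both terms of $J$ are nonnegative, $J$ is bounded below, so $m:=\inf_{B\in\BB}J(B)\ge 0$ is a finite number and there exists a minimizing sequence $(B_k)_{k\in\NN}\subset\BB$ with $J(B_k)\to m$. First I would invoke the weak compactness of $\BB$ from Lemma \ref{LAC}(e) to extract a subsequence, again denoted $(B_k)$, converging weakly in $\VV$ to some limit $\B\in\BB$. The goal is then to show that $\B$ is a global minimizer, for which it suffices to prove the sequential weak lower semicontinuity $J(\B)\le\liminf_{k\to\infty}J(B_k)$; combined with $J(\B)\ge m$ (as $\B\in\BB$) this forces $J(\B)=m$.

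The first term of $J$ requires control of the states at the final time $T$. Since $\VV\hookrightarrow\WW$ continuously, weak convergence in $\VV$ implies weak convergence in $\WW$, so Proposition \ref{COMP} applies: after passing to a further subsequence, $f_{B_k}\to f_\B$ strongly in $L^2([0,T]\times\RR^6)$ and $f_{B_k}\rightharpoonup f_\B$ weakly in $W^{1,2}(0,T;L^2)$. The latter is the crucial point, because the cost functional sees only the time slice $t=T$: the evaluation map $u\mapsto u(T)$ is linear and bounded from $W^{1,2}(0,T;L^2)$ into $L^2(\RR^6)$ (via the embedding $W^{1,2}(0,T;L^2)\hookrightarrow C([0,T];L^2)$), hence weakly continuous, so $f_{B_k}(T)\rightharpoonup f_\B(T)$ weakly in $L^2(\RR^6)$. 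By weak lower semicontinuity of the $L^2$-norm applied to $f_{B_k}(T)-f_d\rightharpoonup f_\B(T)-f_d$, I obtain $\|f_\B(T)-f_d\|_{L^2}^2\le\liminf_{k\to\infty}\|f_{B_k}(T)-f_d\|_{L^2}^2$.

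For the regularization term I would note that $B\mapsto D_x B$ is a bounded linear map from $\HH$ into $L^2([0,T]\times\RR^3;\RR^{3\times 3})$, hence weakly continuous; since $B_k\rightharpoonup\B$ in $\VV$ implies $B_k\rightharpoonup\B$ in $\HH$, this gives $D_x B_k\rightharpoonup D_x\B$ and therefore $\|D_x\B\|_{L^2}^2\le\liminf_{k\to\infty}\|D_x B_k\|_{L^2}^2$ by weak lower semicontinuity of the norm. Adding the two lower-semicontinuity estimates yields $J(\B)\le\liminf_{k\to\infty}J(B_k)=m$, which completes the argument.

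The only genuine subtlety is the final-time term: the strong $L^2([0,T]\times\RR^6)$ convergence supplied by Proposition \ref{COMP} does not by itself control the single slice $f_{B_k}(T)$, and this gap is bridged by the weak $W^{1,2}$-in-time convergence together with weak lower semicontinuity of the norm. Everything else is the routine extraction-and-lsc structure of the direct method, and most of the analytic work (weak compactness of $\BB$ and weak compactness of the field-state operator) has already been carried out in Lemma \ref{LAC} and Proposition \ref{COMP}.
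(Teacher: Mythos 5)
Your proposal is correct and follows essentially the same route as the paper: direct method, weak compactness of $\BB$ from Lemma \ref{LAC}(e), Proposition \ref{COMP} for the states, weak convergence of the time-$T$ slices, and weak lower semicontinuity of the norms. The only (immaterial) difference is the justification of $f_{B_k}(T)\rightharpoonup f_{\B}(T)$: the paper invokes the fundamental theorem of calculus, while you use weak continuity of the bounded linear evaluation map $W^{1,2}(0,T;L^2)\hookrightarrow C([0,T];L^2)\ni u\mapsto u(T)\in L^2(\RR^6)$, which is an equivalent and equally valid argument.
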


\begin{proof} Suppose that $\lambda>0$ (if $\lambda = 0$ the proof is similar but even easier). The cost functional $J$ is bounded from below since $J(B)\ge 0$ for all $B\in\BB$. Hence ${M:={\inf}_{B\in\BB} J(B)}$ exists and we can choose a minimizing sequence $(B_k)_{k\in\NN}$ such that $J(B_k) \to M$ if ${k\to\infty}$. Without loss of generality we can assume that $J(B_k)\le M+1$ for all $k\in\NN$. As $\BB\subset\VV$ is weakly compact according to Lemma~\ref{LAC} it holds that $B_k\rightharpoonup \B$ in $L^2(0,T;W^{2,\beta})\cap L^2(0,T;H^1)$ for some weak limit $\B\in\BB$ after extraction of a subsequence. Then we know from Proposition~\ref{COMP} that $f_{B_k} \wto f_\B $ in $W^{1,2}(0,T;L^2)$ after subsequence extraction. By the fundamental theorem of calculus this implies that $f_{B_k}(T)\wto f_{\B}(T)$ in $L^2(\RR^6)$. Together with the weak lower semicontinuity of the $L^2$-norm this yields
	\begin{align*}
	J(\B) & \le \underset{k\to\infty}{\lim\inf}\left[ \frac 1 2 \|f_{B_k}(T)-f_d\|_{L^2}^2 \right] + \underset{k\to\infty}{\lim\inf} \left[ \frac \lambda 2  \|D_x B_k\|_{L^2}^2 \right]\\
	& \le \underset{k\to\infty}{\liminf}\left[ \frac 1 2 \|f_{B_k}(T)-f_d\|_{L^2}^2 +  \frac \lambda 2 \|D_x B_k\|_{L^2}^2 \right]
	= \underset{k\to\infty}{\lim}\; J(B_k) = M.
	\end{align*}
	By the definition of infimum this proves $J(\B) = M$. \end{proof}

Of course this theorem does not provide uniqueness of a globally optimal solution. In general, the optimization problem may have more than one globally optimal solution and, of course, it may have more than one locally optimal solution. Therefore we will characterize the local minimizers in the following subsections by necessary and sufficient conditions.

\subsection{Necessary conditions for local optimality}

A locally optimal solution is defined as follows:

\begin{definition}
	A control $\B\in\BB$ is called a locally optimal solution of the optimization problem \eqref{OP1} iff there exists $\delta>0$ such that
	\begin{align*}
	J(\B)\le J(B) \quad\text{for all}\quad B\in B_\delta(\B) \cap \BB
	\end{align*}
	where $B_\delta(\B)$ is the open ball in $\VV$ with radius $\delta$ and center~$\B$.\pskip
\end{definition}

To establish necessary optimality conditions of first order we need Fréchet differentiability of the cost functional $J$.

\begin{lemma}
	\label{FDJ-1}
	The cost functional $J$ is Fréchet differentiable on $\BB$ with Fréchet derivative
	\begin{align*}
	J'(B)[H] = \langle f_B(T)-f_d , f_B'(T)[H] \rangle_{L^2(\RR^6)} + \lambda \langle D_x B, D_x H \rangle_{L^2([0,T]\times\RR^3;\RR^{3\times 3})}
	\end{align*}
	for all $H\in \VV$. Let $\B\in\BB$ be a locally optimal solution of the optimization problem \eqref{OP1}. Then
	\begin{align*}
	J'(\B)[H] \begin{cases} =0, &\text{if}\; \B\in\IBB \\ \ge 0, &\text{if}\; \B\in\partial\BB \end{cases}, \quad H\in\BB \text{ with } \B+H \in \BB.
	\end{align*}
\end{lemma}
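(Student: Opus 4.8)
The plan is to write $J = J_1 + J_2$ with tracking term $J_1(B) := \tfrac12\|f_B(T) - f_d\|_{L^2}^2$ and regularization term $J_2(B) := \tfrac\lambda2\|D_x B\|_{L^2}^2$, to establish Fr\'echet differentiability of each summand and read off its derivative, and finally to obtain the necessary conditions by a standard first-variation argument exploiting the convexity of $\BB$ from Lemma \ref{LAC}(a).

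For $J_1$ I would view it as a composition of three maps. First, the field-state operator $B \mapsto f_B$, which by Theorem \ref{FDCSO} (together with the remark extending the statement from $\IBB$ to $\IBBB \supset \BB$) is Fr\'echet differentiable on $\BB$ with derivative $H \mapsto f_B'[H] = f_B^H$ in $C([0,T];L^2)$. Second, the evaluation map $C([0,T];L^2) \ni u \mapsto u(T) \in L^2$, which is linear and bounded and hence equals its own derivative. Third, the smooth quadratic $L^2 \ni u \mapsto \tfrac12\|u - f_d\|_{L^2}^2$, whose derivative at $u$ in direction $w$ is $\langle u - f_d, w\rangle_{L^2}$. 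The chain rule for Fr\'echet derivatives then gives $J_1'(B)[H] = \langle f_B(T) - f_d,\, f_B'(T)[H]\rangle_{L^2}$, where $f_B'(T)[H] := f_B^H(T)$. For $J_2$, the expansion $J_2(B+H) - J_2(B) = \lambda\langle D_x B, D_x H\rangle_{L^2} + \tfrac\lambda2\|D_x H\|_{L^2}^2$ together with the boundedness of $D_x\colon \VV \to L^2$ shows the remainder is of order $\|H\|_\VV^2 = o(\|H\|_\VV)$, so $J_2'(B)[H] = \lambda\langle D_x B, D_x H\rangle_{L^2}$. Summing the two derivatives yields the asserted formula for $J'(B)[H]$.

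For the necessary conditions, let $\B \in \BB$ be locally optimal with radius $\delta > 0$ and fix $H \in \VV$ with $\B + H \in \BB$. By convexity of $\BB$ the whole segment $\B + tH$, $t \in [0,1]$, lies in $\BB$, and for small $t$ it lies in $B_\delta(\B) \cap \BB$; hence $g(t) := J(\B + tH)$ has a one-sided local minimum at $t = 0$. Since $g$ is differentiable with $g'(0) = J'(\B)[H]$ (by the chain rule applied to the affine path $t \mapsto \B + tH$), this forces $g'(0) = J'(\B)[H] \ge 0$, which is the boundary case. If in addition $\B \in \IBB$, then for $H$ of sufficiently small norm both $\B + H$ and $\B - H$ lie in $\BB$; applying the inequality to $H$ and to $-H$ and using the linearity of $J'(\B)$ gives $J'(\B)[H] = 0$, and linearity extends this to every $H \in \VV$, in particular to all $H$ with $\B + H \in \BB$.

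The proof is in essence an assembly of Theorem \ref{FDCSO} with the chain rule and the convexity of $\BB$, so no genuinely new estimate is required; the only point demanding some care is keeping the interior and boundary cases cleanly separated, which rests on the two-sided perturbations available in $\IBB$ versus the merely one-sided feasible directions on $\partial\BB$, and on the linearity of the Fr\'echet derivative.
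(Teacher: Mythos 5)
Your proposal is correct and follows essentially the same route as the paper: Fr\'echet differentiability of $J$ is obtained from Theorem \ref{FDCSO} (extended to all of $\BB$ via the remark on $\IBBB$) together with the chain rule, and the necessary conditions come from the first-variation argument along the segment $t\mapsto \B+tH$, with the two-sided perturbation $\pm H$ giving equality in the interior case. You spell out the decomposition $J=J_1+J_2$ and the convexity of $\BB$ more explicitly than the paper does, but the underlying argument is identical.
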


\begin{proof} As the control-state operator is Fréchet differentiable on $\BB$ so is the cost functional $J$ by chain rule. Thus, the function $[0,1]\ni t\mapsto J(\B+tH)\in\RR$ is differentiable with respect to $t$ and since $\B$ is also a local minimizer of this function, we have
	\begin{align*}
	0 &\le \ddt J(\B + tH)\big\vert_{t=0} = \left(J'(\B+tH)\left[\ddt(\B+tH)\right]\right)\Big\vert_{t=0} = J'(\B)[H]
	\end{align*}
	for any $H\in\BB$ with $B+H\in\BB$. If $\B$ is an inner point of $\BB$ this line even holds with "$=$" instead of "$\le$".
\end{proof}

If we consider $\BB$ as a subset of $L^2([0,T]\times \RR^3;\RR^3)$ it might be possible to find an adjoint operator $\big(f_B'(T)\big)^*\hspace{-3pt}: C([0,T];L^2) \to L^2([0,T]\times \RR^3;\RR^3)$ of $f_B'(T)$. Then, by integration by parts,\vspace{-4mm}
\begin{align*}
J'(B)[H] &= \langle f_B(T)-f_d , f_B'(T)[H] \rangle_{L^2(\RR^6)} + \lambda \sum_{i=1}^3 \langle \delxi B, \delxi H \rangle_{L^2([0,T]\times\RR^3;\RR^3)} \\
& = \langle \big(f_B'(T)\big)^*[f_B(T)-f_d]  - \lambda\;  \laplace_x B,H \rangle_{L^2([0,T]\times\RR^3;\RR^3)}
\end{align*}
for all $H\in\VV$. This means that the derivative $J'$ would have the explicit description $J'(B) = \big(f_B'(T)\big)^*[f_B(T)-f_d]  - \lambda\; \laplace_x B$. If now $\B \in \interior \BB$ were a locally optimal solution it would satisfy the semilinear Poisson equation
\begin{align*}
\label{ADJOP}
-\laplace_x B = -\frac 1 \lambda \big(f_B'(T)\big)^*[f_B(T)-f_d] \;.
\end{align*}
In general such an adjoint operator is not uniquely determined. This means that we cannot deduce uniqueness of our optimal solution. A common technique to find an adjoint operator is the \textbf{Lagrangian technique}. For $B\in\BB$ and $f,g\in H^1(]0,T[\times\RR^6)$ with $\supp f(t)\subset\BR$ for all $t\in[0,T]$ we define
\begin{align*}
\Lag(f,B,g) &:= \frac 1 2 \|f(T)-f_d\|_{L^2} + \frac \lambda 2  \|D_x B\|_{L^2}^2 \\
&\quad - \int\limits_{[0,T]\times \RR^6} \hspace{-10pt}\big( \delt f +v\cdot \delx f - \delx\psi_f\cdot\delv f + (v\times B)\cdot\delv f \big)\, g \dtxv.
\end{align*}
$\Lag$ is called the \textbf{Lagrangian}. Obviously, by integration by parts,
\begin{align*}
\Lag(f,B,g)
&= \frac 1 2 \|f(T)-f_d\|_{L^2} + \frac \lambda 2 \|D_x B\|_{L^2}^2 + \langle g(0),f(0) \rangle_{L^2} - \langle g(T),f(T) \rangle_{L^2}\\
&\quad + \int\limits_{[0,T]\times \RR^6}\hspace{-10pt} \big( \delt g +v\cdot \delx g - \delx\psi_f\cdot\delv g + (v\times B)\cdot\delv g \big)\; f \dtxv.
\end{align*}
In the definition of the Lagrangian $f$, $B$ and $g$ are independent functions. However, inserting $f=f_B$ yields
\begin{align}
J(B) = \Lag(f_B,B,g), \quad B\in\BB,\; g\in H^1(]0,T[\times\RR^6)\;.
\end{align}
It is important that this equality does not depend on the choice of $g$. Since $\Lag$ is Fréchet differentiable with respect to $f$ in the $H^1(]0,T[\times\RR^6)$-sense and with respect to $B$ in the $\VV$-sense we can use this fact to compute the derivative of $J$ alternatively. By chain rule,
\begin{align}
\label{DJLAG}
J'(B)[H] = \big(\partial_f \Lag\big)(f_B,B,g)\big[f_B'[H]\big] + \big(\partial_B \Lag\big)(f_B,B,g)[H]
\end{align}
for all $B\in\BB$, $H\in\VV$ and any $g\in H^1(]0,T[\times\RR^6)$. Here $\partial_f \Lag$ and $\partial_B \Lag$ denote the partial Fréchet derivative of $\Lag$ with respect to $f$ and $B$. We will now fix $f,g$ and $B$. Then
\begin{align}
\label{DFLAG}
(\partial_f \Lag)(f,B,g)[h]
%
%
&= \langle f(T)-f_d, h(T) \rangle_{L^2} - \langle g(T) , h(T) \rangle_{L^2} + \langle g(0), h(0) \rangle_{L^2} \notag\\
&\; + \hspace{-4pt} \int\limits_{[0,T]\times \RR^6} \hspace{-12pt} \big( \delt g +v\cdot \delx g - \delx\psi_f\cdot\delv g + (v\times B)\cdot\delv g \big)\, h\, \mathrm d(t,x,v) \notag\\
&\; - \hspace{-4pt} \int\limits_{[0,T]\times \RR^6} \PHI_{f,g}(t,x,v)\; h \dtxv
\end{align}
for all $h\in H^1(]0,T[\times\RR^6)$ with $\supp h(t) \subset\BR,\, t\in[0,T]$ where $\PHI_{f,g}$ is given by \eqref{DEFPHI}. Moreover,
\begin{displaymath}
\refstepcounter{equation}
\label{DBLAG}
\begin{aligned}
&(\partial_B \Lag)(f,B,g)[H] = \lambda \langle D_x B, D_x H \rangle_{L^2} \;- \hspace{-4pt}\int\limits_{[0,T]\times \RR^6} (v\times H)\cdot\delv f\; g \dtxv \\
& \quad = \hspace{-4pt} \int\limits_{[0,T]\times \RR^3} \left[ - \lambda \laplace_x B +  \int\limits_{\RR^3} v\times \delv f\; g \dv \right] \cdot H \dtx
\end{aligned} \hspace{9pt}\textnormal{(\theequation)}
\end{displaymath}
for all $H\in\VV$. Apparently, the derivative with respect to $B$ looks pretty nice while the derivative with respect to $f$ is rather complicated. However if we insert those terms in \eqref{DJLAG} we can still choose $g$. Now the idea of the Lagrangian technique is to choose $g$ in such a way that the term $(\partial_f \Lag)(f_B,B,g)[f_B'[H]]$ vanishes. \pskip

We consider the following final value problem which is referred to as the \textbf{costate equation} or the \textbf{adjoint equation}:
\begin{align}\vspace{-2mm}
\label{COSTEQ}
\begin{cases}
\delt g + v\cdot\delx g  - \delx\psi_{f_B}\cdot\delv g + (v\times B)\cdot\delv g = \PHI_{f_B,g}\,\chi \\[0.15cm]
g\big\vert_{t=T}=f_B(T)-f_d
\end{cases}
\end{align}
where $\chi \in C^2_c(\RR^6;[0,1])$ with $\chi = 1$ on $B_{R_Z}(0)$ and $\supp \chi \in B_{2R_Z}(0)$ denotes an arbitrary but fixed cut-off function. Here $R_Z$ is the constant from Lemma \ref{ES1}, i.e., for all $s,t\in[0,T]$, $Z_B(s,t,\BR) \subset \BRZ$. Existence and uniqueness of a strong solution to this system will be established in the following theorem:

%
%

\begin{theorem}
	\label{ADJS}
	\hypertarget{HADJS}
	Let $B\in\BB$ be arbitrary. The costate equation \eqref{COSTEQ} has a unique strong solution
	$g_B\in W^{1,2}\big(0,T;C_b(\RR^6)\big)\cap C\big([0,T];C^1_b(\RR^6)\big) \cap L^\infty\big(0,T;H^2(\RR^6)\big)$ with compact support $\supp g_B(t) \subset B_{R^*}(0)$ for all ${t\in[0,T]}$ and some radius $R^*>0$ depending only on $\mathring f,f_d,T,K$ and $\beta$. \pskip
	
	In this case $g_B\big\vert_{\BR}$ does not depend on the choice of $\chi$. \pskip
	
	Moreover $g_B$ depends Lipschitz/Hölder-continuously on $B$ in such a way that there exists some constant $C\ge 0$ depending only on $\mathring f,f_d,T,K,\beta$ and $\|\chi\|_{C^1_b}$ such that for all $B,H\in\BB$,
	\begin{align}
	\label{HCG}
	\|g_B - g_H\|_{C([0,T];C_b)} &\le C\|B-H\|_{\VV},\\
	\|g_B - g_H\|_{W^{1,2}(0,T;C_b)} + \|g_B - g_H\|_{C([0,T];C^1_b)} &\le C\|B-H\|_{\VV}^{\gamma}.
	\end{align}
\end{theorem}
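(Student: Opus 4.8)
The plan is to recognize the costate equation \eqref{COSTEQ} as a final-value instance of the general linear system \eqref{LVL}. Reading off the coefficients one sets $\Ma=f_B$, $\MA=-\delx\psi_{f_B}$, $\MB=B$, $\Mchi=\chi$, $\MC=0$, $\Mb=0$ and prescribes the final datum $\Mf=f_B(T)-f_d$; then $\Mchi\,\PHI_{\Ma,g}=\chi\,\PHI_{f_B,g}$ is exactly the right-hand side, and the support conditions \eqref{SC1}, \eqref{SC2} hold with $r_0$ large (using $\supp f_B(t)\subset\BR$) and $r_1=R_Z$, $r_2=2R_Z$. The one hypothesis of Corollary~\ref{WSLVL}(b) that \emph{fails} is the data regularity \eqref{RCf}: by Theorem~\ref{GWS} we only know $f_B(T)\in C^1_b\cap W^{2,\beta}\subset C^{1,\gamma}$, not $C^2_c$. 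I would therefore not invoke Corollary~\ref{WSLVL}(b) verbatim but reprove it by an approximation tailored to this difficulty: pick $(B_k)\subset\MM$ with $B_k\to B$ in $\WW$ (Lemma~\ref{LAC}(d)). By Theorem~\ref{GCS} each $f_{B_k}\in C([0,T];C^2_b)$, so now \emph{all} coefficients together with the datum $f_{B_k}(T)-f_d\in C^2_c$ satisfy \eqref{RCa}--\eqref{RCchi}, and Proposition~\ref{CSLVL} in its final-value form (Remark~(a), formula \eqref{EXPLVL2}) produces unique classical solutions $g_k:=g_{B_k}\in C^1([0,T]\times\RR^6)$ with a common compact support.

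The analytic core is to pass $k\to\infty$, exactly mirroring the derivation of Theorem~\ref{GWS} from Theorem~\ref{GCS} via Lemma~\ref{ES1} and Lemma~\ref{LIP}. First I would obtain \emph{uniform} bounds: inserting \eqref{EXPLVL2}, the flow estimates of Lemma~\ref{LVLZ}/Lemma~\ref{ES1}, the pointwise bounds \eqref{ESTPHI3}, \eqref{ESTPHI4} and Calderon--Zygmund \eqref{ESTPSI}, and closing standard and \emph{nonlinear} Gronwall inequalities (with exponent $\tfrac{\beta-1}{\beta}$, as in Lemma~\ref{ES1}) yields
\[
\|g_k\|_{W^{1,2}(0,T;C_b)}+\|g_k\|_{C([0,T];C^1_b)}+\|g_k\|_{L^\infty(0,T;W^{2,\beta})}\le C
\]
uniformly in $k$; since the supports lie in a fixed ball and $\beta>2$, the last term controls $\|g_k\|_{L^\infty(0,T;H^2)}$. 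Next, comparing the representations of $g_k$ and $g_j$ and using $\|f_{B_k}-f_{B_j}\|$, $\|Z_{B_k}-Z_{B_j}\|$ and $\|B_k-B_j\|_\WW\to0$ (Corollary~\ref{WLIP}, Lemma~\ref{LIP}), Gronwall shows $(g_k)$ is Cauchy in $W^{1,2}(0,T;C_b)\cap C([0,T];C^1_b)$. The strong limit together with a weak-$*$ limit in $L^\infty(0,T;H^2)$, identified as in Theorem~\ref{GWS}, is the desired $g_B$; one then checks conditions (i)--(iv) of Definition~\ref{DWSLVL}. Uniqueness follows as in Proposition~\ref{UNIQS}: for two strong solutions the difference $d$ solves the homogeneous equation with source $\chi\PHI_{f_B,d}$, and the $L^2$-energy identity gives $\big|\tfrac{\mathrm d}{\mathrm dt}\|d(t)\|_{L^2}^2\big|\le C\|d(t)\|_{L^2}^2$ (using \eqref{ESTPHI1} and that the transport part is divergence-free), which run backward from $d(T)=0$ forces $d\equiv0$.

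The independence of $g_B|_{\BR}$ from $\chi$ is the content of Remark~(b) after Proposition~\ref{CSLVL}: since $\supp f_B(t)\subset\BR$, the nonlocal term $\PHI_{f_B,g_B}$ depends only on $g_B|_{\BR}$, while $\chi\equiv1$ on $\BRZ\supset Z_B(s,t,\BR)$ by Lemma~\ref{ES1}; hence for $z\in\BR$ the factor $\chi(Z_B(s,t,z))=1$ and the representation \eqref{EXPLVL2} closes on $\BR$ without reference to $\chi$, so $g_B|_{\BR}$ is $\chi$-independent.

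Finally, for the continuity estimates \eqref{HCG} I would compare $g_B$ and $g_H$ through their smooth approximants and their representations \eqref{EXPLVL2}. The Lipschitz bound in $C([0,T];C_b)$ comes from the Lipschitz dependence of both the datum $f_\bullet(T)$ (Corollary~\ref{WLIP}) and the flow $Z_\bullet$ (Lemma~\ref{LIP}, \eqref{IEQ:l1}, \eqref{IEQ:L1}) followed by Gronwall; the Hölder-$\gamma$ bound on the $z$-derivatives, and via the equation on $\delt g$, uses \eqref{IEQ:l2}, \eqref{IEQ:L2} together with the Hölder continuity of $D_x^2\psi$ along the flow, \eqref{HOELDPSI} and Lemma~\ref{NPOT}(c); letting the approximation index tend to infinity transfers the inequalities to $g_B,g_H$ (recalling $\|\cdot\|_\WW\le\|\cdot\|_\VV$). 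I expect the \textbf{main obstacle} to be twofold: coping with the deficient regularity of the final datum $f_B(T)$, which is what forces the smooth-approximation scheme instead of a one-line appeal to Corollary~\ref{WSLVL}(b); and establishing the \emph{uniform} second-order bound, which requires differentiating \eqref{EXPLVL2} twice and closing a nonlinear Gronwall inequality for $\|D_z^2 g_k\|_{L^\beta}$ while simultaneously controlling the self-consistent term $\PHI_{f_B,g}$ through the Calderon--Zygmund estimates \eqref{ESTPSI}.
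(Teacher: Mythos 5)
Your reduction of \eqref{COSTEQ} to the final-value form of \eqref{LVL} and your diagnosis of the obstruction (the datum $f_B(T)-f_d$ fails \eqref{RCf}) are both correct, but your repair --- approximating the datum by $f_{B_k}(T)-f_d$ along with all the coefficients --- has a genuine gap exactly at the place you flag as the ``main obstacle''. In the Gronwall estimates for $\delz g_k-\delz g_j$ (Cauchy property in $C([0,T];C^1_b)$) and for the difference of costates to two fields (second line of \eqref{HCG}), differentiating the representation formula \eqref{EXPLVL2} produces, besides the terms you list, a term of the form $\big[D_z f_{B_j}(T)\big]\big(Z_k(T,t,z)\big)-\big[D_z f_{B_j}(T)\big]\big(Z_j(T,t,z)\big)$, i.e.\ the \emph{same} approximate datum evaluated along two \emph{different} flows. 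The estimates \eqref{IEQ:l2}, \eqref{IEQ:L2} that you cite do not touch this term: it requires a modulus of continuity of $D_z f_{B_j}(T)$ uniform in $j$, and for the quantitative bound \eqref{HCG} a uniform H\"older or Lipschitz modulus. The only uniform second-order information available is $\|D_z^2 f_{B_j}\|_{L^\infty(0,T;L^\beta)}\le c_6$ from Lemma \ref{ES1}, and $W^{1,\beta}(\RR^6)$ embeds into a H\"older space only when $\beta>6$, whereas the paper assumes merely $\beta>3$. Hence for $3<\beta\le 6$ your scheme cannot close the estimate $\|g_B-g_H\|_{C([0,T];C^1_b)}\le C\|B-H\|_\VV^{\gamma}$. (Plain existence and the first line of \eqref{HCG} could still be rescued: the first line needs only $\|\delz f_{B_k}(T)\|_\infty\le c_2$, and for the Cauchy property one can note that a uniformly convergent sequence of compactly supported continuous functions is uniformly equicontinuous --- but this yields no rate, and you do not invoke it.)

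The paper circumvents all of this with a structural trick that your proposal misses: by linearity, $g_B=\tg_B-h_B$, where $\tg_B$ solves the costate-type equation with final datum $f_B(T)$ and $h_B$ the one with final datum $f_d$. Since $\PHI_{f_B,f_B}=0$ (integrate by parts in $v$) and $f_B$ satisfies the Vlasov equation, $f_B$ itself solves the $\tg$-system, so by uniqueness $\tg_B=f_B$ and therefore $g_B=f_B-h_B$. Every property of the $f_B$-part is then already known from Theorem \ref{GWS} and Corollary \ref{WLIP}, while the $h_B$-part has the \emph{fixed} datum $f_d\in C^2_c(\RR^6)$ and thus falls squarely under Corollary \ref{WSLVL}(b); in the comparison estimates the problematic term above becomes $\big[Df_d\big](Z_{B_k})-\big[Df_d\big](Z_{H_k})$, which is bounded by $\|f_d\|_{C^2_b}\,\|Z_{B_k}-Z_{H_k}\|_\infty\le C\|B_k-H_k\|_\VV$. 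In both halves of this decomposition the datum is a fixed $C^2_c$ function ($\mathring f$ through the state, $f_d$ through $h_B$), which is precisely what makes the derivative estimates close; in your scheme the datum varies with the approximation index, and that is what breaks them.
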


\begin{remark}
	Note that only the values of $g_B$ on the ball $\BR$ will matter in the following approach. Therefore it is essential that those values are not influenced by the cut-off function $\Mchi$.
\end{remark}

\begin{proof}
	$\,$\textit{Step 1}: Obviously the system \eqref{COSTEQ} has a unique strong solution $g_B$ in the sense of Corollary~\ref{WSLVL}\,(a). Unfortunately the coefficients do not satisfy the stronger regularity conditions of Corollary~\ref{WSLVL}\,(b) as the final value $f_B(T)-f_d$ is not in $C^2_c(\RR^6)$. However, because of linearity, it holds that $g_B = \tg_B - h_B$ where $\tg_B$ is a solution of
	\begin{align*}
	\delt \tg + v\cdot\delx \tg  - \delx\psi_{f_B}\cdot\delv \tg + (v\times B)\cdot\delv \tg = \PHI_{f_B,\tg}\ \chi, \qquad 
	\tg\big\vert_{t=T}=f(T)
	\end{align*}
	and $h_B$ is a solution of
	\begin{align*}
	\delt h + v\cdot\delx h  - \delx\psi_{f_B}\cdot\delv h + (v\times B)\cdot\delv h = \PHI_{f_B,h}\ \chi, \qquad 
	h\big\vert_{t=T}=f_d
	\end{align*}
	Now the first system has a unique strong solution in the sense of Corollary~\ref{WSLVL}\,(a) and the second one possesses a strong solution in the sense of Corollary~\ref{WSLVL}\,(b) since ${f_d\in C^2_c(\RR^6)}$. Indeed the solution $\tg_B$ is much more regular. As $\PHI_{f_B,f_B}=0$ one can easily see that $f_B$ is a solution of the first system and thus, because of uniqueness, $\tg_B=f_B$. Consequently $g_B = f_B - h_B$ lies in the space $W^{1,2}(0,T;C_b)\cap C([0,T];C^1_b)$. Due to Corollary~\ref{WSLVL}\,(b) the values of $h_B$ on $B_R(0)$ do not depend on the choice of $\chi$. Of course $f_B$ does not depend on $\chi$ either and hence $g_B\big\vert_{\BR}$ does not depend on the choice of $\chi$. \pskip
	
	\textit{Step 2}: We will now prove the Hölder estimate. It suffices to establish the result for $h.$ as the result has already been proved for $f.$ in Corollary \ref{WLIP}. Therefore let $B,H\in\BB$ be arbitrary and let $C>0$ denote some generic constant depending only on $\mathring f$, $f_d$, $T$, $K$, $\beta$ and $\|\chi\|_{C^2_b}$. According to Lemma~\ref{SOBBOCH} there exist sequences $(B_k),(H_k)\subset \MM$ such that 
	\begin{align*}
	\|B_k - B\|_{\VV} \to 0, \quad \|H_k - H\|_{\VV} \to 0
	\end{align*}
	if $k\to\infty$. By Corollary \ref{WSLVL}\,(b) (and its proof) the induced strong solutions $h_{B_k}$ and $h_{H_k}$ satisfy\vspace{-2mm}
	\begin{gather*}
	h_{B_k}\to h_B ,\; h_{H_k}\to h_H \quad\text{in}\; W^{1,2}(0,T;C_b) \cap C([0,T];C^1_b),\\
	\|h_{B_k}\|_{W^{1,2}(0,T;C_b)} + \|h_{B_k}\|_{C([0,T];C^1_b)} \le C,\\ \|h_{H_k}\|_{W^{1,2}(0,T;C_b)}+ \|h_{H_k}\|_{C([0,T];C^1_b)} \le  C.
	\end{gather*}
	The constant $C$ does not depend on $k$ since $\|B_k\|_{\VV}$ and $\|H_k\|_{\VV}$ are bounded by $2K$. Also note that there exists some constant $\varrho>0$ depending only on $\mathring f,f_d,T,K$ and $\beta$ (but not on $k$) such that ${\supp h_{B_k} \subset B_{\varrho}(0)}$ and also $\supp h_{H_k} \subset B_{\varrho}(0)$. As $h_{B_k}$ and $h_{H_k}$ are classical solutions they satisfy the implicit representation formula \eqref{EXPLVL2}. We also know from Lemma~\ref{LIP} (with $\varrho$ instead of $R$) that
	\begin{gather*}
	\|f_{B_k}(t) - f_{H_k}(t)\|_\infty \le C\; \|B_k - H_k\|_{\VV}\;,\\
	\|D_z f_{B_k}(t) - D_z f_{H_k}(t)\|_\infty \le C\; \|B_k - H_k\|_{\VV}^\gamma\;,\\
	\|Z_{B_k}(t) - Z_{H_k}(t)\|_{L^\infty(B_\varrho(0))} \le C\; \|B_k - H_k\|_{\VV}\\
	\|D_z Z_{B_k}(t) - D_z Z_{H_k}(t)\|_{L^\infty(B_\varrho(0))}\le C\; \|B_k - H_k\|_{\VV}^\gamma
	\end{gather*}
	for all $t\in[0,T]$. Together with Lemma \ref{ES1} we can show that
	\begin{align*}
	&\|h_{B_k}(t) - h_{H_k}(t)\|_{L^\infty} \\[2mm]
	&\le \|f_d\|_{C^1_b} \;\|Z_{B_k}(T,t,\cdot)-Z_{H_k}(T,t,\cdot)\|_{L^\infty(B_{\varrho}(0))}\\
	&\quad + \int\limits_t^T \|\PHI_{f_{B_k},h_{B_k}}(s,Z_{B_k}(s,t,\cdot)) - \PHI_{f_{H_k},h_{H_k}}(s,Z_{H_k}(s,t,\cdot))\|_{L^\infty(B_{\varrho}(0))}\ds \displaybreak\\
	&\le C\; \|B_k-H_k\|_{\VV}\\
	&\quad + C\; \int\limits_t^T \|h_{B_k}(s)\|_{L^\infty}\; \|D_z f_{B_k}(s)\|_{L\infty}\; \|Z_{B_k}(s)-Z_{H_k}(s)\|_{L^\infty(B_{\varrho}(0))} \ds\\
	&\quad + C\; \int\limits_t^T \|\delv h_{B_k}(s)\|_{L^\infty}\; \|f_{B_k}(s)- f_{H_k}(s)\|_{L^\infty}\; \ds\\
	&\quad + C\; \int\limits_t^T \|h_{B_k}(s)- h_{B_k}(s)\|_{L^\infty}\; \|\delv f_{H_k}(s)\|_{L^\infty}  \ds \\
	&\le C\; \|B_k-H_k\|_{\VV} + C\; \int\limits_t^T \|h_{B_k}(s)- h_{B_k}(s)\|_{L^\infty}\ds
	\end{align*}
	and hence $\|h_{B_k} - h_{H_k}\|_{C([0,T];C_b)} \le C\; \|B_k-H_k\|_{\VV}$. By a similar computation,
	\begin{align*}
	\|\delz h_{B_k}(t) - \delz h_{H_k}(t)\|_{L^\infty} 
	\le C \|B_k-H_k\|_{\VV}^{\gamma} + C\hspace{-1pt}\int\limits_t^T\hspace{-2pt} \|\delz h_{B_k}(s)- \delz h_{B_k}(s)\|_{L^\infty}\;\mathrm ds
	\end{align*}
	and consequently $\|\delz h_{B_k}(t) - \delz h_{H_k}\|_{C([0,T];C_b)} \le C\; \|B_k-H_k\|_{\VV}^{\gamma}$ by Gronwall's lemma.\\ 
	Expressing $\delt h_{B_k}$ and $\delt h_{H_k}$ by their corresponding Vlasov equation then yields \linebreak$\|\delt h_{B_k} - \delt h_{H_k}\|_{L^2(0,T;C_b)} \le C\; \|B_k-H_k\|_{\VV}^{\gamma}$. In summary, we have established that
	\begin{align*}
	\|h_{B_k} - h_{H_k}\|_{C([0,T];C_b)}\le C\; \|B_k-H_k\|_{\VV},\\
	\|h_{B_k} - h_{H_k}\|_{W^{1,2}(0,T;C_b)} + \|h_{B_k} - h_{H_k}\|_{C([0,T];C^1_b)}\le C\; \|B_k-H_k\|_{\VV}^{\gamma}.
	\end{align*}
	For $k\to\infty$ this directly implies that
	\begin{align*}
	\|h_{B} - h_{H}\|_{C([0,T];C_b)}\le C\; \|B-H\|_{\VV},\\
	\|h_{B} - h_{H}\|_{W^{1,2}(0,T;C_b)} + \|h_{B} - h_{H}\|_{C([0,T];C^1_b)}\le C\; \|B-H\|_{\VV}^{\gamma}.
	\end{align*}
	and hence
	\begin{align*}
	\|g_{B} - g_{H}\|_{C([0,T];C_b)}\le C\; \|B-H\|_{\VV},\\
	\|g_{B} - g_{H}\|_{W^{1,2}(0,T;C_b)} + \|g_{B} - g_{H}\|_{C([0,T];C^1_b)}\le C\; \|B-H\|_{\VV}^{\gamma}.
	\end{align*}

	\textit{Step 3}: We must still prove that $g_B\in L^\infty(0,T;H^2)$. As $f_B\in L^\infty(0,T;H^2)$ has already been established in Theorem \ref{GWS} it suffices to show that $h_B$ is twice weakly differentiable with respect to $z$ and $D^2_z h_B\in L^\infty(0,T;L^2)$. Recall that for any $k\in\NN$, $f_{B_k}\in C([0,T];C^2_b)$ according to Theorem \ref{GCS} and $h_{B_k}\in C([0,T];C^1_b)$ according to Theorem \ref{CSLVL}. Thus for all $i\in\{1,2,3\}$,
	\begin{gather*}
	\delxi\delvi f_{B_k}\, h_{B_k} + \delvi f_{B_k}\, \delxi h_{B_k} \in C([0,T];C_b),\\
	\supp \big[\delxi\delvi f_{B_k}\, h_{B_k} + \delvi f_{B_k}\, \delxi h_{B_k}\big](t) \subset \BR, \quad \text{for all}\; t\in [0,T],\\
	\psi_{(\delxi\delvi f_{B_k}\, h_{B_k} + \delvi f_{B_k}\, \delxi h_{B_k})} \in C\big([0,T];C^1_b\big) \cap C\big([0,T];H^2(B_{r}(0))\big)\,,\quad r>0.
	\end{gather*}
	The third line follows from Lemma \ref{NPOT}. Consequently,
	\begin{align*}
	&\PHI_{f_{B_k},h_{B_k}} = \sum_{i=1}^3 \delxi\psi_{\delvi f_{B_k}\,h_{B_k}} = \sum_{i=1}^3 \psi_{(\delxi\delvi f_{B_k}\, h_{B_k} + \delvi f_{B_k}\, \delxi h_{B_k})}
	\end{align*}
	lies in $C\big([0,T];C^1_b\big) \cap C\big([0,T];H^2(B_{r}(0))\big)$ for any $r>0$ and hence
	\begin{align}
	\label{EQZBK}
	\PHI_{f_{B_k},h_{B_k}}\,\chi \in C\big([0,T];C^1_b\big) \cap C\big([0,T];H^2\big)
	\end{align}
	since $\chi$ is compactly supported. We also know from Lemma \ref{ES1} (with $\varrho$ instead of $R$) that $Z_{B_k}$ is twice continuously differentiable with respect to $z$ and
	\begin{align*}
	\big\|  t\mapsto Z_{B_k}(s,t,\cdot)  \big \|_{L^\infty(0,T;H^2(B_\varrho(0)))} \le C,\quad s\in[0,T].
	\end{align*}
	Now recall the implicit representation formula \eqref{EXPLVL2} for $h_{B_k}$ that is\vspace{-1mm}
	\begin{align}
	\label{EQHBK}
	h_{B_k}(t,z) = f_d\big(Z_{B_k}(T,t,z)\big) - \int\limits_t^T \big[ \PHI_{f_{B_k},h_{B_k}}\, \chi \big]\big( s,Z_{B_k}(s,t,z) \big) \ds
	\end{align}
	for all $(t,z) \in [0,T]\times \RR^6$. As $f_d\in C^2_c(\RR^6)$ and $Z_{B_k}(T,t,\cdot)\in C^2(\RR^6)$, the term $f_d(Z_{B_k}(T,t,z))$ is twice continuously differentiable with respect to $z$ by chain rule. 
	By approximating $\PHI_{f_{B_k},h_{B_k}}\,\chi$ by sufficiently smooth functions one can easily show that the integral term of \eqref{EQHBK} is twice weakly differentiable and the derivatives can be computed by chain rule (with weak instead of classical derivatives if necessary). Now, one can show that the weak derivative $\delzi\delzj h_{B_k}$ can be bounded by
	\begin{align*}
	\|\delzi\delzj h_{B_k}(t)\|_{L^2}
	%
	&\le C + C\,\|Z_{B_k}(0) \|_{H^2(B_{\varrho}(0))} + C\,\int\limits_0^T  \|Z_{B_k}(s)\|_{H^2(B_{\varrho}(0))} \ds\;.
	\end{align*}
	By \eqref{EQZBK} this finally yields $\|\delzi\delzj h_{B_k}\|_{L^\infty(0,T;L^2)}^2 \le C$. Then $(\delzi\delzj h_{B_k})$ is converging with respect to the weak-*-topology on $[L^1(0,T;L^2)]^*=L^\infty(0,T;L^2)$ up to a subsequence. Because of uniqueness, the weak-*-limit of the sequence $(\delzi\delzj h_{B_k})$ must be $\delzi\delzj h_{B}$ and especially $h_B \in L^\infty(0,T;H^2)$. This completes the proof.
\end{proof}

Now inserting the state $f_B$ and its costate $g_B$ in \eqref{DJLAG} yields
\begin{align}
\label{EQFDJ}
J'(B)[H] = (\partial_B \Lag)(f_B,B,g_B)\big[H\big],\quad H\in\VV
\end{align}
since $f_B'[H]\big\vert_{t=0} = 0$. This provides a necessary optimality condition:

\begin{theorem}
	\label{NOC1}
	\hypertarget{HNOC1} $\;$
	\begin{itemize}
		\item[\textnormal{(a)}]
		The Fréchet derivative of $J$ at the point $B\in\BB$ is given by 
		\begin{align*}
		J'(B)[H] = 	\hspace{-5pt}\int\limits_{[0,T]\times\RR^3}\hspace{-5pt} \left( -\lambda \laplace_x B + \int\limits_{\RR^3} v\times \delv f_B \; g_B \dv \right)\cdot H \dtx, \quad H\in\VV.
		\end{align*}
		%
		\item[\textnormal{(b)}]
		Let us assume that $\B\in\BB$ is a locally optimal solution of the optimization problem \eqref{OP1}. Then for all $B\in\BB$,\vspace{-1mm}
		\begin{align*}
		\int\limits_{[0,T]\times\RR^3} \hspace{-7pt} \left( -\lambda \laplace_x \B + \int\limits_{\RR^3} v\stimes \delv f_\B \; g_\B \dv \right)\hspace{-2pt}\cdot\hspace{-1pt} (B-\bar B) \; \mathrm d(t,x)
		\begin{cases}
		= 0, &\hspace{-5pt} \text{if } \B \in \IBB \\
		\ge 0, &\hspace{-5pt} \text{if } \B \in \partial\BB
		\end{cases}\hspace{-5pt}.
		\end{align*}
		\item[\textnormal{(c)}]
		If we additionally assume that $\B\in\IBB$ then $\B$ satisfies the semilinear Poisson equation
		\begin{align}
		\label{FRJ}
		-\laplace_x \B = -\frac 1 \lambda \int\limits_{\RR^3} v\times \delv f_\B \; g_\B \dv
		.	\end{align}
		In this case $\B\in C([0,T];C^2_b(\RR^3))$ with
		\begin{align}
		\label{FRJ-2}
		\B(t,x) = -\frac 1 {4\pi\lambda} \iint \frac{1}{|x-y|}\; w\times \delv f_\B(t,y,w)\; g_\B(t,y,w) \;\mathrm dw \mathrm dy
		\end{align}
		for all $t\in[0,T]$ and $x\in\RR^3$. Thus $\B$ does not depend on the choice of $\chi$ as long as $\chi = 1$ on $\BRZ$ as it only depends on $g_\B\big\vert_\BR$. 
	\end{itemize}
\end{theorem}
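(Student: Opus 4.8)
All three assertions follow by assembling the identities established just above, so the plan is to take them in order. Part (a) is almost immediate: equation \eqref{EQFDJ} already reduces $J'(B)[H]$ to the single term $(\partial_B\Lag)(f_B,B,g_B)[H]$, the contribution $(\partial_f\Lag)(f_B,B,g_B)\big[f_B'[H]\big]$ having been annihilated by the choice of $g_B$. I would recall \emph{why} it vanishes, reading off \eqref{DFLAG}: $g_B$ solves the costate equation \eqref{COSTEQ}, its terminal datum $g_B(T)=f_B(T)-f_d$ cancels the two terminal boundary terms, the initial condition $f_B'[H]\big|_{t=0}=0$ from \eqref{FDEQ} kills the initial boundary term, and $\chi\equiv 1$ on the support of $f_B'[H]$ makes the two $\PHI$-terms coincide. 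It then remains to substitute $f=f_B$, $g=g_B$ into the explicit expression \eqref{DBLAG}. Here I would note that, since $B\in\WW=L^2(0,T;W^{2,\beta})$, the Laplacian $\laplace_x B$ is a genuine $L^2(0,T;L^\beta)$ function, so the integration by parts in $x$ underlying \eqref{DBLAG} is legitimate and yields the stated formula.

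For part (b) I would simply feed the formula of (a) into the variational inequality of Lemma \ref{FDJ-1}. Given a locally optimal $\B$ and an arbitrary $B\in\BB$, the direction $H:=B-\B$ is admissible because $\B+H=B\in\BB$; Lemma \ref{FDJ-1} then gives $J'(\B)[B-\B]\ge 0$, with equality when $\B\in\IBB$. Writing out $J'(\B)[B-\B]$ via (a) reproduces verbatim the claimed (in)equality.

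Part (c) is where the actual work lies. Assume $\B\in\IBB$, so that the equality case of (b) holds for every $B\in\BB$. Because $\B$ is interior, $\B\pm tH\in\BB$ for every $H\in\VV$ and all sufficiently small $t>0$; choosing $B-\B=\pm tH$ and dividing by $t$ shows that the linear functional
\begin{align*}
H\longmapsto \int\limits_{[0,T]\times\RR^3}\Big(-\lambda\laplace_x\B+\int_{\RR^3} v\times\delv f_\B\,g_\B\dv\Big)\cdot H\dtx
\end{align*}
vanishes on all of $\VV$. Since $C^\infty_c([0,T]\times\RR^3;\RR^3)\subset\VV$, the bracketed field is zero as a distribution, hence almost everywhere, which is exactly \eqref{FRJ}. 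To reach the explicit representation \eqref{FRJ-2} I would argue as in Lemma \ref{NPOT}: writing $S(t,x):=\int_{\RR^3} v\times\delv f_\B\,g_\B\dv$, the source $S(t,\cdot)$ is compactly supported (as $\supp f_\B(t)\subset\BR$) and Hölder continuous in $x$, the latter from the Sobolev embedding $W^{2,\beta}\hookrightarrow C^{1,\gamma}$ applied to $f_\B\in L^\infty(0,T;W^{2,\beta})$ together with $g_\B\in C([0,T];C^1_b)$. As $\B(t)\in H^1(\RR^3)$ for almost every $t$ (because $\B\in\HH$), the uniqueness of the decaying solution of $-\laplace_x\B(t)=\tfrac{1}{\lambda} S(t)$—via Weyl's lemma, exactly as in Lemma \ref{NPOT}(a)—identifies $\B(t)$ with the Newtonian potential of $\tfrac{1}{4\pi\lambda}S(t)$, which is \eqref{FRJ-2}; the Schauder estimate for this potential then upgrades $\B$ to $C([0,T];C^2_b)$. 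Finally, since $S$ only samples $g_\B$ on $\BR$, where $\delv f_\B$ lives, and $g_\B|_{\BR}$ is $\chi$-independent by Theorem \ref{ADJS}, the field $\B$ is independent of the cut-off.

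The main obstacle is the regularity bootstrap in part (c): passing from a field $\B$ that is a priori only $H^1$ in $x$ to a classical $C^2$ solution of the Poisson equation. This rests entirely on showing that the source $S$ is Hölder continuous in $x$ uniformly in $t$, for which the sharper bounds $f_\B\in L^\infty(0,T;W^{2,\beta})$ and $g_\B\in C([0,T];C^1_b)$ from Theorems \ref{GWS} and \ref{ADJS} are exactly what is needed; the remaining verifications are routine.
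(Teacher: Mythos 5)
Parts (a) and (b) of your proposal are correct and coincide with the paper's own argument: (a) is \eqref{DBLAG} combined with \eqref{EQFDJ} (your expanded justification of why $(\partial_f \Lag)(f_B,B,g_B)\big[f_B'[H]\big]$ vanishes is exactly the computation behind \eqref{EQFDJ}), and (b) is Lemma \ref{FDJ-1} applied with $H:=B-\B$. Your derivation of \eqref{FRJ} from the equality case of (b), by testing with $\pm tH$ and using density of smooth compactly supported fields, is also fine.

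The genuine gap is in the regularity bootstrap of part (c), precisely the step you single out as the crux. You obtain H\"older continuity in $x$ of the source $S(t,x)=\int v\times\delv f_\B\, g_\B\dv$ from the embedding $W^{2,\beta}\hookrightarrow C^{1,\gamma}$ applied to $f_\B\in L^\infty(0,T;W^{2,\beta}(\RR^6))$. This embedding is false in the relevant range: $f_\B$ lives on $\RR^6$, and Morrey's theorem gives $W^{2,\beta}(\RR^6)\hookrightarrow C^{1,\gamma}(\RR^6)$ only for $\beta>6$, whereas the paper assumes only $\beta>3$; the embedding of Lemma \ref{LAC} concerns fields on $\RR^3$, not distribution functions on $\RR^6$. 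For instance, for $\beta=4$ one only gets $\delv f_\B(t)\in W^{1,4}(\RR^6)$, which is not even contained in $L^\infty(\RR^6)$, let alone $C^{0,\gamma}$. The paper avoids this entirely by an integration by parts in $v$: moving the $v$-derivative off $f_\B$ onto $g_\B$ shows that $S$ is continuously differentiable in $x$ with $\delxi S=\pm\int \big( (v\times\delv f_\B)\,\delxi g_\B-(v\times\delv g_\B)\,\delxi f_\B \big)\dv$, which uses only $f_\B,g_\B\in C([0,T];C^1_b)$ (available from Theorems \ref{GWS} and \ref{ADJS}); a compactly supported source that is $C^1$ (in particular Lipschitz, hence H\"older) in $x$, continuously in $t$, has a Newtonian potential in $C([0,T];C^2_b)$, which closes the argument. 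The same trick would also repair your Schauder step. A second, smaller imprecision: you invoke $\B(t)\in H^1(\RR^3)$ to identify $\B(t)$ with the decaying solution, but $H^1$ gives no pointwise decay; either argue that a harmonic function which is the sum of an $L^2(\RR^3)$ function and a decaying function vanishes, or, as the paper does, use $\B(t)\in W^{2,\beta}(\RR^3)\hookrightarrow C^{1,\gamma}(\RR^3)$ (valid on $\RR^3$ since $\beta>3$) together with the decay at infinity of $W^{2,\beta}(\RR^3)$-functions.
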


\begin{proof} (a) follows immediately from \eqref{DBLAG} and \eqref{EQFDJ}. (b) is a direct consequence of Lemma \ref{FDJ-1} and (a) with ${H:=B-\bar B}$ and (b) implies \eqref{FRJ}. Recall that for almost all $t\in [0,T]$, $\B(t)$ has a continuous representative satisfying $\B_i(t,x) \to 0$ if $|x|\to\infty$ for every $i\in\{1,2,3\}$. Hence $\B$ is uniquely determined by \eqref{FRJ-2}.
	We must still prove that $\B$ lies in $C([0,T];C^2_b(\RR^3))$. Recall that $f_\B$ and $g_\B$ are in $ W^{1,2}(0,T;C_b(\RR^6))$ $\cap\, C([0,T];C_b^1(\RR^6))$ as $\B\in\BB$. Thus
	\begin{align*}
	p:[0,T]\times\RR^3 \to\RR^3,\quad (t,x)\mapsto \int\limits v\times \delv f_\B \; g_\B \dv
	\end{align*}
	is continuous with $\supp p(t)\subset \BR$ for all $t\in[0,T]$. By approximating $f_\B$ by $C([0,T];C^2_b)$-functions and using integration by parts one can easily show that $p$ is continuously differentiable whereby the partial derivatives are given by
	\begin{align*}
	\delxi p =  - \int\limits (v\times \delv f_\B) \; \delxi g_\B - (v\times \delv g_\B) \; \delxi f_\B\dv, \quad i=1,2,3.
	\end{align*}
	Consequently $\B\in C([0,T];C^2_b(\RR^3;\RR^3))$. Since $g_\B$ does not depend on $\chi$ as long as $\chi=1$ on $\BRZ$ the same holds for $\B$. \end{proof}

Note that Theorem \ref{NOC1} provides only a necessary but not a sufficient condition for local optimality. If a control $B$ satisfies the above condition it could still be a saddle point or even a local maximum point. Theorem \ref{NOC1} does also not provide uniqueness of the locally optimal solution. However the globally optimal solution that is predicted by Theorem \ref{EXOC1} is also locally optimal. Thus we have at least one control to satisfy the necessary optimality condition of Theorem \ref{NOC1}. \pskip

Assuming that there exists a locally optimal solution $\B\in\IBB$ we can easily deduce from Theorem \ref{NOC1} that the triple $(f_\B,g_\B,\B)$ is a classical solution of some certain system of equations.

\begin{corollary}
	\label{OSY1}
	Suppose that $\B\in\IBB$ is a locally optimal solution of the optimization problem \eqref{OP1}. Let $f_\B$ and $g_\B$ be its induced state and costate. Then $f_\B, g_\B\in C^1([0,T]\times\RR^6)$  and the triple
	$(f_\B,g_\B,\B)$ is a classical solution of the \textbf{optimality system}
	\begin{align}
	\label{OS-SEQ0}
	\begin{cases}
	\delt f + v\scdot \delx f - \delx\psi_f\scdot\delv f + (v\stimes B)\scdot\delv f = 0, &\hspace{-50pt} f\big\vert_{t=0} = \mathring f\\[0.15cm]
	\delt g + v\scdot \delx g - \delx\psi_f\scdot\delv g + (v\stimes B)\scdot\delv g = \PHI_{f,g}\chi, &\hspace{-50pt} g\big\vert_{t=T} = f(T)-f_d\\[0.15cm]
	B(t,x) = -\frac 1 {4\pi\lambda} \iint \frac{1}{|x-y|}\; w\times \delv f(t,y,w)\; g(t,y,w) \dyw\;.
	\end{cases}
	\end{align}
	For all $t\in [0,T]$, $\supp f_\B(t) \subset \BR$ and $\supp g_{\B}(t) \subset B_{R^*}(0)$.
\end{corollary}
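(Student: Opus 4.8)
Corollary \ref{OSY1} is essentially a bookkeeping exercise that assembles the regularity and structural results already proved into a single statement, so my plan is to verify the three equations of the optimality system \eqref{OS-SEQ0} together with the claimed regularity and support properties. The hard part is not any single derivation but rather organizing the regularity bootstrap: I must show that $f_{\B}$ and $g_{\B}$, which Theorem \ref{GWS} and Theorem \ref{ADJS} only guarantee to lie in $W^{1,2}(0,T;C_b)\cap C([0,T];C^1_b)\cap L^\infty(0,T;W^{2,\beta})$, are in fact \emph{classical} $C^1([0,T]\times\RR^6)$ solutions once we know $\B$ is smooth enough.

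The plan is as follows. First I would observe that, by Theorem \ref{NOC1}(c), the assumption $\B\in\IBB$ forces $\B$ to satisfy the representation formula \eqref{FRJ-2}, which is precisely the third equation of \eqref{OS-SEQ0}; moreover that theorem already established $\B\in C([0,T];C^2_b(\RR^3;\RR^3))$. The point is that this extra regularity of the field $\B$ now feeds back into the state and costate equations. Since $\B\in C([0,T];C^2_b)\subset C([0,T];C^1_b)$, Theorem \ref{GCS} (applied with the field $\B$, which by Lemma \ref{LAC}(c) indeed satisfies $\B\in C([0,T];C^1_b)$ after identification with its continuous representative) upgrades the strong solution $f_{\B}$ to a genuine classical solution $f_{\B}\in C^1([0,T]\times\RR^6)$. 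In fact, because $\B\in C([0,T];C^2_b)$, the higher-regularity part of Theorem \ref{GCS} even yields $f_{\B}\in C([0,T];C^2_b(\RR^6))$, and by uniqueness (Proposition \ref{UNIQS}) this classical solution coincides with the strong solution used to define $f_{\B}$. This verifies the first equation of \eqref{OS-SEQ0} in the classical sense, with the initial condition holding everywhere by Theorem \ref{GWS}(b).

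Next I would treat the costate. With $f_{\B}\in C([0,T];C^2_b)$ now in hand, the coefficients of the costate equation \eqref{COSTEQ} satisfy the stronger hypotheses required for classical solvability: the term $\delx\psi_{f_{\B}}$ lies in $C([0,T];C^2_b)$ by Step 2 of the proof of Theorem \ref{GCS} (or Lemma \ref{NPOT}), the field $\B$ is in $C([0,T];C^{1,\gamma})$, and the decomposition $g_{\B}=f_{\B}-h_{\B}$ from Step 1 of the proof of Theorem \ref{ADJS} reduces matters to $h_{\B}$, which solves a system of type \eqref{LVL} with final datum $f_d\in C^2_c(\RR^6)$. Invoking Corollary \ref{WSLVL}(b) together with Proposition \ref{CSLVL} (in its final-value form \eqref{EXPLVL2}), $h_{\B}$ is a classical $C^1([0,T]\times\RR^6)$ solution; hence $g_{\B}=f_{\B}-h_{\B}\in C^1([0,T]\times\RR^6)$ as claimed, and it satisfies the second equation of \eqref{OS-SEQ0} classically with $g_{\B}\vert_{t=T}=f_{\B}(T)-f_d$.

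Finally I would collect the support statements. The bound $\supp f_{\B}(t)\subset\BR$ is exactly the conclusion of Theorem \ref{GCS} and Definition \ref{WKS}(iv), while $\supp g_{\B}(t)\subset B_{R^*}(0)$ is furnished directly by Theorem \ref{ADJS}. Since all three equations now hold pointwise with continuously differentiable $f_{\B}$ and $g_{\B}$, the triple $(f_{\B},g_{\B},\B)$ is a classical solution of the optimality system, completing the proof. I expect the only genuinely delicate point to be the claim that $\B$ is regular enough to trigger the classical existence theorems; this is resolved by \eqref{FRJ-2}, which exhibits $\B$ as a Newtonian-type potential of a $C_b^1$ density and thus places it in $C([0,T];C^2_b)$, more than sufficient for the bootstrap above.
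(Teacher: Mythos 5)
Your proposal is correct and takes essentially the same route as the paper: Theorem \ref{NOC1}(c) supplies $\B\in C([0,T];C^2_b)$ together with the representation formula, Theorem \ref{GCS} then upgrades $f_{\B}$ to a classical solution, and the decomposition of $g_{\B}$ through $h_{\B}$ from Theorem \ref{ADJS} combined with Proposition \ref{CSLVL} makes $g_{\B}$ classical, exactly as in the paper's own proof. The only blemish is the parenthetical appeal to Lemma \ref{LAC}(c), which yields merely $L^2$-in-time regularity and cannot by itself give $\B\in C([0,T];C^1_b)$; this is harmless, since the needed continuity already follows from Theorem \ref{NOC1}(c), which you also cite.
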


\begin{proof}
	From Theorem \ref{NOC1} we know that $\B\in C([0,T];C^2_b)$. Thus by Theorem \ref{GCS} the solution $f_\B$ is classical, lies in $C^1([0,T]\times\RR^6)\cap C([0,T];C^2_b)$ and satisfies $\supp f_\B(t) \subset \BR$,\linebreak $t\in[0,T]$. We can use the decomposition $g_\B=f_\B+h_\B$ from the proof of Theorem \ref{ADJS} and from Proposition \ref{CSLVL} we can easily conclude that $g_\B$ is classical, i.e., $g_\B\in C^1([0,T]\times\RR^6)$ with $\supp g_\B(t) \subset B_{R^*}(0)$, $t\in[0,T]$. The rest is obvious due to the construction of $f_\B$, $g_\B$ and Theorem~\ref{NOC1}.
\end{proof}

\subsection{A sufficient condition for local optimality}


To prove that our cost functional is twice continuously Fréchet differentiable we will need Fréchet differentiability of first order of the costate.

\begin{lemma}
	\label{FDCCSO}
	\hypertarget{HFDCCSO}
	Let $g.:\BB\to C([0,T];L^2(\RR^6)),\; B \mapsto g_B$ denote the field-costate operator. For any $B\in\BB$ and $H\in\VV$ there exists a unique strong solution $g_B^H\in {H^1(]0,T[\times\RR^6)}$ of the final value problem
	\begin{displaymath}
	\refstepcounter{equation}
	\label{GDEQ}
	\left\{
	\begin{aligned}
	&\delt g + v\scdot\delx g - \delx\psi_{f'_B[H]}\scdot\delv g_B - \delx\psi_{f_B}\scdot\delv g + (v\stimes B)\scdot\delv g + (v\stimes H)\cdot\delv g_B \\
	&\qquad = \PHI_{f_B,g}\chi - \PHI_{g_B,f_B'[H]}\chi \\[0.2cm]
	&g\big\vert_{t=T}=0\;. 
	\end{aligned} 
	\right.\hspace{-15pt} \textnormal{(\theequation)}
	\end{displaymath}
	Then the following holds:
	\begin{itemize}
		\item [\textnormal{(a)}] The control-costate operator $g.$ is Fr\'echet differentiable on $\IBB$ with respect to the $C([0,T];L^2(\RR^6))$-norm, i.e., for any $B\in\IBB$ there exists a unique linear operator $g'_B: \VV\to C([0,T];L^2(\RR^6))$ such that\\
		$$\forall \eps>0\; \exists \delta > 0\; \forall H\in \VV \text{ with } \|H\|_{\VV} < \delta :$$
		$$B+H \in \IBB \tand \frac{\| g_{B+H} - g_B - g'_B[H] \|_{C([0,T];L^2)}}{\|H\|_{\VV}} < \eps.$$
		The Fr\'echet derivative is given by $g'_B[H]=g_B^H$ for all $H\in\VV$.
		\item [\textnormal{(b)}] For all $B,H\in\IBB$, the solution $g_B^H$ depends Hölder-continuously on $B$ in such a way that there exists some constant $C>0$ depending only on $\mathring f, T, K$ and $\beta$ such that for all $A,B\in\IBB$,
		\begin{align}
		\label{CFG}
		\underset{\|H\|_\VV\le 1}{\sup}\;\|g_A'[H] - g_B'[H]\|_{L^2(0,T;L^2)} \le C \;\|A-B\|_{\VV}^{\gamma}.
		\end{align}
	\end{itemize}
\end{lemma}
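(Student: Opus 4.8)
\begin{sketch-proof}
The plan is to follow the proof of Theorem~\ref{FDCSO} step by step, with the costate playing the role of the state. First I would read \eqref{GDEQ} as an inhomogeneous linear Vlasov equation of the type \eqref{LVL} in its final-value form: the unknown is $g$, the transport coefficient is $\MA=-\delx\psi_{f_B}$, the magnetic field is $\MB=B$, the cut-off is $\Mchi=\chi$ and $\Ma=f_B$, while $\MC=0$ and the inhomogeneity $\Mb:=\delx\psi_{f'_B[H]}\cdot\delv g_B-(v\times H)\cdot\delv g_B-\PHI_{g_B,f'_B[H]}\chi$ gathers exactly the terms not containing the unknown. Using Theorem~\ref{GWS} for $f_B$, Theorem~\ref{ADJS} for $g_B$ and Theorem~\ref{FDCSO} for $f'_B[H]=f_B^H$, one checks that these coefficients meet the hypotheses of the final-value analogue of Corollary~\ref{WSLVL}(a); in particular $\Mb\in L^2(0,T;C_b\cap H^1)$ with support in a fixed ball. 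Hence \eqref{GDEQ} has a unique strong solution $g_B^H\in L^\infty\cap H^1(]0,T[\times\RR^6)$.

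For part (a) I would study the difference of the two costate equations for $B+H$ and $B$, with $B\in\IBB$ and $\|H\|_\VV$ sufficiently small. Writing $\delta f:=f_{B+H}-f_B$ and $\delta g:=g_{B+H}-g_B$ and expanding the products $\delx\psi_{f_{B+H}}\cdot\delv g_{B+H}$, $(v\times(B+H))\cdot\delv g_{B+H}$ and the bilinear term $\PHI_{f_{B+H},g_{B+H}}\chi$ about $(f_B,g_B,B)$, the genuinely linear contributions reproduce the operator and source of \eqref{GDEQ}, provided one exploits the integration-by-parts identity $\PHI_{\Ma,f}=-\PHI_{f,\Ma}$. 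The latter follows from $\sum_i\delxi\psi_{\delvi\Ma\,f}=-\sum_i\delxi\psi_{\Ma\,\delvi f}$ for compactly supported arguments and turns the linear term $\PHI_{f_B^H,g_B}\chi$ produced by the expansion into $-\PHI_{g_B,f_B^H}\chi$, matching \eqref{GDEQ} exactly. Consequently $g_{B+H}-g_B-g_B^H$ solves the linear costate equation with zero final datum and right-hand side equal to the collected remainder $\mathcal R=\delx\psi_{\delta f-f_B^H}\cdot\delv g_B+\delx\psi_{\delta f}\cdot\delv\delta g-(v\times H)\cdot\delv\delta g+\big(\PHI_{\delta f-f_B^H,g_B}+\PHI_{\delta f,\delta g}\big)\chi$.

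The crucial estimate is $\|\mathcal R\|_{L^2(0,T;L^2)}\le C\|H\|_\VV^{1+\gamma}$. Each quadratic term, namely $\delx\psi_{\delta f}\cdot\delv\delta g$, $(v\times H)\cdot\delv\delta g$ and $\PHI_{\delta f,\delta g}\chi$, is $O(\|H\|_\VV^{1+\gamma})$ by combining the Lipschitz bound on $\delta f$ from Corollary~\ref{WLIP}, the Hölder bound $\|\delv\delta g\|_{C([0,T];C_b)}\le C\|H\|_\VV^{\gamma}$ from Theorem~\ref{ADJS}, Lemma~\ref{NPOT} and \eqref{ESTPHI1}. The two linearisation-error terms $\delx\psi_{\delta f-f_B^H}\cdot\delv g_B$ and $\PHI_{\delta f-f_B^H,g_B}\chi=-\PHI_{g_B,\delta f-f_B^H}\chi$ are likewise $O(\|H\|_\VV^{1+\gamma})$, because by \eqref{ESTPHI1} and Lemma~\ref{NPOT} they are controlled by $\|\delta f-f_B^H\|_{L^2}$, and the proof of Theorem~\ref{FDCSO} yields $\|\delta f-f_B^H\|_{C([0,T];L^2)}\le C\|H\|_\VV^{1+\gamma}$. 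A quadratic Gronwall argument exactly as for $f_{\mathcal R}$ in the proof of Theorem~\ref{FDCSO} then gives $\|g_{B+H}-g_B-g_B^H\|_{C([0,T];L^2)}\le C\|\mathcal R\|_{L^2(0,T;L^2)}\le C\|H\|_\VV^{1+\gamma}$, and dividing by $\|H\|_\VV$ establishes Fréchet differentiability with $g'_B[H]=g_B^H$.

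For part (b) I would follow Step~2 of Theorem~\ref{ADJS} together with the proof of Theorem~\ref{FDCSO}(b): approximate $A,B,H$ by smooth compactly supported sequences $(A_k),(B_k),(H_k)\subset C([0,T];C^{1,\gamma})$, so that $g_{A_k}^{H_k}$ and $g_{B_k}^{H_k}$ are classical solutions given by the final-value representation formula \eqref{EXPLVL2}, and extract weak $H^1$- and strong $L^2$-limits (Rellich-Kondrachov) $g_A^H,g_B^H$. Subtracting the two representation formulas and inserting the field-dependence estimates for the characteristic flow (Lemma~\ref{LIP}), for $f_B$ (Corollary~\ref{WLIP}), for $g_B$ (Theorem~\ref{ADJS}) and for $f_B^H$ (Theorem~\ref{FDCSO}(b)), Gronwall's lemma yields $\|g_{A_k}^{H_k}-g_{B_k}^{H_k}\|_{L^2(0,T;L^2)}\le C\|A_k-B_k\|_\VV^{\gamma}$, and passing to the limit gives \eqref{CFG}. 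I expect the main obstacle throughout to be the bookkeeping of the bilinear $\PHI$-term: every error term must be arranged, via $\PHI_{\Ma,f}=-\PHI_{f,\Ma}$, so that it involves only $\|\delta f-f_B^H\|_{L^2}$ and first-order differences of $g_B$, since the differentiability of $f.$ is available solely in the $C([0,T];L^2)$-topology and no $v$-derivative of $\delta f-f_B^H$ may be used.
\end{sketch-proof}
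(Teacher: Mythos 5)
Your overall strategy is exactly the one the paper intends: the paper's entire proof of this lemma is the single sentence that it ``proceeds analogously to the proof of Theorem~\ref{FDCSO}'', and your reduction of \eqref{GDEQ} to the final-value version of Corollary~\ref{WSLVL}(a), your collected remainder $\mathcal R$ (which is precisely the right list of quadratic and linearization-error terms), the antisymmetry identity $\PHI_{\Ma,f}=-\PHI_{f,\Ma}$ used to match the source term, and your approximation argument for part (b) all carry out that analogy faithfully.

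There is, however, a genuine gap in part (a), located exactly at the final datum. Unlike the state (whose initial datum $\mathring f$ is independent of $B$), the costate has final value $g_B(T)=f_B(T)-f_d$, which \emph{does} depend on $B$; hence $(g_{B+H}-g_B)(T)=(f_{B+H}-f_B)(T)$, which by Corollary~\ref{WLIP} is of order $\|H\|_\VV$ and certainly not $o(\|H\|_\VV)$. Since \eqref{GDEQ} prescribes $g_B^H(T)=0$, the error $e:=g_{B+H}-g_B-g_B^H$ satisfies $e(T)=(f_{B+H}-f_B)(T)\neq 0$, so your assertion that $e$ solves the linear costate equation ``with zero final datum'' is false. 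The backward Gronwall argument then only gives $\|e\|_{C([0,T];L^2)}\le C\big(\|e(T)\|_{L^2}+\|\mathcal R\|_{L^2(0,T;L^2)}\big)=O(\|H\|_\VV)$, which does not prove Fr\'echet differentiability. In fact, with the zero final condition taken at face value the claimed identity $g_B'[H]=g_B^H$ cannot hold: differentiating the final condition, or using the decomposition $g_B=f_B-h_B$ from the proof of Theorem~\ref{ADJS} (where $h_B$ carries the $B$-independent datum $f_d$, so $h_B'[H]\vert_{t=T}=0$), shows that the derivative of the costate map must satisfy $g_B'[H]\vert_{t=T}=f_B'[H](T)=f_B^H(T)$; this is also what consistency with Corollary~\ref{TFDJ} requires. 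The printed ``$g\vert_{t=T}=0$'' in \eqref{GDEQ} is an inconsistency in the statement that you should have flagged rather than papered over. With the corrected datum your argument closes as intended: $e(T)=(f_{B+H}-f_B-f_B'[H])(T)$ is $O(\|H\|_\VV^{1+\gamma})$ by the proof of Theorem~\ref{FDCSO}, and including $\|e(T)\|_{L^2}$ in the Gronwall estimate yields $\|e\|_{C([0,T];L^2)}\le C\|H\|_\VV^{1+\gamma}$, hence differentiability; the analogous final-datum difference must also be inserted into your Gronwall estimate in part (b).
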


The proof proceeds analogously to the proof of Theorem \ref{FDCSO}.

\begin{remark}
	As $K$ was arbitrary the above results hold true if $\IBB$ is replaced by~$\IBBB$. Hence they are especially true on $\BB$.
\end{remark}

Continuous differentiability of the cost functional then follows:

\begin{corollary}
	\label{TFDJ}
	\hypertarget{HTFDJ}
	The cost functional $J$ of the optimization problem \eqref{OP1} is twice Fréchet differentiable on $\IBB$. The Fréchet derivative of second order at the point $B\in\IBB$ can be described as a bilinear operator $J''(B):\VV^2\to \RR$ that is given by
	\begin{align*}
	J''(B)[H_1,H_2] &= \lambda\; \langle D_x H_1, D_x H_2 \rangle_{L^2([0,T]\times\RR^3)}\\
	&\quad - \int\limits_{[0,T]\times\RR^6} (v\times H_1)\cdot \big( \delv f_B\; g_B'[H_2] - \delv g_B\; f_B'[H_2] \big) \; \mathrm d(t,x,v)
	\end{align*}
	for all $H_1,H_2\in\IBB$. Moreover there exists some constant $C>0$ depending only on $\mathring f$, $f_d$, $T$, $K$ and $\beta$ such that for all $B,\tB\in\IBB$,
	\begin{align*}
	\| J''(B) - J''(\tB) \| \le C\, \|B-\tB\|_{\VV}^\gamma
	\end{align*}
	where
	\begin{align*}
	\| J''(B) \| = \sup\Big\{ \big|J''(B)[H_1,H_2] \big| \,\Big\vert\, \|H_1\|_{\VV} = 1 ,\, \|H_2\|_{\VV} = 1\Big\}
	\end{align*}
	denotes the operator norm. This means that $J$ is twice continuously differentiable.
\end{corollary}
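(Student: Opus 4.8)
The plan is to obtain $J''$ by differentiating the first-order derivative
\[
J'(B)[H_1] = \lambda\langle D_x B, D_x H_1\rangle_{L^2} - \int_{[0,T]\times\RR^6}(v\times H_1)\cdot\delv f_B\; g_B\dtxv
\]
(from \eqref{DBLAG} and \eqref{EQFDJ}) once more with respect to $B$ in a direction $H_2\in\VV$, and reading off $J''(B)[H_1,H_2]$ as the resulting bilinear form. The regularization part $B\mapsto\lambda\langle D_x B, D_x H_1\rangle_{L^2}$ is affine linear, so its increment in direction $H_2$ is exactly $\lambda\langle D_x H_1, D_x H_2\rangle_{L^2}$, reproducing the first summand of the claimed formula with no error. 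All remaining work is in the bilinear integral term.

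For the integral I would form $J'(B+H_2)[H_1]-J'(B)[H_1]$ and split the integrand as
\[
\delv f_{B+H_2}\,g_{B+H_2}-\delv f_B\,g_B = \delv f_{B+H_2}\,(g_{B+H_2}-g_B) + (\delv f_{B+H_2}-\delv f_B)\,g_B.
\]
In the first summand I replace $g_{B+H_2}-g_B$ by $g_B'[H_2]$ (Fr\'echet differentiability of the field-costate operator, Lemma~\ref{FDCCSO}) and $\delv f_{B+H_2}$ by $\delv f_B$ (H\"older continuity of $\delz f$ in $C([0,T];C_b)$, Corollary~\ref{WLIP}); the two replacements cost $o(\|H_2\|_\VV)$ and $O(\|H_2\|_\VV^{1+\gamma})$, using the uniform bounds $\|g_B'[H_2]\|_{C([0,T];L^2)}\le C\|H_2\|_\VV$ and $\|\delv f_{B+H_2}\|_{C([0,T];C_b)}\le C$.

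The crucial maneuver concerns the second summand $-\int(v\times H_1)\cdot\delv(f_{B+H_2}-f_B)\,g_B$: here the factor that is varied in $B$ carries the velocity derivative, which I cannot control, since the field-state operator is only Fr\'echet differentiable in the $C([0,T];L^2)$-sense, not in a $C^1$-sense. I would resolve this by an integration by parts in $v$, exploiting $\delv\cdot(v\times H_1)=0$ (valid because $H_1=H_1(t,x)$ is independent of $v$) together with the compact support of all functions, to shift the velocity derivative onto $g_B$:
\[
-\int(v\times H_1)\cdot\delv(f_{B+H_2}-f_B)\,g_B\dtxv = \int (v\times H_1)\cdot\delv g_B\,(f_{B+H_2}-f_B)\dtxv.
\]
Now $f_{B+H_2}-f_B$ appears undifferentiated and may be replaced by $f_B'[H_2]$ (Theorem~\ref{FDCSO}) at the cost of $o(\|H_2\|_\VV)$, while $\delv g_B\in C([0,T];C_b)$ is bounded. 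Collecting the three contributions yields exactly
\[
J''(B)[H_1,H_2] = \lambda\langle D_x H_1, D_x H_2\rangle_{L^2} - \int_{[0,T]\times\RR^6}(v\times H_1)\cdot\big(\delv f_B\,g_B'[H_2]-\delv g_B\,f_B'[H_2]\big)\dtxv,
\]
with all error terms $o(\|H_2\|_\VV)$ uniformly over $\|H_1\|_\VV=1$, which is precisely the operator-norm statement needed for Fr\'echet differentiability of $J'$; bilinearity and boundedness of $J''(B)$ follow from this formula by Cauchy--Schwarz.

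Finally, for the H\"older continuity I would estimate $|J''(B)[H_1,H_2]-J''(\tB)[H_1,H_2]|$ for $\|H_1\|_\VV=\|H_2\|_\VV=1$. The $\lambda$-term is independent of $B$ and cancels, so only the bilinear integral survives; inserting and subtracting mixed terms, I would bound its pieces by combining boundedness of $\delv f_{\tB},\delv g_{\tB},f_B'[H_2],g_B'[H_2]$ with the four H\"older estimates $\|\delv f_B-\delv f_{\tB}\|_{C([0,T];C_b)}$ and $\|\delv g_B-\delv g_{\tB}\|_{C([0,T];C_b)}$ of order $\|B-\tB\|_\VV^\gamma$ (Corollary~\ref{WLIP} and Theorem~\ref{ADJS}), together with $\|f_B'[H_2]-f_{\tB}'[H_2]\|_{L^2}$, $\|g_B'[H_2]-g_{\tB}'[H_2]\|_{L^2}$ of order $\|B-\tB\|_\VV^\gamma$ from \eqref{CFF} and \eqref{CFG}. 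Taking the supremum over unit-norm $H_1,H_2$ gives $\|J''(B)-J''(\tB)\|\le C\|B-\tB\|_\VV^\gamma$. I expect the integration by parts in $v$ to be the conceptual crux: it is exactly what circumvents the absence of $C^1$-control on $f_B'[H]$, since the naive product rule would require differentiating $\delv f_B$ in $B$. The rest is bookkeeping—keeping every estimate uniform over the unit ball of $H_1$ (and $H_2$) so that all limits are taken in operator norm rather than pointwise.
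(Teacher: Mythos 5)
Your proposal is correct and follows essentially the same route as the paper: the paper's (very terse) proof remark says precisely to use the decompositions $f_{B+H}-f_B = f_B'[H]+f_R[H]$ and $g_{B+H}-g_B = g_B'[H]+g_R[H]$ from Theorem \ref{FDCSO} and Lemma \ref{FDCCSO} with $C([0,T];L^2)$-remainders of order $\text{o}(\|H\|_{\VV})$, which is exactly the skeleton of your argument. The integration by parts in $v$ (using $\delv\cdot(v\times H_1)=0$ and the compact supports) to move the velocity derivative off the $f$-increment is the step the paper leaves implicit --- it is forced by the form of the stated formula, since only $C([0,T];L^2)$-differentiability of $B\mapsto f_B$ is available --- and you identify and justify it correctly, including the uniformity of all error bounds over the unit ball of $H_1$ and the H\"older estimate via \eqref{CFF}, \eqref{CFG}, Corollary \ref{WLIP} and Theorem \ref{ADJS}.
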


\noindent This result can be proved using the decompositions $f_{B+H}-f_B = f'_B[H] + f_R[H]$ and $g_{B+H}-g_B = g'_B[H] + g_R[H]$ from Theorem \ref{FDCSO} and Theorem~\ref{FDCCSO} where
\begin{align*}
	\|f_R[H]\|_{C([0,T];L^2)} = \text{o}(\|H\|_{\VV}),\qquad  \|g_R[H]\|_{C([0,T];L^2)} = \text{o}(\|H\|_{\VV}).
\end{align*}

\pagebreak[1]
The following theorem provides a sufficient condition for local optimality:
\begin{theorem}
	\label{QGC1}
	\hypertarget{HQGC1}
	Suppose that $\B \in\BB$ and let $f_\B$ and $g_\B$ be its induced state and costate. Let $0<\alpha<2+\gamma$ be any real number. We assume that the variation inequality
	\begin{align}
	\label{VIQ}
	\int\limits_{[0,T]\times\RR^3} \hspace{-0.2cm}\left( -\lambda \laplace_x \B + \int\limits_{\RR^3} v\times \delv f_\B \; g_\B \dv \right)\cdot (B-\B) \; \mathrm d(t,x) = J'(\B)[B-\B] \ge 0
	\end{align}
	holds for all ${B\in\BB}$ and that there exists some constant $\eps>0$ such that 
	\begin{displaymath}
	\refstepcounter{equation}
	\begin{aligned}
	\label{CIQ}
	&\lambda\; \|D_x H\|_{L^2([0,T]\times\RR^3)}^2
	- \hspace{-0.2cm}\int\limits_{[0,T]\times\RR^6} (v\times H)\cdot \big( \delv f_\B\; g_\B'[H] - \delv g_\B\; f_\B'[H] \big) \; \mathrm d(t,x,v)\\[0.3cm]
	&\quad = \; J''(\B)[H,H] \;  \ge \eps \; \|H\|_{\VV}^\alpha \hspace{189pt} \textnormal{(\theequation)}
	\end{aligned}
	\end{displaymath}
	holds for all $H\in\BB$. Then $J$ satisfies the following growth condition: There exists $\delta>0$ such that for all $B\in\BB$ with $\|B-\B\|_{\VV}<\delta$,
	\begin{align}
	\label{QGC}
	J(B) \ge J(\B) + \frac \eps 4 \|B-\B\|_{\VV}^\alpha
	\end{align}
	and hence $\B$ is a strict local minimizer of $J$ on the set $\BB$.
\end{theorem}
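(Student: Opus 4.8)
The plan is to run a second-order Taylor expansion of $J$ around $\B$ and then play the coercivity hypothesis \eqref{CIQ} against the Hölder modulus of $J''$ supplied by Corollary \ref{TFDJ}. Fix $B\in\BB$ with $\|B-\B\|_\VV<\delta$, where $\delta\le K$ is to be pinned down at the end, and set $H:=B-\B$. Since $\BB$ is convex, the whole segment $\{\B+sH : s\in[0,1]\}$ lies in $\BB\subset\IBBB$, and by Corollary \ref{TFDJ} (which, as $K>0$ was arbitrary, is valid on $\IBBB$) the functional $J$ is twice continuously Fréchet differentiable there. Hence $\phi(s):=J(\B+sH)$ defines a real-valued $C^2$ function on $[0,1]$ with $\phi'(s)=J'(\B+sH)[H]$ and $\phi''(s)=J''(\B+sH)[H,H]$, and the Lagrange form of Taylor's theorem for real functions of one variable furnishes some $\theta\in(0,1)$ with
\begin{align*}
J(B)=J(\B)+J'(\B)[H]+\tfrac12\,J''(\B+\theta H)[H,H].
\end{align*}

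First I would discard the first-order term: since $B\in\BB$, the variation inequality \eqref{VIQ} gives $J'(\B)[H]=J'(\B)[B-\B]\ge 0$. For the second-order term I split
\begin{align*}
\tfrac12\,J''(\B+\theta H)[H,H]=\tfrac12\,J''(\B)[H,H]+\tfrac12\big(J''(\B+\theta H)-J''(\B)\big)[H,H].
\end{align*}
Provided $\delta\le K$ we have $\|H\|_\VV<\delta\le K$, so $H\in\BB$ and the coercivity hypothesis \eqref{CIQ} bounds the first piece from below by $\tfrac\eps2\,\|H\|_\VV^\alpha$. For the second piece I would invoke the Hölder continuity of $J''$ from Corollary \ref{TFDJ}, namely $\|J''(\B+\theta H)-J''(\B)\|\le C\,\|\theta H\|_\VV^\gamma\le C\,\|H\|_\VV^\gamma$, together with the definition of the operator norm, to get
\begin{align*}
\big|\big(J''(\B+\theta H)-J''(\B)\big)[H,H]\big|\le C\,\|H\|_\VV^\gamma\,\|H\|_\VV^2=C\,\|H\|_\VV^{2+\gamma}.
\end{align*}

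Collecting the estimates yields $J(B)\ge J(\B)+\tfrac\eps2\,\|H\|_\VV^\alpha-\tfrac C2\,\|H\|_\VV^{2+\gamma}$. The decisive point is now the strict inequality $\alpha<2+\gamma$: writing $\|H\|_\VV^{2+\gamma}=\|H\|_\VV^{2+\gamma-\alpha}\,\|H\|_\VV^{\alpha}$ with positive exponent $2+\gamma-\alpha$, I choose $\delta>0$ so small that both $\delta\le K$ and $\tfrac C2\,\delta^{2+\gamma-\alpha}\le\tfrac\eps4$. Then for every $B\in\BB$ with $\|B-\B\|_\VV<\delta$ the remainder is absorbed, $\tfrac C2\|H\|_\VV^{2+\gamma}\le\tfrac\eps4\|H\|_\VV^\alpha$, and we arrive at the growth estimate \eqref{QGC}, that is $J(B)\ge J(\B)+\tfrac\eps4\,\|B-\B\|_\VV^\alpha$. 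Strict local optimality of $\B$ follows immediately, since the right-hand side strictly exceeds $J(\B)$ whenever $B\ne\B$. The only genuine obstacle is the bookkeeping ensuring that $J$ is $C^2$ and $J''$ Hölder on a convex neighbourhood containing the segment, so that Taylor's theorem and \eqref{CIQ} may both be applied with $H$ lying in $\BB$; this is precisely what the extension of Corollary \ref{TFDJ} to $\IBBB$ guarantees, while the margin $\alpha<2+\gamma$ does the rest.
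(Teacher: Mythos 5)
Your proposal is correct and follows essentially the same route as the paper's own proof: a second-order Taylor expansion of $s\mapsto J(\B+s(B-\B))$ on $[0,1]$, discarding the first-order term via \eqref{VIQ}, bounding $\tfrac12 J''(\B)[H,H]$ below by \eqref{CIQ}, and absorbing the remainder $\tfrac12\big(J''(\B+\theta H)-J''(\B)\big)[H,H]$ using the H\"older estimate of Corollary \ref{TFDJ} together with $\alpha<2+\gamma$ and a small enough $\delta$. Your only additions are housekeeping the paper leaves implicit (requiring $\delta\le K$ so that $H=B-\B\in\BB$ when invoking \eqref{CIQ}, and appealing to the extension of Corollary \ref{TFDJ} to $\IBBB$ for twice differentiability along the segment), which slightly sharpen but do not alter the argument.
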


\begin{proof} Let $B\in\BB$ be arbitrary. We define the auxillary function \linebreak $F:[0,1]\to\RR_0^+$, ${s \mapsto J\big(\B+s(B-\B)\big)}$. Then $F$ is twice continuously differentiable by chain rule and Taylor expansion yields $F(1) = F(0) + F'(0) + \tfrac 1 2 F''(\vartheta)$ for some $\vartheta\in ]0,1[$. By the definition of $F$ this implies that
	\begin{align*}
	J\big(B\big) 
	&\ge  J\big(\B\big) +  \tfrac 1 2 J''\big(\B\big)[B-\B,B-\B] \\
	&\quad +  \tfrac 1 2 \Big( J''\big(\B+\vartheta (B-\B)\big) - J''\big(\B\big) \Big)[B-\B,B-\B]
	\end{align*}
	Now, according to Corollary \ref{TFDJ},
	\begin{align*}
	&\Big| \Big( J''\big(\B+\vartheta (B-\B)\big) - J''\big(\B\big) \Big)[B-\B,B-\B] \Big| \le C\, \|B-\B\|_{\VV}^{2+\gamma}
	\end{align*}
	Suppose now that $ \|B-\B\|_{\VV}<\delta$ for some $\delta>0$. Then
	\begin{align*}
	&\Big| \Big( J''\big(\B+\vartheta (B-\B)\big) - J''\big(\B\big) \Big)[B-\B,B-\B] \Big| \\
	&\quad \le \; C\, \delta^{2+\gamma-\alpha} \|B-\B\|_{\VV}^\alpha \;\le\; \frac{\eps}{2} \|B-\B\|_{\VV}^\alpha
	\end{align*}
	if $\delta$ is sufficiently small. In this case $J\big(B\big) \ge J\big(\B\big) + (\eps/4) \|B-\B\|_{\VV}^\alpha$. This especially means that $J(B)>J(\B)$ for all $B\in B_\delta (\B) \cap \BB$ and consequently $\B$ is a strict local minimizer of $J$. \end{proof}

\subsection{Uniqueness of the optimal solution on small time intervals}

We know from Corollary \ref{OSY1} that for any locally optimal solution $\B\in\IBB$ the triple $(f_\B,g_\B,\B)$ is a classical solution of the optimality system
\begin{align}
\label{OS1}
\left\{
\begin{aligned}
&\delt f +v\scdot \delx f - \delx\psi_f\scdot\delv f + (v\stimes B)\scdot\delv f = 0, &&\hspace{-50pt} f\big\vert_{t=0} = \mathring f\\[0.15cm]
&\delt g +v\scdot \delx g - \delx\psi_f\scdot\delv g + (v\stimes B)\scdot\delv g = \PHI_{f,g}, &&\hspace{-50pt} g\big\vert_{t=T} = f(T)-f_d\\[0.15cm]
&B(t,x) = -\frac 1 {4\pi\lambda} \textstyle\iint \frac{1}{|x-y|}\; w\times \delv f(t,y,w)\; g(t,y,w) \dyw \;.\\[0.15cm]
\end{aligned}
\right.
\end{align}
The following theorem states that the solution of this system of equations is unique if the final time $T$ is small compared to $\lambda$. As we will have to adjust $\frac T \lambda$ it is necessary to assume that $0<\lambda\le \lambda_0$ for some constant $\lambda_0>0$. Of course large regularaization parameters $\lambda$ do not make sense in our model, so we will just assume that $\lambda_0 = 1$.

\begin{theorem}
	\label{UOS}
	\hypertarget{HUOS}
	Suppose that $\lambda\in ]0,1]$ and suppose that there exists a classical solution $(f,g,B)$ of the optimality system \eqref{OS1}, i.e., $B\in C\big([0,T];C^1_b(\RR^3;\RR^3)\big)$, $f,g\in C^1([0,T]\times\RR^6)$ with $\supp f(t),\,\supp g(t) \subset B_r(0)$ for some radius $r>0$. Then this solution is unique if the quotient $\tfrac T \lambda$ is sufficiently small.
\end{theorem}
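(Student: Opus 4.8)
The plan is to show uniqueness via a contraction/Gronwall argument on the difference of two solutions. Suppose $(f_1,g_1,B_1)$ and $(f_2,g_2,B_2)$ are two classical solutions of \eqref{OS1} with supports contained in a common ball $B_r(0)$, and set $F:=f_1-f_2$, $G:=g_1-g_2$ and $\MD:=B_1-B_2$. The strategy is to derive $L^2$-estimates for $F(t)$, $G(t)$ and $\MD(t,\cdot)$ in terms of one another, chaining them so that a single Gronwall inequality forces all three differences to vanish provided $T/\lambda$ is small enough. First I would treat the state difference: subtracting the first Vlasov equations for $f_1$ and $f_2$, multiplying by $F$, and integrating by parts (exactly as in the proof of Proposition~\ref{UNIQS}) yields an estimate of the shape
\begin{align*}
\ddt \|F(t)\|_{L^2}^2 \le C\,\|F(t)\|_{L^2}^2 + C\,\|\delv f_2(t)\|_\infty\,\|\MD(t)\|_{L^2(B_r(0))}\,\|F(t)\|_{L^2},
\end{align*}
where the magnetic-field difference enters through the term $(v\times\MD)\cdot\delv f_2$. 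Since $F(0)=0$, Gronwall gives $\|F(t)\|_{L^2}\le C\int_0^t\|\MD(s)\|_{L^2}\,\mathrm ds$.

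Next I would estimate the costate difference $G$. Here the situation is more delicate because the right-hand side contains the nonlocal term $\PHI_{f,g}$, which depends bilinearly on $f$ and $g$, and the final datum $g_i|_{t=T}=f_i(T)-f_d$ depends on $f_i$ as well. Subtracting the two costate equations and testing with $G$, I would use Lemma~\ref{NPOT}(b) to control $\|\delx\psi_{\cdot}\|_{L^2}$ and the estimates \eqref{ESTPHI1}--\eqref{ESTPHI2} to control the differences of the $\PHI$-terms by $\|F\|_{L^2}$ and $\|G\|_{L^2}$; the boundedness of $f_i,g_i$ and their first derivatives (guaranteed by Theorem~\ref{GWS} and Theorem~\ref{ADJS}) provides the required uniform constants. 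Because this is a backward equation with $\|G(T)\|_{L^2}\le\|F(T)\|_{L^2}$, integrating from $T$ backwards yields a bound of the form
\begin{align*}
\|G(t)\|_{L^2}\le C\,\|F(T)\|_{L^2} + C\int_t^T \big(\|F(s)\|_{L^2}+\|G(s)\|_{L^2}\big)\,\mathrm ds .
\end{align*}
Finally, the third equation of \eqref{OS1} expresses $\MD$ explicitly through the Newtonian-type kernel applied to $w\times\delv f\,g$; taking the difference and using the $L^2\!\to\!L^2$ boundedness of that convolution operator (Lemma~\ref{NPOT}) gives a pointwise-in-time bound
\begin{align*}
\|\MD(t)\|_{L^2}\le \frac{C}{\lambda}\big(\|F(t)\|_{L^2}+\|G(t)\|_{L^2}\big),
\end{align*}
where crucially the factor $1/\lambda$ is the only place the regularization parameter appears.

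Combining the three estimates, I would introduce $\Psi(t):=\|F(t)\|_{L^2}+\|G(t)\|_{L^2}$ and substitute the $\MD$-bound into the $F$- and $G$-inequalities to obtain a closed integral inequality for $\Psi$ whose constant is proportional to $T/\lambda$ (the $T$ coming from the time integrals, the $1/\lambda$ from the control reconstruction). Concretely one arrives at something like $\sup_{[0,T]}\Psi \le C\,\tfrac{T}{\lambda}\,\sup_{[0,T]}\Psi$, so that if $T/\lambda$ is small enough to make $C\,T/\lambda<1$ the supremum must be zero, whence $F\equiv G\equiv 0$ and then $\MD\equiv 0$. The main obstacle I anticipate is bookkeeping the coupling carefully: the costate's final value depends on $f_1(T)-f_2(T)=F(T)$, so the backward Gronwall estimate for $G$ cannot be decoupled from $F$, and one must verify that feeding the $F$-bound (itself controlled by $\int\|\MD\|$, hence by $\int\Psi/\lambda$) into the $G$-estimate does not generate a term without a compensating power of $T$. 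Handling this cleanly — ensuring every cross-term carries at least one time-integration factor so the whole system closes with a constant of the form $C\,T/\lambda$ — is the technical heart of the argument; the individual energy estimates themselves are routine adaptations of Proposition~\ref{UNIQS} and the estimates already assembled in Sections~5 and~6.
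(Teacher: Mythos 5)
Your proposal is correct in substance, but it follows a genuinely different route from the paper. The paper argues in the Lagrangian picture with sup-norms: it compares the characteristic flows $Z$ and $\tZ$ of the two solutions, uses the representation of $f$ and of $g$ along characteristics (formula \eqref{EXPLVL}, backward from $t=T$ for the costate), and chains the two resulting estimates \eqref{UNIEF} and \eqref{UNIEG}, namely $\|f-\tf\|_{C([0,T];C_b)} \le C\tfrac T\lambda \exp(C\tfrac T\lambda)\|g-\tg\|_{C([0,T];C_b)}$ and its mirror image, so that the composed factor $C(\tfrac T\lambda)^2\exp(C\tfrac T\lambda)<1$ forces $f=\tf$, then $g=\tg$, and finally $B=\tB$ through the explicit field formula \eqref{UNIEU}. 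You instead work in the Eulerian picture with $L^2$ energy estimates, extending Proposition \ref{UNIQS} to the coupled system; the structural facts you exploit are exactly the same ones the paper uses (the factor $1/\lambda$ enters only through the reconstruction of $B$ from $f$ and $g$, every coupling term carries a time integration, and the final-datum coupling $G(T)=F(T)$ is harmless because $\|F(T)\|_{L^2}$ itself carries a factor $T/\lambda$), and your closed inequality $\sup\Psi\le C\tfrac T\lambda\sup\Psi$ concludes just as well. Your route avoids characteristics and representation formulas entirely, which is a real simplification.

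One technical point must be made explicit for your argument to close, and it is the only place where care is genuinely needed. The differences $\PHI_{f_1,g_1}-\PHI_{f_2,g_2}=\PHI_{F,g_1}+\PHI_{f_2,G}$ and $\delv f_1\, g_1-\delv f_2\, g_2=\delv F\, g_1+\delv f_2\, G$ contain $\delv F$, which is \emph{not} controlled in your $L^2$ framework; applying \eqref{ESTPHI1} directly to $\PHI_{F,g_1}$ gives the useless bound $c\,\|\delv F\|_\infty\|g_1\|_{L^2}$. You must first integrate by parts in $v$ to move the derivative onto the known solution, i.e.\ use $\PHI_{F,g_1}=-\PHI_{g_1,F}$ and $\int v\times\delv F\, g_1 \dv=-\int v\times\delv g_1\, F \dv$, after which \eqref{ESTPHI1} and Lemma \ref{NPOT} yield the bounds by $\|F\|_{L^2}$ and $\|G\|_{L^2}$ that you assert; this is the same integration by parts the paper invokes to obtain \eqref{UNIEU}. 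Also note that your displayed inequality for $\|G(t)\|_{L^2}$ omits the contribution of $(v\times\MD)\cdot\delv g_2$ from the subtracted costate equations (your prose later accounts for it when substituting the $\MD$-bound into both inequalities, but the term should appear), and that the $\MD$-estimate should be taken in $L^2(B_r(0))$ or $L^\infty(B_r(0))$, which suffices since $\MD$ only ever multiplies functions supported in $B_r(0)$. With these repairs your proof is complete and, if anything, more economical than the paper's.
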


\begin{proof}
	Suppose that the triple $(\tf,\tg,\tB)$ is another classical solution that is satisfying the support condition with radius $\tilde r$. Without loss of generality we assume that $r=\tilde r$. Let $C=C(T)\ge 0$ denote some generic constant that may depend on $T$, $\mathring f$, $f_d$, $r$, $\|\chi\|_{C^1_b}$ and the $C([0,T];C^1_b)$-norm of $f$, $\tf$, $g$ and $\tg$. We can assume that $C=C(T)$ is monotonically increasing in $T$. First of all, by integration by parts,
	\begin{align}
	\label{UNIEU}
	&\|B(t)-\tB(t)\|_\infty \le  \frac C \lambda \|g(t)-\tg(t)\|_\infty +  \frac C \lambda \|f(t)-\tf(t)\|_\infty, \quad t\in [0,T].
	\end{align}
	Let now $Z$ and $\tZ$ denote the solutions of the characteristic system of the Vlasov equation to the fields $B$ and $\tB$ satisfying $Z(t,t,z) = z$ and $\tZ(t,t,z)=z$ for any $t\in[0,T]$ and $z\in\RR^6$. Then for any $s,t\in [0,T]$ (where $s\le t$ without loss of generality) and $z\in\RR^6$,\newpage
	\begin{align*}
	&|Z(s,t,z)-\tZ(s,t,z)| \\
	&\quad \le \int\limits_s^t C\; |Z(\tau,t,z)-\tZ(\tau,t,z)| + \tfrac C \lambda\; \|f(\tau)-\tf(\tau)\|_\infty + \tfrac C \lambda\; \|g(\tau)-\tg(\tau)\|_\infty \dtau
	\end{align*}
	and hence
	\begin{align}
	\label{EQ0}
	|Z(s,t,z)-\tZ(s,t,z)| \le C \int\limits_s^t \hspace{-3pt}\tfrac 1 \lambda\, \|f(\tau)-\tf(\tau)\|_\infty + \tfrac 1 \lambda\, \|g(\tau)-\tg(\tau)\|_\infty \dtau
	\end{align}
	by Gronwall's lemma. Consequently
	\begin{align*}
	\|f(t)-\tf(t)\|_\infty &\le C\; \|Z(0,t,\cdot)-\tZ(0,t,\cdot)\|_\infty \\
	&\le C\; \int\limits_0^t \tfrac 1 \lambda\; \|f(\tau)-\tf(\tau)\|_\infty + \tfrac 1 \lambda\; \|g(\tau)-\tg(\tau)\|_\infty \dtau
	\end{align*}
	which yields\vspace{-2mm}
	\begin{align*}
	\|f(t)-\tf(t)\|_\infty \le C\; \tfrac 1 \lambda \exp\left( C\;\tfrac T \lambda \right)\; \int\limits_0^t \|g(\tau)-\tg(\tau)\|_\infty \dtau
	\end{align*}
	and thus
	\begin{align}
	\label{UNIEF}
	\|f-\tf\|_{C([0,T];C_b)} \le C\; \tfrac T \lambda \exp\left( C\;\tfrac T \lambda \right)\; \|g-\tg\|_{C([0,T];C_b)}\,.
	\end{align}
	For $z\in B_r(0)$ and $t\in[0,T]$ we can conclude from \eqref{EXPLVL} that
	\begin{align*}
	&|g(t,z)-\tg(t,z)| \\
	&\quad\le C\; \|Z(T,t,\cdot)-\tZ(T,t,\cdot)\|_\infty + \int\limits_{t}^T \|\PHI_{f,g}(\tau) - \PHI_{\tf,\tg}(\tau)\|_{L^\infty(B_r(0))} \dtau\\
	&\qquad + C\;\int\limits_{t}^T \|\PHI_{\tf,\tg}(\tau)\|_{W^{1,\infty}} \|Z(\tau,t,\cdot) - \tilde Z(\tau,t,\cdot)\|_\infty \dtau\;.
	\end{align*}
	We already know from inequality \eqref{EQ0} that for $t\le\tau\le T$,
	\begin{align*}
	\|Z(\tau,t,\cdot) - \tilde Z(\tau,t,\cdot)\|_\infty  &\le C\; \int\limits_t^\tau \tfrac 1 \lambda\; \|f(\sigma)-\tf(\sigma)\|_\infty + \tfrac 1 \lambda\; \|g(\sigma)-\tg(\sigma)\|_\infty \;\mathrm d \sigma\,.
	\end{align*}
	\newpage
	Also recall that
	\begin{align*}
	&\|\PHI_{f,g}(\tau)\|_{W^{1,\infty}} \le \|\PHI_{f,g}(\tau)\|_\infty + \|\PHI'_{f,g}(\tau)\|_\infty \\
	&\quad \le C\; \|f\|_{C([0,T];C^1_b)}\, \|g\|_{C([0,T];C^1_b)} \le C
	\end{align*}
	for every $\tau\in[0,T]$. Moreover, by \eqref{ESTPHI3},
	\begin{align*}
	&\|\PHI_{f,g}(\tau)-\PHI_{\tf,\tg}(\tau)\|_{L^\infty(B_r(0))} \le C\; \|f(\tau)-\tf(\tau)\|_\infty + C\; \|g(\tau)-\tg(\tau)\|_\infty
	\end{align*}
	for all $\tau\in[0,T]$. This implies that for all $t\in[0,T]$,
	\begin{align*}
	\|g(t)-\tg(t)\|_\infty
	&\le C\; \int\limits_t^T \tfrac 1 \lambda \; \|g(\tau)-\tg(\tau)\|_\infty + \tfrac 1 \lambda  \; \|f(\tau)-\tf(\tau)\|_\infty \dtau
	\end{align*}
	and hence
	\begin{align}
	\label{UNIEG}
	\|g-\tg\|_{C([0,T];C_b)} \le C\; \tfrac T \lambda \exp\left( C\;\tfrac T \lambda \right)\; \|f-\tf\|_{C([0,T];C_b)}
	\end{align}
	by Gronwall's lemma. Inserting \eqref{UNIEG} in \eqref{UNIEF} yields
	\begin{align*}
	\|f-\tf\|_{C([0,T];C_b)} \le C \left(\tfrac T \lambda\right)^2 \exp\left( C\;\tfrac T \lambda \right)\; \|f-\tf\|_{C([0,T];C_b)}\;.
	\end{align*}
	If now $\tfrac T \lambda$ is sufficiently small we have $C \left(\tfrac T \lambda\right)^2 \exp\left( C\;\tfrac T \lambda \right)<1$ and we can conclude that $f=\tf$ on $[0,T]\times\RR^6$. Then obviously $g=\tg$ by \eqref{UNIEG} and $B=\tB$ by \eqref{UNIEU} which means uniqueness of the solution $(f,g,B)$. \end{proof}

If $\B\in\IBB$ is locally optimal, the following uniqueness result holds:

\begin{corollary}
	Suppose that $\lambda\in]0,1]$ and let $\B\in\IBB$ be a locally optimal solution of the optimization problem \eqref{OP1}. Then the tripel $(f_{\B},g_{\B},\B)$ is a classical solution of the optimality system \eqref{OS1} according to Corollary \ref{OSY1}.\pskip
	
	If now $\lambda\in]0,1]$ and $\tfrac T \lambda$ is sufficiently small then $\B$ is the only locally optimal solution of the optimization problem \eqref{OP1} in $\IBB$. \pskip
	
	Suppose that there is a globally optimal solution $B\in\IBB$. Then $B=\B$ is the unique globally optimal solution in $\IBB$. However it is still possible that there are other globally optimal solutions in $\partial\BB$.
\end{corollary}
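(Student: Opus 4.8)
The plan is to obtain all three assertions by combining the two results we have already proved, Corollary~\ref{OSY1} and Theorem~\ref{UOS}, so that the argument reduces to careful bookkeeping rather than new estimates. For the first assertion I would simply quote Corollary~\ref{OSY1}: since $\B\in\IBB$ is locally optimal, that corollary already provides $f_\B,g_\B\in C^1([0,T]\times\RR^6)$ with $\supp f_\B(t)\subset\BR$ and $\supp g_\B(t)\subset B_{R^*}(0)$, and states that $(f_\B,g_\B,\B)$ is a classical solution of the optimality system. The only formal difference between the system of Corollary~\ref{OSY1} and the system \eqref{OS1} used in Theorem~\ref{UOS} is the cut-off factor $\chi$ on the right-hand side of the costate equation. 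Since $\chi\equiv 1$ on $\BRZ\supset\BR$, and the supports of $f_\B$ and of the integrand $w\times\delv f_\B\,g_\B$ defining $\B$ are contained in $\BR$, the cut-off does not affect any quantity that enters the coupling; hence the triple is a classical solution of \eqref{OS1} in precisely the sense used by Theorem~\ref{UOS} (where the representation formula \eqref{EXPLVL} and the estimate \eqref{ESTPHI3} are only applied on the compact support).

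For the uniqueness statement I would take an arbitrary second locally optimal control $\B'\in\IBB$ and apply Corollary~\ref{OSY1} to it as well, so that $(f_{\B'},g_{\B'},\B')$ is again a classical solution of \eqref{OS1} satisfying the regularity and compact-support hypotheses of Theorem~\ref{UOS}. Theorem~\ref{UOS} then asserts that such a classical solution is unique once $T/\lambda$ is small enough, which forces the two triples to coincide; in particular $\B=\B'$, so $\B$ is the only locally optimal solution in $\IBB$. The one point that needs slight care is that the smallness threshold in Theorem~\ref{UOS} is governed by a constant that depends on the $C([0,T];C^1_b)$-norms of the states and costates entering the comparison. Here I would observe that by Theorem~\ref{GWS} and Theorem~\ref{ADJS} every state $f_B$ and costate $g_B$ with $B\in\BB$ obeys an a priori bound depending only on $\mathring f,f_d,T,K,\beta$ and not on the particular control; consequently that constant, and therefore the threshold, can be chosen uniformly over all interior local optima, so that a single requirement ``$T/\lambda$ sufficiently small'' suffices.

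The third assertion follows formally: a globally optimal solution is in particular locally optimal, so if it lies in $\IBB$ the previous paragraph forces it to equal $\B$, giving uniqueness of a globally optimal solution in the open ball. I would close by explaining why boundary minimisers cannot be excluded by this method: the entire chain rests on the Euler--Lagrange identity of Theorem~\ref{NOC1}(c), equivalently the third equation of \eqref{OS1}, which is only available at interior points; on $\partial\BB$ one merely has the variational inequality of Theorem~\ref{NOC1}(b), so a boundary control need not solve the optimality system at all and is beyond the reach of Theorem~\ref{UOS}. The main (and essentially only) obstacle is thus the uniformity of the smallness threshold just discussed, which is resolved by the uniform a priori bounds; every other step is a direct citation of the two quoted results.
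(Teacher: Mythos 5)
Your proposal is correct and follows essentially the same route as the paper: the paper's own proof simply invokes Theorem~\ref{UOS} to conclude that $\B$ is the only locally optimal solution in $\IBB$, recalls Theorem~\ref{EXOC1}, and observes that any globally optimal solution is also locally optimal, exactly as you do. The two details you spell out --- the cut-off factor $\chi$ distinguishing the system \eqref{OS-SEQ0} of Corollary~\ref{OSY1} from \eqref{OS1} (harmless because the coupling only sees $g_\B$ on $\BR$, where it is $\chi$-independent, and the constants in the proof of Theorem~\ref{UOS} already account for $\|\chi\|_{C^1_b}$), and the uniformity of the smallness threshold via the control-independent bounds of Theorems~\ref{GWS} and~\ref{ADJS} --- are glossed over in the paper's one-paragraph proof, and your treatment of them is sound.
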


\begin{proof} If $\lambda\in]0,1]$ and $\tfrac T \lambda$ is sufficiently small then Proposition \ref{UOS} ensures that $\B$ is the only locally optimal solution. Recall that there exists at least one globally optimal solution according to Theorem \ref{EXOC1}. Let us assume that $B\in\IBB$ in one of these globally optimal solutions. As any globally optimal solution is also locally optimal it follows that there is only one globally optimal solution in $\IBB$ and thus $B=\B$. \end{proof}

\footnotesize

\end{document}